\documentclass[reqno,12pt]{amsart}
\usepackage{amsmath,epsfig}
\usepackage{amssymb}
\usepackage[latin1]{inputenc}
\usepackage{lmodern}
\usepackage{supertabular}
\usepackage{caption}
\usepackage{geometry}
\usepackage{ulem}


\newtheorem{theorem}{Theorem}[section]
\newtheorem{lemma}[theorem]{Lemma}

\newtheorem{cor}[theorem] {Corollary}

\theoremstyle{definition}

\newtheorem{remark}[theorem] {Remark}

\renewcommand{\section}{\secdef\sct\sect}
\newcommand{\sct}[2][default]{\refstepcounter{section}
\setcounter{equation}{0}
\vspace{0.5cm}
\centerline{ \large
\scshape \arabic{section}.\ #1}
\vspace{0.3cm}}
\newcommand{\sect}[1]{
\vspace{0.5cm}
\centerline{\large\scshape #1}
\vspace{0.3cm}}

\renewcommand{\subsection}{\secdef \subsct\sbsect}
\newcommand{\subsct}[2][default]{\refstepcounter{subsection}
\nopagebreak
\vspace{0.5\baselineskip}
{\flushleft\bf \arabic{section}.\arabic{subsection}~\bf #1  }
\nopagebreak}
\newcommand{\sbsect}[1]{\vspace{0.1cm}\noindent
{\bf #1}\vspace{0.1cm}}

\newcommand{\Ll}{\mathcal{L}}

\newcommand{\F}{{\mathcal {F}}}

\newcommand{\Oo}{{\mathcal {O}}}


\newcommand{\R}     {\mathbb{R}}

\newcommand{\N}     {\mathbb{N}}

\newcommand{\E}     {\mathbb{E}}

\newcommand{\V}     {\mathbb{V}}

\newcommand{\cov}{\text{Cov}}

\newcommand{\Mm}{\mathcal{M}}

\def\1{{\mathchoice {1\mskip-4mu\mathrm l}
                    {1\mskip-4mu\mathrm l}
                    {1\mskip-4.5mu\mathrm l} {1\mskip-5mu\mathrm l}}}

\newcommand{\abs}[1]{\ensuremath{\left\vert#1\right\vert}}
\setlength{\textheight}{25cm}
\setlength{\textwidth}{15cm}
\setlength{\topmargin}{0in}
\setlength{\headheight}{0.12in}
\setlength{\headsep}{.40in}
\setlength{\parindent}{1pc}
\setlength{\oddsidemargin}{-0.1in}
\setlength{\evensidemargin}{-0.1in}

\marginparwidth 48pt
\marginparsep 10pt
\oddsidemargin-5mm
\topmargin -18pt
\headheight 12pt
\headsep 25pt
\footskip 30pt
\textheight 650pt
\textwidth 170mm
\columnsep 10pt
\columnseprule 0pt

\sloppy
\parskip 0.8ex plus0.3ex minus0.2ex
\parindent1.0em

\begin{document}
\renewcommand{\proofname}{Proof}
\title[Rates of convergence in the Blume-Emery-Griffiths model]{\large
Phase transitions \\\vspace{2mm} for rates of convergence \\\vspace{3mm} in the Blume-Emery-Griffiths model}

\author[Peter Eichelsbacher and Bastian Martschink]{}
\maketitle
\thispagestyle{empty}
\vspace{0.2cm}

\centerline{\sc Peter Eichelsbacher\footnote{Ruhr-Universit\"at Bochum, Fakult\"at f\"ur Mathematik,
NA 3/66, D-44780 Bochum, Germany, {\tt peter.eichelsbacher@rub.de}} and Bastian Martschink\footnote{Hochschule Bonn-Rhein Sieg, Fachbereich 03,
B 295, D-53757 Sankt Augustin, Germany, {\tt bastian.martschink@h-brs.de} \\The authors have been supported by Deutsche Forschungsgemeinschaft via SFB/TR 12.}
}

\vspace{2 cm}


\begin{quote}
{\small {\bf Abstract:} 
We derive rates of convergence for limit theorems that reveal the intricate structure of the phase transitions in a mean-field version of the Blume-Emery-Griffith model. The theorems consist of scaling limits for the total spin. The model depends on the inverse temperature $\beta$ and the interaction strength $K$.
The rates of convergence results are obtained as $(\beta,K)$ converges along appropriate sequences $(\beta_n,K_n)$ to points belonging to various subsets
of the phase diagram which include a curve of second-order points and a tricritical point. We apply Stein's method for normal and non-normal
approximation avoiding the use of transforms and supplying bounds, such as those of Berry-Esseen quality, on approximation error. We observe an additional
phase transition phenomenon in the sense that depending on how fast $K_n$ and $\beta_n$ are converging to points in various subsets of the phase diagram,
different rates of convergences to one and the same limiting distribution occur.}
\end{quote}


\bigskip\noindent
{\bf AMS 2000 Subject Classification:} Primary 60F05; Secondary 82B20, 82B26.

\medskip\noindent
{\bf Key words:} Stein's method, exchangeable pairs, Blume-Emery-Griffith model, second-order phase transition, first-order phase transition, tricritical point,
Blume-Capel model

\setcounter{section}{0}

\bigskip
\bigskip
\bigskip

\section{Introduction}
\subsection{The Blume-Emery-Griffiths Model}\label{subsubsec:IntroductionBME}

In 1971 Blume, Emery and Griffiths \cite{Blume/Emery/Griffiths:1971} introduced a mean field version of an important lattice spin model due to Blume and Capel. We refer to the mean field version as the BEG model. The BEG model is equivalent to the Blume-Capel model (see \cite{Blume:1966} and \cite{Capel:1966}, \cite{Capel:1967a} and \cite{Capel:1967b}) on the complete graph on $n$ vertices. One of the most outstanding features of the model is that it is one of the few mean-field models that exhibits a continuous second-order phase transition, a discontinuous first-order phase transition and thus has a tricritical point, which separates the curves of the points belonging to the phase transitions. As a consequence this model is used to study many diverse systems, obviously including the one Blume, Emery and Griffiths devised it for.\\
They showed that the model can be used to determine the phase diagram of He$^3$-He$^4$ mixtures using a simplification. 
In order to analyse this physical system the BEG model was introduced, which can also be used to explain the behavior of other physical systems such as microemulsions, semiconductor alloys or solid-liquid-gas systems, to name only a few. A variety of these applications of the BEG model are discussed in \cite[Section 1]{Ellis/Otto/Touchette:2005}. Especially because the model keeps the intricate phase transition structure 
it continues to be of interest in statistical mechanics.

Next we will give a mathematical definition of the BEG model and state some of the results known. 

Let $\beta >0$ and $K>0$. As a configuration space for the model we will take all the sequences $(\omega_1,\ldots,\omega_n)$ in $\{-1,0,1\}^n$. $\omega_i$ denotes the spin on site $i$ of the complete graph on $n$ vertices. The Hamiltonian for the BEG model is defined by
\begin{align}\label{HBEG}
H_{n,K}(w)=\sum\limits_{j=1}^n\omega_j^2-\frac{K}{n}\left(\sum\limits_{j=1}^n\omega_j\right)^2
\end{align}
for each $\omega\in\{-1,0,1\}^n$. $K>0$ represents the interaction strength of the model.  Given this Hamiltonian the probability of observing a subset $A$ of $\{-1,0,1\}^n$ equals
\begin{eqnarray}
P_{\beta,K,n}(A)=\frac{1}{Z_{ \beta,K,n}}\int_A\exp\bigl(-\beta H_{n,K}\bigr)\text{d}P_n\label{PBEG}.
\end{eqnarray}
$Z_{\beta,K,n}$ denotes the normalisation constant and $P_n$ is the product measure on $\{-1,0,1\}^n$ having identical one-dimensional marginals
$
\rho=\frac{1}{3}(\delta_{-1}+\delta_0+\delta_{+1})$.
We will be interested in the behavior of the spin per site
\begin{align}\label{totalspin}
\frac{1}{n}S_n(\omega):=\frac{1}{n}\sum\limits_{j=1}^n\omega_j
\end{align}
under the distribution $P_{ \beta,K,n}$ as $n\rightarrow \infty$. $S_n$ is called the {\it total spin}. The BEG model shares the feature - with for example the Curie-Weiss model - that the interaction terms in the Hamiltonian can be written as a quadratic function of the total spin. For this purpose we absorb the first non-interacting part of the Hamiltonian \eqref{HBEG} into the product measure. It is important to notice that in that case, in contrast to the Curie-Weiss model, the BEG model has a much more complicated product measure $P_{n,\beta}$ on $\{-1,0,1\}^n$ because of its $\beta$-dependence. The one-dimensional marginals of $P_{n,\beta}$ are given by
\begin{align*}
\rho_{\beta}(\text{d}\omega_j)=\frac{1}{Z(\beta)}\cdot\exp(-\beta\omega_j^2)\rho(\text{d}w_j),
\end{align*}
where $Z(\beta)$ is equal to 
$
\int\exp(-\beta\omega_j^2)\rho(\text{d}\omega_j)=\frac{1+2e^{-\beta}}{3}$.
Thus one has (see \cite[Section 3.1]{Ellis/Otto/Touchette:2005}) that the probability of observing a configuration $\omega$ equals
\begin{eqnarray}
P_{\beta,K,n}(\text{d}\omega)=\frac{1}{\tilde Z_{ \beta,K,n}}\cdot\exp\left[n\beta K\left(\frac{S_n(\omega)}{n}\right)^2\right]P_{n,\beta}(\text{d}\omega)\label{PBEG2}
\end{eqnarray}
with normalization constant $\tilde Z_{ \beta,K,n} = \frac{Z_{\beta, K, n}}{Z(\beta)^n}$. Hence one has reduced the BEG model to a Curie-Weiss-type model.

We appeal to the theory of large deviations to define the set of (canonical) {\it equilibrium macrostates}. In order to state a large deviations principle (LDP) (for a definition see \cite[Section 1.2]{Dembo/Zeitouni:LargeDeviations}) for the spin per site for the BEG model we need to define the cumulant generating function of $\rho_{\beta}$, which is given by
\begin{equation}
c_{\beta}(t)=\log\int\exp(t\omega_1)\rho_{\beta}(\text{d}\omega_1)
=\log\left(\frac{1+e^{-\beta}(e^t+e^{-t})}{1+2e^{-\beta}}\right).\label{CBEG}
\end{equation}
Cram\'er's theorem (\cite[Theorem 2.2.3]{Dembo/Zeitouni:LargeDeviations}) states that, with respect to the product measure $P_{n, \beta}$, the sequence $(S_n/n)_n$
satisfies the LDP on $[-1,1]$ with speed $n$ and rate function 
\begin{align}\label{LegFen}
J_{\beta}(z):=\sup\limits_{t\in\R}\{tz-c_{\beta}(t)\},
\end{align} 
which is the Legrende-Fenchel transform of $c_{\beta}$. Having the LDP for $(S_n/n)_n$ with respect to $P_{n,\beta}$, the following theorem, taken from \cite[Theorem 2.4]{EllisHaven:2000}, states the LDP for $(S_n/n)_n$ for $P_{\beta,K,n}$.

\begin{theorem}
For all $\beta>0$ and $K>0$ the following conclusion holds: With respect to $P_{\beta,K,n}$, $(S_n/n)_n$ satisfies the LDP on $[-1,1]$ with speed $n$ and rate function
\begin{align*}
I_{\beta,K}(z)=J_{\beta}(z)-\beta Kz^2-\inf\limits_{y\in\R}\{J_{\beta}(y)-\beta Ky^2\},
\end{align*}
with $J_{\beta}(z)$ taken from \eqref{LegFen}.
\end{theorem}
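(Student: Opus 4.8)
The plan is to derive this statement from the LDP already in hand --- Cram\'er's theorem for $(S_n/n)$ under the product measure $P_{n,\beta}$, with good rate function $J_\beta$ --- by the standard exponential-tilting argument built on Varadhan's integral lemma. The link is the representation \eqref{PBEG2}: since the density there depends on $\omega$ only through $S_n(\omega)$, it says that the law of $(S_n/n)$ under $P_{\beta,K,n}$ has Radon--Nikodym density $\tilde Z_{\beta,K,n}^{-1}\exp\!\bigl[n\,\beta K\,(S_n/n)^2\bigr]$ with respect to its law under $P_{n,\beta}$. The perturbation $z\mapsto\beta K z^2$ is continuous and bounded on the compact interval $[-1,1]$ in which $S_n/n$ takes its values, $J_\beta$ is a good rate function there (lower semicontinuous as a Legendre--Fenchel transform, with level sets that are closed subsets of a compact set), and the tail/integrability hypothesis in Varadhan's lemma is vacuous on a compact state space with a bounded perturbation; so the lemma and its closed-set and open-set refinements apply verbatim.

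First I would identify the normalization. Varadhan's lemma applied to $z\mapsto\beta K z^2$ yields
\[
\lim_{n\to\infty}\frac1n\log\tilde Z_{\beta,K,n}
=\sup_{z\in[-1,1]}\bigl\{\beta K z^2-J_\beta(z)\bigr\}
=-\inf_{y\in\R}\bigl\{J_\beta(y)-\beta K y^2\bigr\}=:-m_{\beta,K},
\]
where the middle equality uses $J_\beta\equiv+\infty$ off $[-1,1]$; $m_{\beta,K}$ is exactly the constant subtracted in the statement.

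Next I would establish the two large-deviation bounds. Starting from
\[
P_{\beta,K,n}(S_n/n\in A)=\tilde Z_{\beta,K,n}^{-1}\,\E_{P_{n,\beta}}\!\bigl[e^{\,n\beta K(S_n/n)^2}\,\1_{\{S_n/n\in A\}}\bigr],
\]
the closed-set form of Varadhan's estimate gives, for closed $F\subseteq[-1,1]$, $\limsup_n\frac1n\log\E_{P_{n,\beta}}[e^{n\beta K(S_n/n)^2}\1_{\{S_n/n\in F\}}]\le\sup_{z\in F}\{\beta K z^2-J_\beta(z)\}$, and the open-set form gives the matching lower bound with $\liminf$ and $\sup_{z\in G}$ for open $G$. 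Dividing by $\tilde Z_{\beta,K,n}$, inserting the limit from the previous step, and using $\beta K z^2-J_\beta(z)+m_{\beta,K}=-I_{\beta,K}(z)$, one obtains
\[
\limsup_{n\to\infty}\tfrac1n\log P_{\beta,K,n}(S_n/n\in F)\le-\inf_{z\in F}I_{\beta,K}(z),\qquad
\liminf_{n\to\infty}\tfrac1n\log P_{\beta,K,n}(S_n/n\in G)\ge-\inf_{z\in G}I_{\beta,K}(z).
\]
It then remains to observe that $I_{\beta,K}=J_\beta-\beta K(\cdot)^2-m_{\beta,K}$ is nonnegative (its infimum over $[-1,1]$ is $m_{\beta,K}-m_{\beta,K}=0$), lower semicontinuous (the sum of the lower semicontinuous $J_\beta$ and the continuous $-\beta K(\cdot)^2$), and has compact level sets on the compact interval $[-1,1]$ --- hence a good rate function, which completes the proof.

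I do not expect a genuine obstacle here: this is the textbook tilted-LDP argument one reads off directly from Varadhan's lemma (see the relevant consequences in \cite{Dembo/Zeitouni:LargeDeviations}); alternatively one could simply invoke the general principle that an LDP with good rate $I$ under $\mu_n$ together with a bounded continuous $F$ yields an LDP with rate $I-F-\inf(I-F)$ for the exponential $F$-tilts of $\mu_n$, and specialize it. The only mildly delicate point is the $\eps$-bookkeeping in the closed- and open-set bounds --- replacing $\beta K(S_n/n)^2$ by a constant on a small neighborhood before applying the LDP for $P_{n,\beta}$ --- which is routine given the uniform continuity of $z\mapsto\beta K z^2$ on $[-1,1]$ and a finite-cover argument on the compact set $F$.
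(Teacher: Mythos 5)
Your argument is correct: the representation \eqref{PBEG2} exhibits the law of $S_n/n$ under $P_{\beta,K,n}$ as an exponential tilt, by the bounded continuous function $z\mapsto\beta K z^2$ on $[-1,1]$, of its law under $P_{n,\beta}$, and combining Cram\'er's theorem with Varadhan's lemma (upper and lower estimates) plus the normalization limit for $\tilde Z_{\beta,K,n}$ gives exactly the stated rate function. The paper itself offers no proof but cites \cite[Theorem 2.4]{EllisHaven:2000}, and that reference establishes the result by essentially this same tilting argument for Hamiltonians that are bounded continuous functions of the macroscopic variable, so your route coincides with the source's.
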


As a consequence only the points $z\in[-1,1]$ satisfying $I_{\beta,K}(z)=0$ do not have an exponentially small probability of being observed. These points form the set of the so-called {\it equilibrium macrostates}, which is accordingly defined as
\begin{align}\label{MacroBEG}
\Mm_{\beta,K}=\bigl\{z\in[-1,1]: I_{\beta,K}(z)=0\bigr\}.
\end{align}
In \cite[Theorem 3.6, Theorem 3.8]{Ellis/Otto/Touchette:2005} it is proven that there exists a critical inverse temperature $\beta_c=\log 4$
and, for $\beta >0$, there exists a critical value $K_c(\beta)>0$ characterising the phase-transition structure of the model: 
for $\beta>0$ and $0<K<K_c(\beta)$, $\Mm_{\beta,K}$ consists of the unique pure phase 0, \cite[Theorem 3.6(a) and 3.8(a)]{Ellis/Otto/Touchette:2005}. For $\beta>0$ and $K > K_c(\beta)$, $\Mm_{\beta,K}$ consists of two distinct, nonzero phases. For $0 < \beta \leq \beta_c$, as $K$ increases through $K_c(\beta)$, $\Mm_{\beta,K}$ undergoes a continuous bifurcation, which
corresponds to a second-order phase transition, \cite[Theorem 3.6(b)(c)]{Ellis/Otto/Touchette:2005}. Here we have
\begin{equation} \label{KcBEG}
K_c(\beta) = \frac{1}{2 \beta c_{\beta}''(0)} = \frac{e^{\beta}+2}{4\beta}.
\end{equation}
For $\beta > \beta_c$, as $K$ increases through $K_c(\beta)$, $\Mm_{\beta,K}$ undergoes a discontinuous bifurcation, which corresponds to a first-order phase transition, \cite[Theorem 3.8(c)(d)]{Ellis/Otto/Touchette:2005}. The point $(\beta_c, K_c(\beta_c)) = (\log 4, 3/ [2 \log 4])$ in the positive quadrant of the $\beta$-$K$ plane separates the second-oder phase transition from the first-order transition and is called the {\it tricritical point}.

Based on the points that correspond to the transitions we define two different sets that will influence the form of our limiting density. The first set contains the {\it single-phase region} and it is defined by
\begin{align*}
A=\bigl\{(\beta,K)\in\R^2:\,0<\beta\leq\beta_c,\,0<K<K_c(\beta)\bigr\}.
\end{align*}
The curve containing the {\it second-order points} is given by
\begin{align*}
B=\bigl\{(\beta,K)\in\R^2:\,0<\beta<\beta_c,\,K=K_c(\beta)\bigr\}.
\end{align*}
\begin{center}
\includegraphics[scale=0.37]{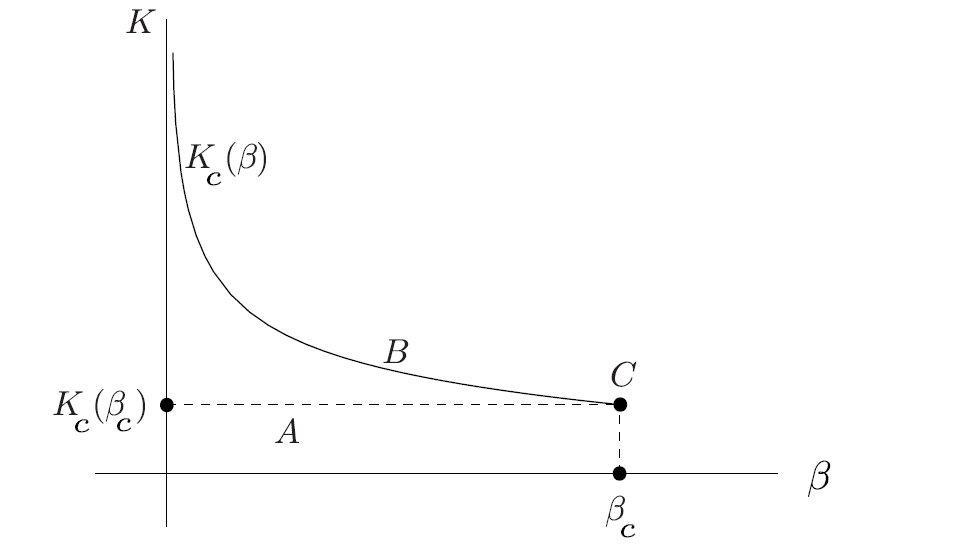}\\Figure 1.1: The sets A, B and C
\end{center}
Furthermore we consider the singleton set 
\begin{align*}
C:=(\beta_c,K_c(\beta_c))=(\log 4, 3/[2\log 4]),
\end{align*}
which separates the first- and second-order phase transition. See also \cite[Section 1]{CosteniucEllis:2007} for a nice summary. 
Figure 1.1 illustrates the sets.

When studying the {\it law of large numbers} for the BEG model this complex phase-transition structure proves to be a determining factor. In \cite[(2.1)]{CosteniucEllis:2007} it was shown that for $\beta>0$ and $0<K<K_c(\beta)$ the law of large numbers holds with
$
P_{\beta,K,n}\bigl(S_n/n\in \text{d}x\bigr)\Rightarrow\delta_0$,
as $n\rightarrow\infty$. Hence, for sufficiently small interaction strength $K>0$ an analogue of the classical law of large numbers can be proven. If the interaction strength exceeds the critical value for $K$ the law of large numbers breaks down. For $\beta>0$ and $K>K_c(\beta)$ in 
\cite[Theorem 3.6, 3.8]{Ellis/Otto/Touchette:2005} the existence of $z(\beta,K)>0$ was proven such that the following limit holds true:
\begin{align*}
P_{\beta,K,n}\bigl(S_n/n\in \text{d}x\bigr)\Rightarrow\frac{1}{2}\bigl(\delta_{z(\beta,K)}+\delta_{-z(\beta,K)}\bigr),
\end{align*}
see \cite[(2.2)]{CosteniucEllis:2007}.
Because of the intricate phase transition structure there are also two limits for $K=K_c(\beta)$. Whereas for $0<\beta\leq\beta_c$ the law of large numbers holds, $P_{\beta,K_c(\beta),n}\bigl(S_n/n\in \text{d}x\bigr)\Rightarrow\delta_0$,
for $\beta>\beta_c$ the limit is expressed by a measure supported at three points corresponding to the macrostates in $\Mm_{\beta,K}$:
\begin{align*}
P_{\beta,K_c(\beta),n}\bigl(S_n/n\in \text{d}x\bigr)\Rightarrow\lambda_0\delta_0+\lambda_1\bigl(\delta_{z(\beta,K_c(\beta))}+\delta_{-z(\beta,K_c(\beta))}\bigr),
\end{align*}
where $\lambda_0$ and $\lambda_1$ are positive numbers satisfying $\lambda_0+2\lambda_1=1$ (for an explicit display see \cite[(4.4)]{CosteniucEllis:2007}). These first hints of the intricacy of the phase-transition structure can also be seen for the limit theorems stated in Section \ref{subsec:Statement of Results BME}. 

In Section \ref{subsec:Statement of Results BME} we will obtain limit theorems and {\it rates of convergence} for the rescaled total spin $S_n/n^{1-\gamma}$ for appropriate choices of $\gamma\in(0,1/2]$. In \cite{CosteniucEllis:2007} 18 scaling limits and 18 moderate deviation principles for the total spin
$S_n$ were obtained as $(\beta,K)$ converges along appropriate sequences $(\beta_n,K_n)$ to points belonging to the three separate classes: (1) the tricritical point $C$, (2) the curve $B$ of second-order points, and (3) the single-phase region $A$ lying under the curve.  We obtain the 18 different scaling limits by
an alternative proof (Stein's method) and present rates of convergence in all 18 cases at the same time. Furthermore we observe that the complex structure of the phase transitions in the BEG model provides an additional insight, presenting that in 15 of 18 cases the rate of convergence differs within the same case.
A fixed case - out of the 18 - is characterised by a fixed limiting distribution. We will observe that for any of the 15 different limiting densities the rate of convergence
depends on the choice of the value of $\gamma$ and/or the choices of two more parameters $\Delta_1$ and $\Delta_2$ given as follows:
We consider sequences $(\beta_n,K_n)$ converging to $(\beta,K)$ taken from $A$, $B$ or $C$ along the sequences
\begin{align}\label{BetanBEG}
\beta_n&=\log\left(e^{\beta_c}-\frac{b}{n^{\Delta_1}}\right),\\\label{KnBEG}
K_n&=K_c(\beta_n)-\frac{k}{n^{\Delta_2}},
\end{align}
where $\Delta_1>0$, $\Delta_2>0$, $b\not= 0$, $k\neq 0$ and $K_c(\beta)$ defined in \eqref{KcBEG} for $\beta >0$. If $(\beta,K)$ is taken from a set $A$, $B$ or $C$ the sequence $(\beta_n,K_n)$ will determine the value of $\gamma$, since the sequence establishes which set influences the convergence towards $(\beta,K)$. Depending on the sign of $b$ and $k$ the sequences converge from regions having a different physical behavior. 
The mathematical explanation for the choices \eqref{BetanBEG} and \eqref{KnBEG} will be clear later (the sequences are chosen so that certain terms in a Taylor expansion have appropriate behaviour), whereas the physical significance is not obvious.

The three seeds from which the present paper grew are references \cite{Ellis/Otto/Touchette:2005}, \cite{CosteniucEllis:2007} and \cite{Eichelsbacher/Loewe:2010}. In the first paper the phase transition structure of the BEG model is analysed. In the second paper limit theorems in the BEG model are proven and in the third paper, rates of convergence are obtained for limit theorems for the Curie-Weiss model when the inverse temperature converges to the critical inverse temperature in the model along an appropriate sequence $\beta_n$. These results generalise the limit theorems obtained in \cite{Ellis/Newman:1978}
and \cite{Ellis/Newman/Rosen:1980}. The third paper developed Stein's method for exchangeable pairs for distributional approximations including the Gaussian distributions as well as non-Gaussian limit distributions and obtained convergence rates for the Curie-Weiss model
(see also \cite{Chatterjee/Shao:2011}). Note that the fact that limit results are obtained as $\beta$ converges along appropriate sequences $\beta_n$ is shared
with a number of mean field models, including the Curie-Weiss models (\cite{Eichelsbacher/Loewe:2004b}, \cite{Eichelsbacher/Loewe:2010}) and the
Hopfield model of spin glasses and neutral networks (\cite{Gentz/Loewe:1999}, \cite{Eichelsbacher/Loewe:2004}).

\subsection{The Function $G_{\beta,K}$ and its Properties}\label{subsec: GFunkBEG}

From now on we denote for a function $f:\R\rightarrow\R$ the $i$-th derivative by $f^{(i)}$. A crucial element for Stein's method is the function
\begin{align}\label{GBEG}
G_{\beta,K}(x)=\beta Kx^2-c_{\beta}(2\beta Kx)
\end{align}
for $x\in\R$ and its minima. $c_{\beta}$ denotes the cumulant generating function of $\rho_{\beta}$ given in \eqref{CBEG}. The function $G_{\beta,K}$ plays a central role in nearly every aspect of the analysis of the BEG model, since it gives an alternative characterisation of the set of equilibrium macro states ${\mathcal M}_{\beta,K}$.  Apart from being helpful while developing Stein's method in the sequel, its usefulness is also certain for the study of, for example, the phase transitions, the LDP or moderate deviations for the total spin per site, see \cite{CosteniucEllis:2007}. The fact that a wide variety of phenomena can be obtained via properties of a single function is an appealing feature which is shared with a number of other mean-field models including the Curie-Weiss model,
the Curie-Weiss-Potts model \cite{Ellis/Wang:1990} and the Hopfield model \cite{Figotin/Pastur:1977}.
 The next Lemma, proven in \cite[Proposition 3.4]{Ellis/Otto/Touchette:2005}, draws a connection between the equilibrium macrostates defined in \eqref{MacroBEG} and the minima of the function $G_{\beta,K}$.
 \begin{lemma} \label{aquMG}
 For each $x\in\R$ we define $G_{\beta,K}(x)$ as in \eqref{GBEG}. Then for each $\beta>0$ and $K>0$,
 \begin{align*}
 \min\limits_{\abs{x}\leq 1}\bigl\{J_{\beta}(x)-\beta Kx^2\bigr\}=\min\limits_{x\in\R}\bigl\{G_{\beta,K}(x)\bigr\},
 \end{align*}
 with $J_{\beta}(x)$ defined in \eqref{LegFen}. Additionally the global minimum points of $J_{\beta}(x)-\beta Kx^2$ coincide with the global minimum points of $G_{\beta,K}$ and thus
 \begin{align*}
 \Mm_{\beta,K}=\bigl\{x\in\R:\,x\text{ minimizes }G_{\beta,K}(x)\bigr\}.
 \end{align*}
 \end{lemma}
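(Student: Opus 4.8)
The plan is to move between the two variational problems by Legendre--Fenchel duality. Recall first that $c_\beta$ is finite, real-analytic and strictly convex on $\R$ (it is the logarithm of a positive analytic function, and the tilted measures $\propto e^{t\omega}\rho_\beta(\d\omega)$ charge all three points $-1,0,1$, hence are never degenerate), and that $J_\beta=c_\beta^\ast$ is convex, lower semicontinuous, equal to $+\infty$ off $[-1,1]$ and finite on $[-1,1]$; by Fenchel--Moreau the biconjugate returns $J_\beta^\ast=c_\beta$, i.e.\ $\sup_{x\in\R}\{tx-J_\beta(x)\}=c_\beta(t)$ for every $t$. Moreover $c_\beta'$ is a strictly increasing bijection of $\R$ onto $(-1,1)$ whose inverse is $J_\beta'$ on $(-1,1)$, and $J_\beta'(x)\to\pm\infty$ as $x\to\pm1$.

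First I would prove the equality of the minimal values. Using the elementary identity $-\beta Kx^2=\inf_{y\in\R}\{\beta Ky^2-2\beta Kxy\}$ (the minimum being attained at $y=x$), the left-hand side turns into a double infimum; interchanging the two infima and using, for each fixed $y$, that $\inf_{x}\{J_\beta(x)-2\beta Kyx\}=-\sup_x\{2\beta Kyx-J_\beta(x)\}=-c_\beta(2\beta Ky)$ by the duality just recalled, one gets
\[
\min_{|x|\le 1}\{J_\beta(x)-\beta Kx^2\}=\inf_{y\in\R}\{\beta Ky^2-c_\beta(2\beta Ky)\}=\inf_{y\in\R}G_{\beta,K}(y).
\]
Since $\rho_\beta$ has support $\{-1,0,1\}$, one has $c_\beta(s)=|s|+O(1)$ as $|s|\to\infty$, so $G_{\beta,K}$ is coercive and the last infimum is a minimum.

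Then I would identify the sets of minimizers. Write $f(x)=J_\beta(x)-\beta Kx^2$ on $[-1,1]$ and $g=G_{\beta,K}$ on $\R$. Every critical point of $g$ lies in $(-1,1)$, because $g'(x)=2\beta K\bigl(x-c_\beta'(2\beta Kx)\bigr)$ and $c_\beta'$ takes values in $(-1,1)$; and $f$ has no minimizer at $\pm1$, because near those points $f'(x)=J_\beta'(x)-2\beta Kx\to\pm\infty$, so $f$ is monotone there and its minimum is attained at an interior critical point. For $x_0\in(-1,1)$, since $J_\beta'$ and $c_\beta'$ are mutually inverse, $f'(x_0)=0\iff J_\beta'(x_0)=2\beta Kx_0\iff x_0=c_\beta'(2\beta Kx_0)\iff g'(x_0)=0$, so $f$ and $g$ have exactly the same critical points; at such an $x_0$ the supremum defining $J_\beta(x_0)$ is attained at $t=2\beta Kx_0$, whence $J_\beta(x_0)=2\beta Kx_0^2-c_\beta(2\beta Kx_0)$ and therefore $f(x_0)=\beta Kx_0^2-c_\beta(2\beta Kx_0)=g(x_0)$. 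Hence the minimizers of $f$ are precisely the common critical points on which $f$ (equivalently $g$) equals the common minimal value, and symmetrically for $g$; since $\Mm_{\beta,K}=\{z:I_{\beta,K}(z)=0\}=\{z:f(z)=\min f\}$, this gives $\Mm_{\beta,K}=\{x\in\R:\ x\text{ minimizes }G_{\beta,K}\}$.

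The only genuinely delicate points are the boundary bookkeeping — ruling out minimizers of $f$ at $\{-1,1\}$, handled by the blow-up of $J_\beta'$ there — and the attainment of $\inf_{\R}G_{\beta,K}$, handled by the linear growth of $c_\beta$, which in turn comes from $\rho_\beta$ being supported on $\{-1,0,1\}$; the remaining duality computations are routine.
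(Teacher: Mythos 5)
Your argument is correct. Note that the paper itself does not prove this lemma at all: it is quoted verbatim from the reference [Ellis/Otto/Touchette, Proposition~3.4], so there is no internal proof to compare against; your write-up supplies a self-contained justification along the standard convex-duality lines. The two key steps are both sound: the value identity follows from writing $-\beta Kx^2=\inf_y\{\beta Ky^2-2\beta Kxy\}$, swapping infima, and invoking $J_\beta^\ast=c_\beta$ (Fenchel--Moreau, using that $J_\beta=+\infty$ off $[-1,1]$ so the constrained and unconstrained infima agree), with attainment of $\inf_\R G_{\beta,K}$ from the linear growth $c_\beta(s)=|s|+O(1)$; and the identification of minimizers follows from $J_\beta'=(c_\beta')^{-1}$ on $(-1,1)$, which makes the interior critical points of $f(x)=J_\beta(x)-\beta Kx^2$ and of $G_{\beta,K}$ coincide and forces $f=G_{\beta,K}$ at such points because the supremum defining $J_\beta(x_0)$ is attained at $t=2\beta Kx_0$. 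The only point you pass over quickly is that excluding minimizers of $f$ at $\pm1$ uses not just $J_\beta'(x)\to\pm\infty$ but also that $J_\beta$ (hence $f$) is continuous at $\pm1$ from inside; this is immediate from convexity together with finiteness of $J_\beta(\pm1)$ and lower semicontinuity, so it is a bookkeeping remark rather than a gap.
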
 

With the help of Lemma \ref{aquMG} the structure of these minima was discussed and proven in \cite[Theorem 3.6, 3.8]{Ellis/Otto/Touchette:2005}.
For $\beta>0$ and $0<K<K_c(\beta)$, $\Mm_{\beta,K}=\{0\}$, for $0<\beta\leq\beta_c$ and $K=K_c(\beta)$, $\Mm_{\beta,K}=\{0\}$.
For $0<\beta<\beta_c$ and $K>K_c(\beta)$, there exists $z(\beta,K)$ such that $\Mm_{\beta,K}=\{\pm z(\beta,K) \}$.
For $\beta>\beta_c$ and $K=K_c(\beta)$, there exists $z(\beta,K_c(\beta))$ such that $\Mm_{\beta,K}=\{0,\pm z(\beta,K_c(\beta)) \}$.
Finally for $\beta>\beta_c$ and $K>K_c(\beta)$, there exists $z(\beta,K_c(\beta))$ such that $\Mm_{\beta,K}=\{\pm z(\beta,K_c(\beta)) \}$.

A crucial element for the analysis of the model is the Taylor expansion of \eqref{GBEG}. For general $(\beta,K)$ we have, since $G_{\beta,K}$ is real analytic, that for the global minimum point $0$
\begin{align*}
G_{\beta,K}(x)=G_{\beta,K}(0)+\frac{G_{\beta,K}^{(2r)}(0)}{(2r)!}x^{2r}+\Oo\left(x^{2r+1}\right)\quad \text{ as } x\rightarrow 0,
\end{align*}
since $G^{(1)}_{\beta,K}(0)=0$. Here, $r$ denotes the {\it type} of the global minimum point. In \cite[Theorem 4.2]{CosteniucEllis:2007} the types were determined in the following theorem.
\begin{theorem}\label{type}
For all $(\beta,K)\in A\cup B\cup C$, $\Mm_{\beta,K}=\{0\}$.
\begin{enumerate}
\item For all $(\beta,K)\in A$, the global minimum point $0$ has type $r=1$.
\item For all $(\beta,K_c(\beta))\in B$, the global minimum point $0$ has type $r=2$.
\item For all $ C=(\beta_c,K_c(\beta_c))$, the global minimum point $0$ has type $r=3$.
\end{enumerate}
\end{theorem}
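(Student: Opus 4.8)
The plan is to read off the type of the minimum $0$ directly from the first few Taylor coefficients of $G_{\beta,K}$ at the origin. First, observe that by \eqref{CBEG} the cumulant generating function $c_\beta$ is an even function of $t$, so $x\mapsto G_{\beta,K}(x)=\beta Kx^{2}-c_\beta(2\beta Kx)$ is itself even and all odd-order derivatives of $G_{\beta,K}$ vanish at $0$. Since $\Mm_{\beta,K}=\{0\}$ throughout $A\cup B\cup C$ (this follows from Lemma~\ref{aquMG} together with \cite[Theorem 3.6, 3.8]{Ellis/Otto/Touchette:2005}, as recalled just above), the point $0$ is a global minimum of $G_{\beta,K}$; hence its type $r$ is the least integer with $G_{\beta,K}^{(2r)}(0)>0$, all lower even derivatives being zero. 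Thus it suffices to compute $G_{\beta,K}^{(2)}(0)$, $G_{\beta,K}^{(4)}(0)$ and $G_{\beta,K}^{(6)}(0)$ and to determine their signs on the three sets.

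Write $\alpha=2\beta K$. Differentiating \eqref{GBEG} via the chain rule gives $G_{\beta,K}^{(2)}(0)=2\beta K-\alpha^{2}c_\beta''(0)$, $G_{\beta,K}^{(4)}(0)=-\alpha^{4}c_\beta^{(4)}(0)$ and $G_{\beta,K}^{(6)}(0)=-\alpha^{6}c_\beta^{(6)}(0)$, so everything reduces to low-order cumulants of $\rho_\beta$. Here the single-site space helps: since $\omega_1\in\{-1,0,1\}$ we have $\omega_1^{2m}=\omega_1^{2}$ for every $m\ge 1$, so every odd moment of $\rho_\beta$ vanishes and every even moment equals $p:=\E_{\rho_\beta}[\omega_1^{2}]=\tfrac{2e^{-\beta}}{1+2e^{-\beta}}$. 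Feeding this into the standard moment–cumulant relations for a symmetric law yields $c_\beta''(0)=p$, $c_\beta^{(4)}(0)=p-3p^{2}=p(1-3p)$ and $c_\beta^{(6)}(0)=p-15p^{2}+30p^{3}$.

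Now the sign analysis. From the definition \eqref{KcBEG} we have $2\beta c_\beta''(0)=1/K_c(\beta)$, hence $G_{\beta,K}^{(2)}(0)=2\beta K\bigl(1-K/K_c(\beta)\bigr)$. On $A$ this is strictly positive, so $r=1$; on $B\cup C$ it vanishes, so one passes to $G_{\beta,K}^{(4)}(0)=-\alpha^{4}p(1-3p)$. The equation $p=1/3$ is equivalent to $4e^{-\beta}=1$, i.e.\ $\beta=\beta_c=\log 4$; therefore on $B$ (where $\beta<\beta_c$) one has $p>1/3$, so $1-3p<0$ and $G_{\beta,K}^{(4)}(0)>0$, giving $r=2$, whereas at $C$ one has $p=1/3$, so $G_{\beta,K}^{(4)}(0)=0$ as well. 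Finally, at $C$ the sixth cumulant is $c_{\beta_c}^{(6)}(0)=\tfrac13-\tfrac{15}{9}+\tfrac{30}{27}=-\tfrac29<0$, whence $G_{\beta,K}^{(6)}(0)=\tfrac29\,\alpha^{6}>0$ and the type is exactly $r=3$.

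The individual steps are elementary; the places that need care are the bookkeeping in the moment–cumulant formulae (in particular getting $c_\beta^{(6)}(0)$ right) and noticing the conceptual point that really drives the theorem, namely that the fourth cumulant of $\rho_\beta$ changes sign precisely as $p=\E_{\rho_\beta}[\omega_1^{2}]$ crosses $1/3$, which happens exactly at $\beta=\log 4$, the value defining the tricritical point $C$. That the remainders in the expansions may be written as $\Oo(x^{2r+1})$ is immediate from the real-analyticity of $G_{\beta,K}$ noted before the statement.
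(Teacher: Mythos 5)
Your argument is correct, and it is worth noting that the paper itself does not prove this statement at all: Theorem \ref{type} is simply quoted from \cite[Theorem 4.2]{CosteniucEllis:2007}, so you are supplying a self-contained verification where the paper relies on a citation. Your route checks out in detail. Evenness of $c_\beta$ in \eqref{CBEG} does kill all odd derivatives of $G_{\beta,K}$ at $0$; the chain rule gives $G^{(2)}_{\beta,K}(0)=2\beta K-\alpha^2c_\beta''(0)$, $G^{(2j)}_{\beta,K}(0)=-\alpha^{2j}c_\beta^{(2j)}(0)$ for $j=2,3$ with $\alpha=2\beta K$; and since $\omega_1^{2m}=\omega_1^2$ on $\{-1,0,1\}$, the cumulants are exactly as you state, $c_\beta''(0)=p$, $c_\beta^{(4)}(0)=p(1-3p)$, $c_\beta^{(6)}(0)=p-15p^2+30p^3$ with $p=2e^{-\beta}/(1+2e^{-\beta})=2/(e^\beta+2)$ (a direct expansion of $\log(1-p+p\cosh t)$ confirms the sixth-order coefficient). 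The sign analysis is right: $G^{(2)}_{\beta,K}(0)=2\beta K(1-K/K_c(\beta))$ by \eqref{KcBEG}, positive on $A$ and zero on $B\cup C$; $p=1/3$ exactly at $\beta=\beta_c=\log 4$, so $G^{(4)}_{\beta,K_c(\beta)}(0)=-\alpha^4p(1-3p)>0$ on $B$ and vanishes at $C$; and $c_{\beta_c}^{(6)}(0)=-2/9$ gives $G^{(6)}(0)=\tfrac29\alpha^6>0$ at $C$. Indeed your formulas reproduce the expressions the paper quotes later, $G^{(2)}_{\beta,K}(0)=\frac{2\beta K(e^\beta+2-4\beta K)}{e^\beta+2}$ and $G^{(4)}_{\beta,K}(0)=\frac{2(2\beta K)^4(4-e^\beta)}{(e^\beta+2)^2}$, which is a good consistency check. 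The statement $\Mm_{\beta,K}=\{0\}$ on $A\cup B\cup C$ you correctly attribute to Lemma \ref{aquMG} together with \cite[Theorems 3.6, 3.8]{Ellis/Otto/Touchette:2005}, exactly as the paper does. What your computation buys over the bare citation is transparency: it isolates the mechanism behind the tricritical point (the fourth cumulant of $\rho_\beta$ changes sign precisely when $p$ crosses $1/3$, i.e.\ at $\beta=\log 4$) and yields the explicit coefficients needed elsewhere, e.g.\ $G^{(6)}_{\beta_c,K_c(\beta_c)}(0)=\tfrac29\cdot 3^6=162$, consistent with the limiting value $6\,\E(W_{1/6}^6)\to 40/9$ quoted after Theorem \ref{Tfixed}; the cost is only the routine bookkeeping in the moment--cumulant relations, which you have done correctly.
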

\begin{remark}
For the values of the parameters that were not considered in Theorem \ref{type} the type of the global minimum points is 1, which is proven in \cite[Theorem 6.3]{Ellis/Otto/Touchette:2005}.
\end{remark}
This theorem will yield the Taylor expansion of $G_{\beta,K}$ if $(\beta,K)$ is fixed and taken from one of the sets $A$, $B$ or $C$. Next we deal with the sequences \eqref{BetanBEG} and \eqref{KnBEG} and the associated function $G_{\beta_n,K_n}$. The following theorem yields three different forms of the Taylor expansion of $G_{\beta_n,K_n}$, see \cite[Theorem 4.3]{CosteniucEllis:2007}.

\begin{theorem}
For $\gamma\in\R_+$ and for a positive bounded sequence $(\beta_n,K_n)$ the following conclusions hold. Let for $R>0$, $\abs{x}<R$.
\begin{enumerate}
\item For $(\beta_n,K_n)\rightarrow (\beta,K)\in A$ the type of the minimum point $0 \in {\mathcal M}_{\beta,K}$ is $r=1$ and there exists $\xi=\xi(x)\in [-x,x]$ such that
\begin{eqnarray}\label{taylorGBEGA}
G_{\beta_n,K_n}(x)=\frac{G_{\beta_n,K_n}^{(2)}(0)}{2}x^2+A_{\beta_n,K_n}(\xi(x))x^3.
\end{eqnarray}
The error terms $A_{\beta_n,K_n}(\xi(x))$ are uniformly bounded over $n\in\N$ and $x\in(-R,R)$. 
We have 
$G_{\beta_n,K_n}^{(2)}\left(0\right)=\frac{(2\beta_n K_n)(e^{\beta_n} +2 - 4 \beta_n K_n)}{e^{\beta_n}+2}$.
\item For $(\beta_n,K_n)\rightarrow (\beta,K_c(\beta))\in B$ the type of the minimum point  $0 \in {\mathcal M}_{\beta, K_c(\beta)}$
is $r=2$ and there exists $\xi=\xi(x)\in [-x,x]$ such that
\begin{eqnarray}\label{taylorGBEGB}
G_{\beta_n,K_n}(x)=\frac{G_{\beta_n,K_n}^{(2)}(0)}{2}x^2+\frac{G_{\beta_n,K_n}^{(4)}(0)}{24}x^4+B_{\beta_n,K_n}(\xi(x))x^5.
\end{eqnarray}
The error terms $B_{\beta_n,K_n}(\xi(x))$ are uniformly bounded over $n\in\N$ and $x\in(-R,R)$. 
We have $G_{\beta_n,K_n}^{(4)}\left(0\right)= \frac{2(2\beta_nK_n)^4(4-e^{\beta_n})}{(e^{\beta_n}+2)^2}$ and $G_{\beta_n,K_n}^{(2)}\left(0\right) \to 0$
for $n \to \infty$.
\item For $(\beta_n,K_n)\rightarrow (\beta_c,K_c(\beta_c))\in C$ the type of the minimum point $0 \in {\mathcal M}_{\beta_c, K_c(\beta_c)}$ is $r=3$ and there exists $\xi=\xi(x)\in [-x,x]$ such that
\begin{align}\label{taylorGBEGC}
G_{\beta_n,K_n}(x)=\frac{G_{\beta_n,K_n}^{(2)}(0)}{2}x^2+\frac{G_{\beta_n,K_n}^{(4)}(0)}{24}x^4+\frac{G_{\beta_n,K_n}^{(6)}(0)}{6!}x^6+C_{\beta_n,K_n}(\xi(x))x^7.
\end{align}
The error terms $C_{\beta_n,K_n}(\xi(x))$ are uniformly bounded over $n\in\N$ and $x\in(-R,R)$. We have
$G_{\beta_n,K_n}^{(2)}\left(0\right) \to 0$ and $G_{\beta_n,K_n}^{(4)}\left(0\right) \to 0$ for $n \to \infty$.
\item Furthermore let us choose the sequence $K_n$ as in  
\eqref{KnBEG}, then in (2)-(3) we obtain
\begin{align}\label{2ABlGBEG}
G_{\beta_n,K_n}^{(2)}\left(0\right) = \frac{k}{n^{\Delta_2}} \, C_n^{(2)}
\end{align}
with $C_n^{(2)} = \frac{2 \beta_n K_n}{K_c(\beta_n)} \to 2 \beta_c$.
Moreover if we assume that $\beta_n$ is chosen as in \eqref{BetanBEG}  we have in (2)-(3) that
\begin{align}\label{4ABlGBEG}
G_{\beta_n,K_n}^{(4)}\left(0\right)=
\frac{b}{n^{\Delta_1}}\, C_n^{(4)}
\end{align}
with $C_n^{(4)} = \frac{2(2 \beta_n K_n)^4}{(e^{\beta_n}+2)^2} \to \frac 92$, see \cite[(4.9),(4.10),(7.4) and (7.5)]{CosteniucEllis:2007}. 
\end{enumerate}
\end{theorem}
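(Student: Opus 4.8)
The plan is to reduce the entire statement to Taylor's theorem for the real-analytic function $G_{\beta,K}$ together with a short list of explicit derivative computations. Writing $a := 2\beta K$, formulas \eqref{GBEG} and \eqref{CBEG} give $G_{\beta,K}(x) = \tfrac{a}{2}x^{2} - c_{\beta}(ax)$ with $c_{\beta}(t) = \log(1+2e^{-\beta}\cosh t) - \log(1+2e^{-\beta})$. Since $1+2e^{-\beta}\cosh t \ge 1 > 0$ for every $t\in\R$, the function $c_{\beta}$ is even and real-analytic on $\R$; hence so is $G_{\beta,K}$, and one has $G_{\beta,K}(0)=0$ as well as $G_{\beta,K}^{(1)}(0)=G_{\beta,K}^{(3)}(0)=G_{\beta,K}^{(5)}(0)=0$. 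The \emph{type} $r$ of the limiting minimum point $0$ in cases (1), (2), (3) equals $1$, $2$, $3$ by Theorem \ref{type} applied to the limit point in $A$, $B$, $C$ respectively.

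First I would prove \eqref{taylorGBEGA}--\eqref{taylorGBEGC}. In case (1), Taylor's theorem with Lagrange remainder at order $3$, using $G_{\beta_n,K_n}(0)=G_{\beta_n,K_n}^{(1)}(0)=0$, gives \eqref{taylorGBEGA} with $A_{\beta_n,K_n}(\xi)=G_{\beta_n,K_n}^{(3)}(\xi)/3!$; in case (2), expanding to order $5$ and dropping the vanishing $x^{3}$-term gives \eqref{taylorGBEGB} with $B_{\beta_n,K_n}(\xi)=G_{\beta_n,K_n}^{(5)}(\xi)/5!$; in case (3), expanding to order $7$ and dropping the vanishing $x^{3}$- and $x^{5}$-terms gives \eqref{taylorGBEGC} with $C_{\beta_n,K_n}(\xi)=G_{\beta_n,K_n}^{(7)}(\xi)/7!$. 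For the uniform boundedness of the error coefficients, observe that each $c_{\beta}^{(j)}$ is a fixed rational expression in $\cosh t$, $\sinh t$ and $e^{-\beta}$ whose denominator is a power of $1+2e^{-\beta}\cosh t\ge 1$, hence bounded on all of $\R$ uniformly for $\beta$ in a bounded set; since $G_{\beta,K}^{(j)}(y)=-a^{j}c_{\beta}^{(j)}(ay)$ for $j\ge 3$, the sequence $(\beta_n,K_n)$ is positive and bounded, and $|\xi|\le|x|<R$, the quantities $A_{\beta_n,K_n}(\xi(x))$, $B_{\beta_n,K_n}(\xi(x))$, $C_{\beta_n,K_n}(\xi(x))$ are bounded uniformly over $n\in\N$ and $|x|<R$.

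Next I would compute the coefficients. Differentiating gives $G_{\beta,K}^{(2)}(0)=a-a^{2}c_{\beta}^{(2)}(0)$ and $G_{\beta,K}^{(4)}(0)=-a^{4}c_{\beta}^{(4)}(0)$, while $c_{\beta}^{(2)}(0)$ and $c_{\beta}^{(4)}(0)$ are the second and fourth cumulants of $\rho_{\beta}$, which places mass $p=1/(e^{\beta}+2)$ at each of $\pm 1$; thus $c_{\beta}^{(2)}(0)=2p=2/(e^{\beta}+2)$ and $c_{\beta}^{(4)}(0)=2p-12p^{2}=2(e^{\beta}-4)/(e^{\beta}+2)^{2}$. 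Substituting yields $G_{\beta,K}^{(2)}(0)=(2\beta K)(e^{\beta}+2-4\beta K)/(e^{\beta}+2)$ and $G_{\beta,K}^{(4)}(0)=2(2\beta K)^{4}(4-e^{\beta})/(e^{\beta}+2)^{2}$, the asserted formulas. Since $e^{\beta}+2-4\beta K$ vanishes exactly for $K=K_c(\beta)$ by \eqref{KcBEG}, and $4-e^{\beta_c}=0$, continuity of these expressions along $(\beta_n,K_n)$ forces $G_{\beta_n,K_n}^{(2)}(0)\to 0$ in cases (2) and (3), and additionally $G_{\beta_n,K_n}^{(4)}(0)\to 0$ in case (3).

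Finally, for part (4) I would substitute the sequences. From \eqref{KcBEG} we have $e^{\beta_n}+2=4\beta_n K_c(\beta_n)$, so with $K_n=K_c(\beta_n)-k\,n^{-\Delta_2}$ one gets $e^{\beta_n}+2-4\beta_n K_n=4\beta_n k\,n^{-\Delta_2}$ and hence $G_{\beta_n,K_n}^{(2)}(0)=k\,n^{-\Delta_2}\cdot 2\beta_n K_n/K_c(\beta_n)$, which is \eqref{2ABlGBEG} with $C_n^{(2)}=2\beta_n K_n/K_c(\beta_n)$; its limit follows from the limits of $\beta_n,K_n$ and equals $2\beta_c$ when $\beta_n\to\beta_c$. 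Likewise, with $\beta_n=\log(e^{\beta_c}-b\,n^{-\Delta_1})$ and $e^{\beta_c}=4$ we have $4-e^{\beta_n}=b\,n^{-\Delta_1}$, so $G_{\beta_n,K_n}^{(4)}(0)=b\,n^{-\Delta_1}\cdot 2(2\beta_n K_n)^{4}/(e^{\beta_n}+2)^{2}$, which is \eqref{4ABlGBEG} with $C_n^{(4)}=2(2\beta_n K_n)^{4}/(e^{\beta_n}+2)^{2}$; since $\beta_n\to\log 4$ and $K_n\to K_c(\beta_c)=3/(2\log 4)$ one has $2\beta_n K_n\to 3$ and $e^{\beta_n}+2\to 6$, whence $C_n^{(4)}\to 2\cdot 3^{4}/6^{2}=9/2$. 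I expect the only steps needing genuine care to be the uniform-in-$n$ control of the derivatives of $c_{\beta}$ underlying the boundedness of the error terms, and the exact bookkeeping of the cumulants and the algebraic identity $e^{\beta}+2=4\beta K_c(\beta)$ that makes the factors $n^{-\Delta_2}$ and $n^{-\Delta_1}$ separate cleanly; there is no conceptual obstacle beyond that.
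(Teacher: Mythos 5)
Your argument is correct and complete. Note that the paper does not prove this theorem at all: it is imported verbatim from \cite[Theorem 4.3]{CosteniucEllis:2007} (together with (4.9), (4.10), (7.4), (7.5) there), so what you have supplied is a self-contained verification rather than an alternative to an in-paper proof. Your route -- Taylor's theorem with Lagrange remainder for the even, real-analytic function $G_{\beta,K}(x)=\tfrac{a}{2}x^2-c_\beta(ax)$, $a=2\beta K$, plus the identification of $c_\beta^{(2)}(0)=2p$ and $c_\beta^{(4)}(0)=2p-12p^2$ with $p=(e^\beta+2)^{-1}$, plus the algebraic identity $e^\beta+2=4\beta K_c(\beta)$ -- is exactly the natural one and all the computations check out. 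Two small remarks. First, for the uniform boundedness of the error terms you only need $c_{\beta}^{(j)}$ bounded for the argument $a_n\xi$ ranging over the compact set $[-a_{\max}R,a_{\max}R]$ with $\beta$ in a bounded set (denominator $1+2e^{-\beta}\cosh t\ge 1$), so your stronger claim of boundedness on all of $\R$ is unnecessary. Second, you correctly observe that $C_n^{(2)}=2\beta_nK_n/K_c(\beta_n)\to 2\beta$ in case (2) and equals $2\beta_c$ only when $\beta_n\to\beta_c$ as in case (3); the theorem statement (and the paper's later use in Lemma \ref{MomBEG}) glosses over this, so your version is in fact the more careful one.
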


We next preview the contents of the present paper. In the next section, Section 2, we state the limit theorems and the rates of convergence for the total spin per site. We are able to obtain 21 different limiting densities that result from the values of $k$ and $b$ defining the physically dissimilar regions of points, $A$, $B$ and $C$, the sequence $(\beta_n,K_n)$ is converging from. Formulating the corresponding rates of convergence we will see that the 21 cases split into 42 cases that result from the values of $\gamma$, $\Delta_1$ and $\Delta_2$ defined in \eqref{BetanBEG} and \eqref{KnBEG} and thus depend on the speed at which $(\beta_n, K_n) \to (\beta,K)$. The proofs of our Theorems will be presented in Section \ref{Subsec:Proofs}. They apply Stein's method, which shortly will be introduced in Section 3.

\section{Limit Theorems and rates of convergence}\label{subsec:Statement of Results BME}

We are prepared to state our results.
Because of the intricate structure of the model we are able to find three different limit theorems if $(\beta,K)$ is assumed to be fixed and 18 scaling limits for the total spin as $(\beta,K)$ converges along the sequences $(\beta_n,K_n)$ defined in \eqref{BetanBEG} and \eqref{KnBEG} to points belonging to the sets $A$, $B$ and $C$ defined in Section 1. 
Let \begin{align}\label{Wg}
W_{\gamma}:=\frac{S_n}{n^{1-\gamma}}\text{ with }\gamma\in(0,1/2].
\end{align}

\subsection{Rates of convergence for fixed $(\beta,K)$ and $(\beta_n,K_n) \to (\beta,K) \in A$}

First of all we assume that $(\beta,K)$ is fixed. Limit theorems for the spin per site were first discussed in \cite{CosteniucEllis:2007}.

\begin{theorem}\label{Tfixed}
Let $W_{\gamma}$ be defined in \eqref{Wg}. 
\begin{enumerate}
\item For $(\beta,K)\in A$ we have $\gamma=1/2$. If  $Z_A$ is a random variable distributed according to the normal distribution $N(0, \E W_{1/2}^2)$ on $\R$ with expectation zero and variance $\E (W_{1/2}^2)$, we have that
\begin{align} \label{r1}
\sup\limits_{t\in\R}\big| P\bigl(W_{1/2}\leq t\bigr) - P\bigl(Z_A\leq t\bigr) \big| \leq L_1 \cdot n^{-1/2},
\end{align}
for some constant $L_1$ depending only on $(\beta,K)$.
\item For $(\beta,K_c(\beta))\in B$ we have $\gamma=1/4$. If  $Z_B$ is a random variable distributed according to the probability measure on $\R$ with density
$
f_{B}(t):= L_2 \cdot \exp \left( -c t^4\right),
$
with $c = c(W_{1/4}) = (4 \E(W_{1/4}^4))^{-1}$,
and $L_2$ the appropriately chosen normalisation constant,
we have that
\begin{align*}
\sup\limits_{t\in\R}\big| P\bigl(W_{1/4}\leq t\bigr) - P\bigl(Z_B\leq t\bigr) \big| \leq L_3 \cdot n^{-1/4},
\end{align*}
for some constant $L_3$ depending only on $(\beta,K_c(\beta))$.
\item For $(\beta_c,K_c(\beta_c))$ we have $\gamma=1/6$. If  $Z_C$ is a random variable distributed according to the probability measure on $\R$ with density
$f_{C}(t):= L_4 \cdot \exp \left( - d t^6\right)$ with $d= d(W_{1/6})= (6 \E(W_{1/6}^6))^{-1}$ 
and $L_4$ the appropriately chosen normalisation constant,
we have that
\begin{align*}
\sup\limits_{t\in\R}\big| P\bigl(W_{1/6}\leq t\bigr) - P\bigl(Z_C\leq t\bigr) \big| \leq L_5 \cdot n^{-1/6},
\end{align*}
for some constant $L_5$ depending only on $(\beta_c,K_c(\beta_c))$.
\end{enumerate}
\end{theorem}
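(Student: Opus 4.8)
\noindent\emph{Sketch of the argument.}
The plan is to run Stein's method of exchangeable pairs, in the spirit of \cite{Eichelsbacher/Loewe:2010} and \cite{Chatterjee/Shao:2011}. Write $r=1,2,3$ according as $(\beta,K)$ lies in $A$, $B$ or $C$, so that by Theorem~\ref{type} the global minimum $0\in\Mm_{\beta,K}$ has type $r$ and $\gamma=1/(2r)$. First I would build an exchangeable pair $(W,W')$, $W:=W_\gamma$, by choosing an index $I$ uniformly at random and replacing $\omega_I$ by an independent draw from its $P_{\beta,K,n}$-conditional law given $(\omega_j)_{j\neq I}$, setting $W':=S_n(\omega')/n^{1-\gamma}$; then $|W-W'|\le 2\,n^{-(1-\gamma)}$. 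With $\F=\sigma(\omega)$, the mean-field representation \eqref{PBEG2} shows that, conditionally on $(\omega_j)_{j\neq I}$, the law of $\omega_I'$ is the exponential tilt of $\rho_\beta$ by the parameter $2\beta K\,S_n/n$, up to corrections of relative size $\Oo(1/n)$ stemming from the bounded single-site contribution. Recalling $c_\beta$ from \eqref{CBEG} and that $G'_{\beta,K}(x)=2\beta K\bigl(x-c_\beta'(2\beta Kx)\bigr)$ by \eqref{GBEG}, averaging over $I$ gives
\begin{align}\label{pl:drift}
\E\bigl[W-W'\mid\F\bigr]=\frac{n^{-(1-\gamma)}}{2\beta K}\,G'_{\beta,K}\!\bigl(W\,n^{-\gamma}\bigr)+\Oo\bigl(n^{-2+\gamma}\bigr).
\end{align}

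Next, since $G_{\beta,K}$ is even, real analytic and has a type-$r$ minimum at $0$, one has $G'_{\beta,K}(x)=\tfrac{G^{(2r)}_{\beta,K}(0)}{(2r-1)!}\,x^{2r-1}+\Oo(x^{2r})$ with $G^{(2r)}_{\beta,K}(0)>0$ and all intermediate derivatives vanishing. Inserting $x=W\,n^{-\gamma}$ into \eqref{pl:drift} and using $2(1-\gamma)=(2r-1)/r$ produces the approximate regression identity
\begin{align}\label{pl:reg}
\E\bigl[W-W'\mid\F\bigr]=\lambda\bigl(\alpha\,W^{2r-1}+R_1\bigr),\qquad \lambda:=n^{-(2r-1)/r},\quad \alpha:=\frac{G^{(2r)}_{\beta,K}(0)}{2\beta K\,(2r-1)!},
\end{align}
with remainder $|R_1|\le\const(W^{2r}+1)\,n^{-\gamma}$. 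Computing $\E[(\omega_I-\omega_I')^2\mid\F]$ through the first two moments of the tilted on-site law together with the law of large numbers $\tfrac1n\sum_j\omega_j^2\to c_\beta''(0)$ likewise yields the diffusion condition $\E[(W-W')^2\mid\F]=\lambda\,(v+R_2)$ with $v:=2c_\beta''(0)>0$ and $\E|R_2|=\Oo(n^{-\gamma})$.

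The technical heart, and the step I expect to cost the most work, is to show that $\E\bigl[W_\gamma^{2r+2}\bigr]$ is bounded uniformly in $n$: this is exactly what converts the pointwise estimates above into the $L^1$-bounds $\E|R_1|,\E|R_2|=\Oo(n^{-\gamma})$ and makes the limiting constants finite. For fixed $(\beta,K)\in A\cup B\cup C$ I would obtain it from the large deviation principle for $(S_n/n)_n$ recalled in Section~1, which forces exponentially tight concentration of $S_n/n$ near the unique macrostate $0$, sharpened to the scale $S_n/n=\Oo(n^{-\gamma})$ by a Laplace-type expansion of the density \eqref{PBEG2} around $0$ that uses the Taylor expansion of $G_{\beta,K}$ from \cite[Theorem~4.2]{CosteniucEllis:2007}; an alternative is to bootstrap the bound from \eqref{pl:reg} itself, testing the exchangeable-pair identity $\E\bigl[(W-W')(g(W)+g(W'))\bigr]=0$ against the polynomials $g(w)=w,w^{3},\dots$ to obtain a moment recursion.

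Finally I would feed \eqref{pl:reg}, the diffusion condition, $|W-W'|\le 2n^{-(1-\gamma)}$ and $\sup_n\E W_\gamma^{2r+2}<\infty$ into the Berry--Esseen bound of Stein's method for exchangeable pairs --- the classical normal version (Stein, Rinott--Rotar) for $r=1$, and for $r=2,3$ the non-normal version of \cite{Eichelsbacher/Loewe:2010} (see also \cite{Chatterjee/Shao:2011}), whose hypotheses are precisely a regression identity of the form \eqref{pl:reg} with $\psi(w)=\alpha w^{2r-1}$ together with the diffusion condition. Solving the associated Stein equation for the density proportional to $\exp\!\bigl(-\tfrac{\alpha}{2rv}t^{2r}\bigr)$ and collecting errors, the dominant term is $\E|R_1|=\Oo(n^{-\gamma})$, while the remaining contributions $\lambda^{-1}\E|W-W'|^3=\Oo(n^{-(1-\gamma)})$ and $\lambda^{-1}\sqrt{\operatorname{Var}\bigl(\E[(W-W')^2\mid\F]\bigr)}$ are $\Oo(n^{-\gamma})$ or smaller; this gives the Kolmogorov bounds $\Oo(n^{-1/2})$, $\Oo(n^{-1/4})$ and $\Oo(n^{-1/6})$. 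The limiting law is then identified by the integration-by-parts identity $\E Z^{2r}=(2rc)^{-1}$ valid for a density $\propto e^{-ct^{2r}}$, which makes $\tfrac{\alpha}{2rv}$ agree with $\bigl(2r\,\E W_\gamma^{2r}\bigr)^{-1}$ up to an $\Oo(n^{-\gamma})$ error controlled by the same moment estimates --- that is, with $c=(4\E W_{1/4}^4)^{-1}$, $d=(6\E W_{1/6}^6)^{-1}$ and variance $\E W_{1/2}^2$ as in the statement; absorbing that discrepancy into the constants $L_1,L_3,L_5$ finishes the proof.
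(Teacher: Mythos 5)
Your overall strategy coincides with the paper's: the Gibbs-sampling exchangeable pair, the regression identity obtained through $G^{(1)}_{\beta,K}$ and the Taylor expansion at the type-$r$ minimum, the identification $\lambda=n^{-(2r-1)/r}$, and the recognition that uniform moment bounds for $W_\gamma$ are the technical heart (the paper proves them by the Hubbard--Stratonovich argument of Lemma \ref{MomBEG}, and controls the fluctuation of $\E[(W-W')^2\mid\F]$ through the covariance estimates of Lemmas \ref{doppelspin} and \ref{covest}, which your appeal to a law of large numbers for $\tfrac1n\sum_j\omega_j^2$ presupposes but does not prove). The genuine gap is in your final step. First, your explicit target density is off by a factor $2$: with $\E[W-W'\mid\F]\approx\lambda\,\alpha W^{2r-1}$ and $\E[(W-W')^2\mid\F]\approx\lambda v$, the exchangeability identity $\E[(W'-W)(f(W')+f(W))]=0$ yields the approximate characterisation $\E[\,v f'(W)-2\alpha W^{2r-1}f(W)\,]\approx 0$, i.e.\ a density proportional to $\exp\bigl(-\tfrac{\alpha}{rv}t^{2r}\bigr)$, not $\exp\bigl(-\tfrac{\alpha}{2rv}t^{2r}\bigr)$; for $r=1$ only the former reproduces the limiting variance $(G^{(2)}_{\beta,K}(0))^{-1}-(2\beta K)^{-1}$ recorded in Remark \ref{koffwahl}, and with your constant the discrepancy to $(2r\,\E W_\gamma^{2r})^{-1}$ is of order one, so it cannot be absorbed into $L_1,L_3,L_5$.

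Second, and more substantively, even after correcting the constant, your transfer from the fixed-constant density to the densities in the statement (which are built from the finite-$n$ moments $\E W_{1/2}^2$, $\E W_{1/4}^4$, $\E W_{1/6}^6$) rests on the claim that $(2r\,\E W_\gamma^{2r})^{-1}$ agrees with the explicit constant up to $\Oo(n^{-\gamma})$, ``controlled by the same moment estimates''. Uniform boundedness of all moments gives no rate of convergence of $\E W_\gamma^{2r}$ to its limit, so this step does not follow from what you have established; it would need a separate quantitative argument (e.g.\ a Laplace expansion of the Hubbard--Stratonovich density). The paper avoids the issue entirely by applying Theorem \ref{generaldensity2} (and Corollary \ref{corsigma} for $r=1$), whose target law $p_W$ is by construction normalised by $\E[W\psi(W)]$ and is therefore exactly $N(0,\E W_{1/2}^2)$, $\propto\exp\bigl(-x^4/(4\E W_{1/4}^4)\bigr)$, $\propto\exp\bigl(-x^6/(6\E W_{1/6}^6)\bigr)$ --- this is precisely why the theorem is stated with $n$-dependent moments, and why no identification of limiting moments or of their convergence rate is ever required. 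If you replace your last paragraph by an application of these results and supply the covariance bound ${\rm Cov}(\omega_i^2,\omega_j^2)=\Oo(n^{-\min(4\gamma,1)})$ needed for ${\rm Var}\bigl(\E[(W-W')^2\mid\F]\bigr)$, the remainder of your sketch goes through.
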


\begin{remark}
Theorem \ref{Tfixed} shows that the rate of convergence is affected by the set containing $(\beta,K)$. For the BEG model, to the best of our knowledge, our 
results are the first ones, where the quality of approximation was estimated. In region $A$ we found an optimal rate $n^{-1/2}$ for
the Kolmogorov distance known as a Berry-Esseen type result. We do not know whether the other rates are optimal. There is one case known in the
literature, where the limiting density is of the form $\exp(- {\rm const.} x^4)$ and the rate of convergence is of order $n^{-1/2}$: This is the rescaled total spin in the classical Curie-Weiss model at the critical temperature $\beta_c$, see \cite[Theorem 3.8]{Eichelsbacher/Loewe:2010}. The technical reason is that
the Taylor expansion of the corresponding function $G_{\beta}$ is given by the Taylor expansion of $\tanh(\cdot)$. In the same paper, generalisations of the Curie-Weiss model lead to bounds of order $n^{-1/(2k)}$ whenever the limiting density is of type  $\exp(- {\rm const.} x^{2k})$.
Note that Theorems 5.5, 6.1 (Case 1) and Theorem 7.1 (Case 1)
in \cite{CosteniucEllis:2007} follow from our result. We present a proof by Stein's method, avoiding the application of transforms. The limiting densities in \cite{CosteniucEllis:2007} show moreover, that
for $(\beta,K)\in A$ we have $\lim_{n \to \infty} \E(W_{1/2}^2) = \bigl( G_{\beta, K}^{(2)}(0) \bigr)^{-1} - (2 \beta K)^{-1} $, for $(\beta,K)\in B$ we have $\lim_{n \to \infty} 4 \E(W_{1/4}^4) = \frac{2^3 4}{(e^{\beta} +2)^2(4-e^{\beta})}$ (see \cite[(6.5)]{CosteniucEllis:2007}). Finally for $(\beta_c,K_c(\beta_c))$
we have $\lim_{n \to \infty} 6 \E(W_{1/6}^6) = \frac{40}{9}$ (see \cite[Theorem 7.1]{CosteniucEllis:2007}). The choices of our densities of the random variables $Z_A, Z_B$ and $Z_C$ will be explained in Section 3 in more detail. If the limiting density of a random variable is not known and especially the limiting
moments are unknown, it is a remarkable advantage of applying Stein's method to be able to compare the distribution of a random variable $W$ of interest with
a distribution which inherits some moments of $W$, which characterise the limiting distribution.
\end{remark}

If $(\beta_n,K_n)$ denotes a positive, bounded sequence converging to $(\beta, K)\in A$ the situation of the scaling limits is as follows:

\begin{theorem}\label{nA}
Let $(\beta_n,K_n)$ be an arbitrary positive, bounded sequence that converges to $(\beta,K)\in A$. Then we obtain $\gamma =1/2$ 
and the same result as in \eqref{r1}, Theorem \ref{Tfixed}.
\end{theorem}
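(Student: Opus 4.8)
The claim is that when $(\beta_n,K_n)$ is an arbitrary positive bounded sequence converging to $(\beta,K)\in A$, the rescaled total spin $W_{1/2}=S_n/\sqrt n$ obeys the same Berry--Esseen bound \eqref{r1} as in the fixed-parameter case. The strategy is to realise this as a direct application of Stein's method for exchangeable pairs for normal approximation, exactly as in the proof of Theorem \ref{Tfixed}(1), checking that every estimate there is in fact \emph{uniform} in $n$ once we know $(\beta_n,K_n)$ stays in a compact neighbourhood of $(\beta,K)\in A$. First I would construct the exchangeable pair $(W_{1/2},W_{1/2}')$ in the usual way: pick a coordinate $I$ uniformly at random and resample $\omega_I$ from its conditional distribution under $P_{\beta_n,K_n,n}$, then set $W'_{1/2}=(S_n-\omega_I+\omega_I')/\sqrt n$. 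One then computes the linear regression property $\E[W_{1/2}-W'_{1/2}\mid \omega] = \lambda\bigl(W_{1/2} - R\bigr)$ with $\lambda$ of order $1/n$ and a small remainder $R$; the coefficient and remainder are controlled by the Taylor expansion \eqref{taylorGBEGA} of $G_{\beta_n,K_n}$ about its type-$r=1$ minimum $0$, together with the explicit formula $G_{\beta_n,K_n}^{(2)}(0)=\frac{(2\beta_nK_n)(e^{\beta_n}+2-4\beta_nK_n)}{e^{\beta_n}+2}$.

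\textbf{Key steps.} (i) Since $(\beta,K)\in A$ means $0<\beta\le\beta_c$ and $0<K<K_c(\beta)$, we have $e^{\beta}+2-4\beta K>0$ strictly, hence $G^{(2)}_{\beta,K}(0)>0$; by continuity $G^{(2)}_{\beta_n,K_n}(0)$ is bounded away from $0$ and from $\infty$ for all large $n$. This is the quantitative non-degeneracy that makes the normal approximation work and is what fails on $B$ and $C$. (ii) The error coefficients $A_{\beta_n,K_n}(\xi(x))$ in \eqref{taylorGBEGA} are, by the cited theorem, uniformly bounded over $n$ and over $x$ in a fixed ball, so the cubic remainder in the conditional-expectation identity is uniformly $O(1/n)$ after rescaling. (iii) One then invokes the standard exchangeable-pair Berry--Esseen bound (Stein's method, to be recalled in Section 3): the Kolmogorov distance between $W_{1/2}$ and $N(0,\sigma_n^2)$ with $\sigma_n^2=\E W_{1/2}^2$ is bounded by a constant times
\[
\frac{1}{\lambda}\sqrt{\operatorname{Var}\,\E[(W_{1/2}-W'_{1/2})^2\mid\omega]}\;+\;\frac{1}{\lambda}\,\E|W_{1/2}-W'_{1/2}|^3\;+\;\frac{\E|R|}{\lambda},
\]
and each term is $O(n^{-1/2})$ because $|W_{1/2}-W'_{1/2}|\le 2/\sqrt n$ deterministically, $\lambda\asymp 1/n$, and the variance and remainder terms are controlled using (i)--(ii). (iv) Finally, since $Z_A\sim N(0,\E W_{1/2}^2)$ with the \emph{true} $n$-dependent variance, no extra error from mismatched variances enters; the normalising constants and the variance bound $\E W_{1/2}^2$ are themselves bounded uniformly in $n$ by the same compactness argument, so the constant $L_1$ may be taken independent of $n$.

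\textbf{Main obstacle.} The only real work is showing that all constants obtained in the proof of Theorem \ref{Tfixed}(1) are genuinely uniform over the sequence $(\beta_n,K_n)$, rather than depending on a fixed $(\beta,K)$. This reduces to two things: that $G^{(2)}_{\beta_n,K_n}(0)$ is bounded below by a positive constant (immediate from $(\beta,K)$ lying in the open region $A$ plus continuity, as in step (i)), and that the higher-order Taylor coefficients and the moments $\E W_{1/2}^2$, $\E|W_{1/2}|^3$ are bounded above uniformly --- which follows from the uniform boundedness of $A_{\beta_n,K_n}(\cdot)$ and a uniform Gaussian-type concentration estimate for $S_n/\sqrt n$ under $P_{\beta_n,K_n,n}$ on the compact parameter set. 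Once these uniform bounds are in hand, the proof is literally the computation of Theorem \ref{Tfixed}(1) carried through with $(\beta,K)$ replaced by $(\beta_n,K_n)$, so I would state it as such rather than repeating it.
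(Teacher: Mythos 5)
Your proposal is correct and follows essentially the same route as the paper: the paper's proof of Theorem \ref{nA} simply observes that, since $0$ remains the unique minimum of type $r=1$ throughout $A$ and the Taylor expansion \eqref{taylorGBEGA}, the moment bound of Lemma \ref{MomBEG} and the covariance estimate (via Remark \ref{nCov}) are already stated for positive bounded sequences $(\beta_n,K_n)$, one repeats the proof of Theorem \ref{Tfixed}(1) with $(\beta,K)$ replaced by $(\beta_n,K_n)$. Your uniformity checks (positivity of $G^{(2)}_{\beta_n,K_n}(0)$, uniform Taylor error bounds, uniform moments) make explicit exactly what the paper invokes implicitly through those sequence-form lemmas.
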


\subsection{Six rates of convergence for $(\beta_n,K_n) \to (\beta, K_c(\beta)) \in B$}

If $(\beta_n,K_n)$ denotes a positive, bounded sequence converging to $(\beta, K_c(\beta))\in B$ the situation of the scaling limits gets more complicated. The form of the limit theorem depends on the Taylor expansion of $G_{\beta_n,K_n}$ in the neighbourhood of the global minimum point 0. This will become evident in the proof of the next theorem and is physically motivated in \cite[Section 6]{CosteniucEllis:2007}. 

\begin{theorem}\label{nB}
For fixed $\beta\in(0,\beta_c)$, let $\beta_n$ be an arbitrary positive, bounded sequence that converges to $\beta$ and $K_n$ be the sequence defined in \eqref{KnBEG}. Let $W_{\gamma}$ be defined in \eqref{Wg}. Then by continuity of $K_c(\cdot)$ we have $(\beta_n,K_n)\rightarrow (\beta, K_c(\beta))\in B$. 
Assume that 
\begin{align*}
v=\min(2\gamma+\Delta_2-1, 4\gamma-1)=0.
\end{align*}
and let $\delta(a,b)$ equal 1 if $a=b$ and equal 0 if $a\neq b$.
Let $Z_{a_1, a_2}$ be a random variable distributed according to a densities of the form
\begin{align}\label{fnB}
f_{a_1,a_2}(x):= C \cdot \exp \left( -(\delta(v, 2\gamma+\Delta_2-1) a_1 x^2+ \delta(v, 4\gamma-1) a_2 x^4)\right),
\end{align}
for certain constants $a_1$, $a_2$ and $C$.
\begin{enumerate}
\item If $\gamma = 1/4$ and $\Delta_2 = 1/2$ there exist explicit constants $a_1, a_2 \not=0$ (depending on $\beta_n$, $K_n$, $\E W_{1/4}^2$ and $\E W_{1/4}^4$ presented explicitly in the proof) such that for a constant $C$
\begin{align*}
\sup\limits_{t\in\R}\big| P\bigl(W_{1/4}\leq t\bigr) - P\bigl(Z_{a_1, a_2} \leq t\bigr) \big| \leq C n^{-1/4}.
\end{align*}
\item If $2 \gamma = 1 - \Delta_2$, $\gamma \in (1/4, 1/2)$, $\Delta_2 \in (0, 1/2)$, we take $a_1 = (2 \E(W_{\gamma}^2))^{-1}>0$ and $a_2=0$ and obtain
with a constant $C$
\begin{align*}
\sup\limits_{t\in\R}\big| P\bigl(W_{\gamma}\leq t\bigr) - P\bigl(Z_{a_1, a_2} \leq t\bigr) \big| \leq C \begin{cases} n^{1 - 4 \gamma} &, \gamma \in (1/4, 1/3],
\\ n^{-\gamma} &, \gamma \in [1/3, 1/2). \end{cases}
\end{align*}
\item If $\gamma = 1/4$ and $\Delta_2 > 1/2$ we take $a_1 = 0$ and $a_2 = (4 \E(W_{1/4}^4))^{-1}$ and obtain with a constant $C$
\begin{align*}
\sup\limits_{t\in\R}\big| P\bigl(W_{1/4}\leq t\bigr) - P\bigl(Z_{a_1, a_2} \leq t\bigr) \big| \leq C \begin{cases} n^{-(\Delta_2-1/2)} &, \Delta_2 \in (1/2, 3/4),
\\ n^{-1/4} &, \Delta_2 \geq 3/4. \end{cases}
\end{align*}
\end{enumerate}
\end{theorem}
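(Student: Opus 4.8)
The plan is to run Stein's method for exchangeable pairs, exactly as set up in Section 3 (to be introduced there), adapted to the present scaling. First I would construct the exchangeable pair $(W_\gamma, W_\gamma')$ in the standard way for mean-field spin systems: pick a coordinate $I$ uniformly at random and resample $\omega_I$ from its conditional distribution under $P_{\beta_n,K_n,n}$ given the other spins, then set $W_\gamma' = S_n'/n^{1-\gamma}$. The key computation is the linearity-type identity $\E[W_\gamma - W_\gamma' \mid \omega] = \lambda\, F(W_\gamma) + \text{(small remainder)}$, where $F$ encodes the derivative of $G_{\beta_n,K_n}$ at the relevant scale. Using the Taylor expansion \eqref{taylorGBEGB} of $G_{\beta_n,K_n}$ together with \eqref{2ABlGBEG} and \eqref{4ABlGBEG}, the conditional drift decomposes into a term of order $G_{\beta_n,K_n}^{(2)}(0)\, W_\gamma \asymp n^{-\Delta_2} W_\gamma$ (after rescaling, $n^{2\gamma+\Delta_2-1}W_\gamma$), a term of order $G_{\beta_n,K_n}^{(4)}(0)\, W_\gamma^3$ (after rescaling, $n^{4\gamma-1}W_\gamma^3$), and a genuinely negligible $O(W_\gamma^5)$-piece plus a $1/n$ discretisation error. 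The hypothesis $v = \min(2\gamma+\Delta_2-1,\,4\gamma-1)=0$ is precisely what guarantees that at least one of the two leading coefficients stays bounded away from $0$ and $\infty$, so that $F(x) = \delta(v,2\gamma+\Delta_2-1)\,2a_1 x + \delta(v,4\gamma-1)\,4a_2 x^3$ is the correct limiting Stein characterising function for the density $f_{a_1,a_2}$ in \eqref{fnB}.

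Next I would invoke the abstract Stein bound for the solution of the Stein equation $f'(x) - F(x) f(x) = h(x) - \E h(Z_{a_1,a_2})$ (for $h$ an indicator, this gives the Kolmogorov distance). The bound is of the schematic form
\begin{align*}
d_K(W_\gamma, Z_{a_1,a_2}) \;\leq\; c_1 \,\E\Bigl| 1 - \tfrac{1}{2\lambda}\E[(W_\gamma - W_\gamma')^2 \mid \omega]\Bigr| \;+\; c_2\, \tfrac{1}{\lambda}\,\E\bigl|W_\gamma - W_\gamma'\bigr|^3 \;+\; c_3\, R_n,
\end{align*}
where $R_n$ collects the remainder coming from the higher-order Taylor term and the replacement of the exact conditional drift by $\lambda F(W_\gamma)$. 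The first two terms are the standard variance-matching and third-moment contributions; since a single spin flip changes $S_n$ by $O(1)$, one has $|W_\gamma - W_\gamma'| = O(n^{-(1-\gamma)})$ and $\lambda \asymp n^{-1}$, so $\lambda^{-1}\E|W_\gamma-W_\gamma'|^3 = O(n^{-(1-3\gamma)}) = O(n^{3\gamma - 1})$, which is $O(n^{-1/4})$ when $\gamma \le 1/4$. The dominant remainder $R_n$ is governed by the size of whichever of the two coefficients $n^{2\gamma+\Delta_2-1}$, $n^{4\gamma-1}$ is \emph{not} forced to be $\Theta(1)$ by $v=0$: in case (1) both equal $1$ and everything is $O(n^{-1/4})$; in case (2) the quartic coefficient is $n^{4\gamma-1}\to 0$, contributing an error $O(n^{4\gamma-1})$ for $\gamma\le 1/3$ which then dominates the $O(n^{-\gamma})$ Stein terms, and vice versa for $\gamma \ge 1/3$; in case (3) the quadratic coefficient is $n^{2\gamma+\Delta_2-1} = n^{\Delta_2 - 1/2}\to 0$, giving error $O(n^{-(\Delta_2-1/2)})$ for $\Delta_2 \le 3/4$ and the floor $O(n^{-1/4})$ from the Stein terms for $\Delta_2 \ge 3/4$. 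Comparing the exponents in each regime yields the stated piecewise rates, and the explicit $a_1,a_2$ are read off as the limits of the rescaled coefficients via \eqref{2ABlGBEG}--\eqref{4ABlGBEG}, namely $a_1 \to \tfrac12\cdot 2\beta_c\, k \cdot (\lim \E W_\gamma^2\text{-type constant})$ and $a_2\to \tfrac{1}{24}\cdot\tfrac92\, b$, matched to $(2\E W_\gamma^2)^{-1}$ resp. $(4\E W_\gamma^4)^{-1}$ exactly in the cases where the corresponding term is present.

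The main obstacle I anticipate is the uniform control of the remainder term $R_n$, i.e. showing that after the Taylor substitution the leftover is genuinely of the claimed order uniformly in $n$ and in the Stein-solution bounds. Two sub-issues: first, one needs moment bounds $\E|W_\gamma|^p \le C_p$ uniform in $n$, which follow from the concentration implied by the LDP/the fact that $0$ has type $r=2$ along $B$ but require care because the effective potential is only \emph{nearly} quartic (the quadratic coefficient does not vanish, it shrinks at rate $n^{-\Delta_2}$), so the tail estimates must be done with the full $G_{\beta_n,K_n}$, not its limit; second, the Stein solution $f$ for the non-Gaussian target $f_{a_1,a_2}$ must have derivatives bounded uniformly as $(a_1,a_2)$ vary over the relevant bounded family — including the degenerate limits where one coefficient tends to $0$ — which is where the regularity theory for Stein equations of the form $f' = F f + (h - \E h(Z))$ with $F$ a polynomial (as developed in \cite{Eichelsbacher/Loewe:2010} and \cite{Chatterjee/Shao:2011}) is needed. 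Once these uniform bounds are in hand, assembling the three cases is just a matter of bookkeeping the competing exponents $2\gamma+\Delta_2-1$, $4\gamma-1$, $3\gamma-1$, and $-\gamma$.
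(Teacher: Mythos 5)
Your overall strategy is the paper's: a Gibbs-sampling exchangeable pair, the regression identity obtained by inserting the Taylor expansion \eqref{taylorGBEGB} together with \eqref{2ABlGBEG}--\eqref{4ABlGBEG}, and a case analysis of the competing exponents. But the abstract bound you propose to invoke is the wrong one for the non-Gaussian cases (1) and (3), and this is not a cosmetic point. Your first error term, $\E\bigl|1-\tfrac{1}{2\lambda}\E[(W_\gamma-W_\gamma')^2\mid\omega]\bigr|$, does not tend to zero here: by exchangeability and \eqref{exchangepsi}, $\tfrac{1}{2\lambda}\E[(W_\gamma-W_\gamma')^2]=-\E[W_\gamma\psi(W_\gamma)]+\lambda^{-1}\E[W_\gamma R]$, and with $\psi$ carrying the model-determined coefficients $k/K_c(\beta_n)$ and $G^{(4)}_{\beta_n,K_n}(0)/(3!\,2\beta_nK_n)$ the quantity $-\E[W_\gamma\psi(W_\gamma)]$ converges to a constant that is not $1$. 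So the "plug-in" bound you wrote is $\Theta(1)$ and gives no rate. This is exactly the issue discussed in the remark after Theorem \ref{generaldensity2}: one must compare with the modified density $p_W$ built from $\E[W\psi(W)]$ (equivalently, renormalise $\lambda$ and $\psi$ so that $-\E[W\psi(W)]=1$), which is why the limiting coefficients in the statement are $(2\E W_\gamma^2)^{-1}$ and $(4\E W_{1/4}^4)^{-1}$; and the resulting bound \eqref{kolall2} replaces your first term by $\tfrac{d_2}{2\lambda}\,\mathrm{Var}\bigl(\E[(W_\gamma-W_\gamma')^2\mid W_\gamma]\bigr)^{1/2}$. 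You mention the matching of $a_1,a_2$ to the inverse moments only at the very end, but never reconcile it with the bound you actually plan to use.

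Two further gaps. First, once the correct bound is in place, the "variance-matching" contribution is not "standard": controlling $\mathrm{Var}\bigl(\E[(W_\gamma-W_\gamma')^2\mid W_\gamma]\bigr)$ is a genuine piece of the proof, resting on the conditional two-spin computations and the covariance estimate $\mathrm{Cov}(\omega_i^2,\omega_j^2)=\Oo(n^{-\min(4\gamma,1)})$ (Lemmas \ref{doppelspin} and \ref{covest}, which in turn need the uniform moment bounds of Lemma \ref{MomBEG}, proved via a Hubbard--Stratonovich transformation rather than directly from the LDP); your plan omits this entirely. Second, your exponent bookkeeping is internally inconsistent: $\lambda$ is not of order $n^{-1}$ but $n^{-(1+2\gamma)}$ in the non-Gaussian cases and $n^{-(1+\Delta_2)}$ in case (2), and with your stated $\lambda$ the third-moment term would be $n^{3\gamma-2}$, not $n^{3\gamma-1}$ (similarly the rescaled quartic coefficient is $n^{1-4\gamma}$, not $n^{4\gamma-1}$). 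The final piecewise rates you quote agree with the theorem, but they are asserted rather than derived; in the paper they come out of estimating $\lambda^{-1}\sqrt{\E[\tilde R^2]}$ (with the discarded Taylor term and the $\Oo(n^{\gamma-2})$ remainder of Lemma \ref{rest} absorbed into $\tilde R$) against the variance and $A$-terms of \eqref{kolall2} or \eqref{mainbound3}, and that accounting is precisely where the phase-transition breakpoints $\gamma=1/3$ and $\Delta_2=3/4$ appear.
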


\begin{remark}\label{v}
We observe that the limit theorems depend on the value of $\gamma$ and on $K_n$ through the speed $\Delta_2$. 
In the first case (1) the limit-density is $\exp(-a_1 x^2 - a_2 x^4)$ and hence the case is influenced by regions A and B and the rate is $n^{- \frac 14}$.
The case corresponds to the critical speed $\Delta_2 = 1/2$. The coefficient $a_1$ depends on the sign of $k \not=0$ and hence yields two different limit densities, whereas $a_2 >0$ (both can be seen from the proof). 

In the second case, the limit-density is $\exp(- a_1 x^2)$ with $a_1>0$. With $\Delta_2 \in (0, 1/2)$ it corresponds to a slow convergence of $K_n$
to $K_c(\beta)$. In this case only region $A$ influences the form of the limiting density. But we consider converging in distribution to a normal distribution even though the non-classical scaling is given by $n^{1 - \gamma}$ with $\gamma \in (1/4, 1/2)$. Now we consider an {\it additional phase transition phenomenon}, since the rate of convergence depends on $\gamma$: the breakpoint is $\gamma=1/3$ and the more classical the scaling the better the rate of convergence. In other words we see that for $\gamma \in (1/4,1/3]$ the influence of region $A$ is getting weaker and weaker in the sense of a slower
rate of convergence. 
  
In the third case the limit-density is $\exp(-a_2 x^4)$ for any $k \in \N$.  This case is linked to an influence of $B$. Here the speed $\Delta_2$
is most rapid. Anyhow, an {\it additional phase transition phenomenon} occurs: the rate of convergence depends on $\Delta_2$ and is getting best for $\Delta_2 \geq 3/4$. So if we choose a speed $\Delta_2 \geq 3/4$ we obtain the best rate $n^{-1/4}$ or in other words: we can force the speed to obtain a rate of convergence which is optimal in comparison to the observation in Theorem \ref{Tfixed}. For any $\Delta_2 \in (1/2, 3/4)$ we have the rate $n^{-(\Delta_2-1/2)}$.

Summarising we consider 6 different cases with 4 different limit densities (compare with Figure 4 on page 530 in \cite{CosteniucEllis:2007}). In 5 cases (with the help of a certain speed up of $K_n$) we obtain the same rates of convergence as for fixed $(\beta,K)$, see Theorem \ref{Tfixed}. A phase transition phenomena persists in the case of a non-classical scaling. 
\end{remark}

\begin{remark}
As shown in \cite[Theorem 6.1]{CosteniucEllis:2007} we focus only on the case of $v=0$. If $v>0$ one is not able to obtain any limit theorem. For $v<0$ the authors in \cite{CosteniucEllis:2007} obtain moderate deviation principles for the total spin per site. The fact that $v$ is required to be zero becomes evident in Lemma \ref{MomBEG}. 
\end{remark}

\subsection{Thirtytwo rates of convergence for $(\beta_n,K_n) \to (\beta_c, K_c(\beta_c))$}

The last limit theorem of this section correspond to the case that the sequence $(\beta_n,K_n)$ converges to the tricritical point $(\beta_c,K_c(\beta_c))=(\log 4,3/[2\log 4])$. Here we also need the sequence $\beta_n\rightarrow \beta$ taken from \eqref{BetanBEG}.

\begin{theorem}\label{nC}

Let $\beta_n$ and $K_n$ be the sequences defined in \eqref{BetanBEG} and \eqref{KnBEG}. 
Then $(\beta_n,K_n)\rightarrow(\beta_c,K_c(\beta_c))$. Let $W_{\gamma}$ be defined in \eqref{Wg}. Given $\gamma\in [1/6,1/2]$ we assume that 
$$
w=\min(2\gamma+\Delta_2-1, 4\gamma+\Delta_1-1, 6\gamma-1)=0.
$$ 
Then, if the random variable $Z_{b_1, b_2, b_3}$ is distributed according to the probability measure on $\R$ with the density
\begin{align}\label{fnC}
f_{b_1,b_2,b_3}(x):= C \cdot \exp \left( - \left( \delta(w, 2\gamma+\Delta_2-1) b_1 x^2+\delta(w, 4\gamma+\Delta_1-1)b_2 x^4+\delta(w, 6\gamma-1) b_3 x^6
\right) \right)
\end{align}
for certain constants $b_1, b_2, b_3$ and $C$.

\begin{enumerate}
\item If $\gamma = 1/6$ and $\Delta_1 = 1/3$, $\Delta_2 = 2/3$ there exists explicit constants $b_1, b_2, b_3 \not=0$ (depending on $\beta_n$, $K_n$, $\E W_{1/6}^i$ with $i \in \{2,4,6\}$ presented explicitly in the proof) such that for a constant $C$
\begin{align*}
\sup\limits_{t\in\R}\big| P\bigl(W_{1/6}\leq t\bigr) - P\bigl(Z_{b_1, b_2, b_3} \leq t\bigr) \big| \leq C n^{-1/6}.
\end{align*}
\item If $2 \gamma = 1 - \Delta_2$, $\gamma \in (1/4, 1/2)$, $\Delta_2 \in (0, 1/2)$ and $\Delta_1 >0$ we take $b_1 = (2 \E(W_{\gamma}^2))^{-1}>0$ and $b_2=b_3=0$ and obtain for a constant $C$
\begin{align*}
\sup\limits_{t\in\R}\big| P\bigl(W_{\gamma}\leq t\bigr) - P\bigl(Z_{b_1, b_2, b_3} \leq t\bigr) \big| \leq C \begin{cases} n^{1 - 4 \gamma-\Delta_1} &, \gamma \in (1/4, 1/3], \Delta_1 \in (0, 1 - 3 \gamma), \\ n^{-\gamma} &, \gamma \in (1/4,1/3],
\Delta_1 \geq 1 - 3 \gamma, 
\\ n^{-\gamma} &, \gamma \in [1/3, 1/2). \end{cases}
\end{align*}
\item If $2 \gamma = 1 - \Delta_2$, $\gamma \in (1/6, 1/4]$, $\Delta_2 \in [1/2, 2/3)$ and $\Delta_1 >2 \Delta_2 -1$ we take $b_1 = (2 \E(W_{\gamma}^2))^{-1}>0$ and $b_2=b_3=0$ and obtain for a constant $C$
\begin{align*}
\sup\limits_{t\in\R}\big| P\bigl(W_{\gamma}\leq t\bigr) - P\bigl(Z_{b_1, b_2, b_3} \leq t\bigr) \big| \leq C \begin{cases} n^{1 - 4 \gamma-\Delta_1} &, \gamma \in (1/6, 1/5], 1- 4 \gamma < \Delta_1 <  2 \gamma, \\ n^{1- 6 \gamma} &, \gamma \in (1/6,1/5], \Delta_1 \geq 2 \gamma, \\
n^{1 - 4 \gamma-\Delta_1} &, \gamma \in [1/5, 1/4], 1- 4 \gamma < \Delta_1 <  1 - 3 \gamma, \\
n^{-\gamma} &, \gamma \in [1/5, 1/4], \Delta_1 \geq 1 - 3 \gamma. \end{cases}
\end{align*}
\item If $\gamma = 1/6$, $\Delta_1 > 1/3$ and $\Delta_2 > 2/3$ we take $b_1 =b_2= 0$ and $b_3 = (6 \E(W_{1/6}^6))^{-1}>0$ and obtain for a constant $C$
\begin{align*}
\sup\limits_{t\in\R}\big| P\bigl(W_{1/6}\leq t\bigr) - P\bigl(Z_{b_1, b_2, b_3} \leq t\bigr) \big| \leq C \begin{cases} n^{1/3 - \Delta_1} &, \Delta_1 \in (1/3,1/2), \Delta_2 \in (2/3, 5/6), \\ & \,\, \,\Delta_1 \leq \Delta_2 - 1/3, \\ n^{2/3 - \Delta_2} &, \Delta_1 \in (1/3,1/2), \Delta_2 \in (2/3, 5/6), \\ & \,\,\,\Delta_1 > \Delta_2 - 1/3,\\ 
n^{1/3 - \Delta_1} &, \Delta_1 \in (1/3, 1/2), \Delta_2 \geq 5/6, \\
n^{2/3 - \Delta_2} &, \Delta_1 \geq 1/2, \Delta_2 \in (2/3, 5/6), \\
n^{-1/6} &, \Delta_1 \geq 1/2, \Delta_2 \geq 5/6. \end{cases}
\end{align*}
\item If $4 \gamma = 1 - \Delta_1$, $\gamma \in (1/6,1/4)$, $\Delta_1 \in (0, 1/3)$ and $2 \Delta_2 > \Delta_1 +1$ we take $b_1 =b_3= 0$ and $b_2 = (4 \E(W_{\gamma}^4))^{-1}>0$ and obtain for a constant $C$
\begin{align*}
\sup\limits_{t\in\R}\big| P\bigl(W_{\gamma}\leq t\bigr) - P\bigl(Z_{b_1, b_2, b_3} \leq t\bigr) \big| \leq C \begin{cases} n^{1 - 2 \gamma-\Delta_2} &, \gamma \in (1/6, 1/5), \Delta_2 < 4 \gamma, \\ n^{1- 6 \gamma} &, \gamma \in (1/6,1/5), \Delta_2 \geq 4 \gamma, \\
n^{1 - 2 \gamma-\Delta_2} &, \gamma \in [1/5, 1/4), \Delta_2 <  1 - \gamma, \\
n^{-\gamma} &, \gamma \in [1/5, 1/4), \Delta_2 \geq 1 - \gamma. \end{cases}
\end{align*}
\item If $\gamma = 1/6$, $\Delta_1 =1/3$ and $\Delta_2 > 2/3$ we take $b_1=0$ and $b_2, b_3 \not= 0$  (depending on $\beta_n$, $K_n$, $\E W_{1/6}^i$ with $i \in \{4,6\}$ presented explicitly in the proof) and obtain for a constant $C$
\begin{align*}
\sup\limits_{t\in\R}\big| P\bigl(W_{1/6}\leq t\bigr) - P\bigl(Z_{b_1, b_2, b_3} \leq t\bigr) \big| \leq C \begin{cases} n^{-(\Delta_2-2/3)} &, \Delta_2 \in (2/3, 5/6),
\\ n^{-1/6} &, \Delta_2 \geq 5/6. \end{cases}
\end{align*}
\item If $\gamma = 1/6$, $\Delta_1 >1/3$ and $\Delta_2 = 2/3$ we take $b_2=0$ and $b_1, b_3 \not= 0$  (depending on $\beta_n$, $K_n$, $\E W_{1/6}^i$ with $i \in \{2,6\}$ presented explicitly in the proof) and obtain for a constant $C$
\begin{align*}
\sup\limits_{t\in\R}\big| P\bigl(W_{1/6}\leq t\bigr) - P\bigl(Z_{b_1, b_2, b_3} \leq t\bigr) \big| \leq C \begin{cases} n^{-(\Delta_1-1/3)} &, \Delta_1 \in (1/3, 1/2),
\\ n^{-1/6} &, \Delta_1 \geq 1/2. \end{cases}
\end{align*}
\item If $4 \gamma = 1 - \Delta_1$, $\gamma \in (1/6, 1/4)$, $\Delta_1 \in(0,1/3)$ and $2 \Delta_2 = \Delta_1 +1$ we take $b_3=0$ and $b_1, b_2 \not= 0$  (depending on $\beta_n$, $K_n$, $\E W_{1/6}^i$ with $i \in \{2,4\}$ presented explicitly in the proof) and obtain for a constant $C$
\begin{align*}
\sup\limits_{t\in\R}\big| P\bigl(W_{\gamma}\leq t\bigr) - P\bigl(Z_{b_1, b_2, b_3} \leq t\bigr) \big| \leq C 
\begin{cases} n^{1 - 6 \gamma} &, \gamma \in (1/6, 1/5],
\\ n^{-\gamma} &, \gamma \in [1/5, 1/4). \end{cases}
\end{align*}
\end{enumerate}
\end{theorem}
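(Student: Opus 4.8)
The plan is to run the same exchangeable-pair version of Stein's method that underlies Theorems \ref{Tfixed} and \ref{nB}, now fed with the sextic expansion \eqref{taylorGBEGC} together with the substitutions \eqref{2ABlGBEG}--\eqref{4ABlGBEG}; the eight cases with their sub-cases then drop out of a single bookkeeping step at the very end.

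First I would construct the exchangeable pair. Sample $\omega$ from $P_{\beta_n,K_n,n}$, pick $I$ uniformly on $\{1,\dots,n\}$ independently, and replace $\omega_I$ by a fresh draw $\omega_I'$ from the conditional law of $\omega_I$ given $(\omega_j)_{j\neq I}$; set $S_n'=S_n-\omega_I+\omega_I'$ and $W_\gamma'=S_n'/n^{1-\gamma}$. The single-site conditional law is $\rho_{\beta_n}$ tilted by $\exp\!\bigl(\tfrac{\beta_nK_n}{n}\omega^2+\tfrac{2\beta_nK_n}{n}S_{n,-I}\,\omega\bigr)$, so that $\E[\omega_I'\mid(\omega_j)_{j\neq I}]=c_{\beta_n}'(2\beta_nK_nS_n/n)+\Oo(1/n)$ uniformly; combining this with the identity $G_{\beta,K}'(x)=2\beta K\bigl(x-c_\beta'(2\beta Kx)\bigr)$ underlying Lemma \ref{aquMG} yields a linearity-type regression
\begin{align*}
\E\bigl[W_\gamma-W_\gamma'\mid\mathcal F\bigr]=\lambda_n\bigl(\psi_n(W_\gamma)+R_n\bigr),\qquad
\lambda_n:=\frac{1}{2\beta_nK_n\,n^{2-2\gamma}},\quad \psi_n(x):=n^{1-\gamma}G_{\beta_n,K_n}'\!\bigl(x/n^{\gamma}\bigr),
\end{align*}
with $\E|R_n|=\Oo(n^{-\gamma})$ absorbing the $\Oo(1/n)$ corrections. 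Inserting \eqref{taylorGBEGC} and then \eqref{2ABlGBEG}, \eqref{4ABlGBEG} gives
\begin{align*}
\psi_n(x)=kC_n^{(2)}n^{-(2\gamma+\Delta_2-1)}x+\tfrac16 bC_n^{(4)}n^{-(4\gamma+\Delta_1-1)}x^3+\tfrac1{120}G_{\beta_n,K_n}^{(6)}(0)\,n^{-(6\gamma-1)}x^5+\Oo\!\bigl(n^{1-7\gamma}x^6\bigr).
\end{align*}

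Because $w=\min(2\gamma+\Delta_2-1,4\gamma+\Delta_1-1,6\gamma-1)=0$, exactly those monomials whose exponent attains the minimum survive in the limit, with coefficients converging (through $C_n^{(2)}\to2\beta_c$, $C_n^{(4)}\to9/2$ and the limit of $G^{(6)}(0)$) to the constants occurring in $f_{b_1,b_2,b_3}$ of \eqref{fnC}, while the non-surviving monomials carry coefficients of respective size $n^{-(2\gamma+\Delta_2-1)}$, $n^{-(4\gamma+\Delta_1-1)}$, $n^{-(6\gamma-1)}$. I would then invoke the abstract Stein bound for target densities $\propto\exp(-\Psi)$, $\Psi$ an even polynomial with positive leading coefficient (Section~3, in the spirit of \cite{Eichelsbacher/Loewe:2010}): in the cases with several surviving powers one compares $W_\gamma$ with $\exp(-\Psi_n)$ assembled from the \emph{finite-$n$} coefficients above (so that $\Psi_n$ inherits the moments $\E W_\gamma^{2i}$), and in the cases with one surviving power one compares with the pure limit density, whose coefficient is $(2i\,\E W_\gamma^{2i})^{-1}$ for the surviving monomial $x^{2i}$. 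Assembling the estimate needs: the regression above; the uniform moment bounds $\E|W_\gamma|^p=\Oo(1)$, i.e.\ Lemma \ref{MomBEG} -- and this is precisely the place where $w=0$ enters, since $w>0$ flattens the effective potential and ruins tightness; the concentration of $\E[(W_\gamma-W_\gamma')^2\mid\mathcal F]$ about $2\lambda_n$, whose fluctuation is governed by the weakly-concentrated magnitude field $\sum_j\omega_j^2$; and the elementary jump bound $\E|W_\gamma-W_\gamma'|^3=\Oo(n^{-3(1-\gamma)})$. The Kolmogorov distance then comes out bounded by the maximum of $n^{-(2\gamma+\Delta_2-1)}$, $n^{-(4\gamma+\Delta_1-1)}$, $n^{-(6\gamma-1)}$ over the non-surviving powers, the remainder term $\Oo(n^{-\gamma})$, and the intrinsic Stein errors (which are of order at most $n^{-\gamma}$); evaluating this maximum under the arithmetic constraints of (1)--(8) separately reproduces exactly the stated rates -- cases (1), (6), (7), (8) being the ones with two or three surviving powers and hence no non-surviving contribution, in agreement with the explicit $b_i\neq0$ listed there.

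The main obstacle is the uniform control of $\E[(W_\gamma-W_\gamma')^2\mid\mathcal F]$ and of Lemma \ref{MomBEG}: near the tricritical point the product measure $P_{n,\beta_n}$ is itself $\beta_n$-dependent and degenerating, and $K_n\to K_c$ puts the pair $\bigl(S_n/n,\tfrac1n\sum_j\omega_j^2\bigr)$ at the edge of a Gaussian-type singularity, so the Laplace-type asymptotics $P_{\beta_n,K_n,n}(S_n/n\in dx)\asymp e^{-nG_{\beta_n,K_n}(x)}\,dx$ and the conditional second moment have to be handled uniformly in $n$ by means of the sixth-order expansion \eqref{taylorGBEGC}. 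Everything after that is the lengthy but routine case analysis sketched above.
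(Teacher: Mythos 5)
Your proposal follows essentially the same route as the paper's proof: the same Gibbs-sampling exchangeable pair, the same regression identity built from $G_{\beta_n,K_n}^{(1)}$ via the sextic expansion \eqref{taylorGBEGC} together with \eqref{2ABlGBEG}--\eqref{4ABlGBEG}, the same split into surviving monomials (defining $\psi$ and the comparison density, with finite-$n$ coefficients in the mixed cases and moment-matched coefficients $(2i\,\E W_\gamma^{2i})^{-1}$ in the pure ones) versus non-surviving monomials absorbed into the remainder, and the same concluding minimization of $\gamma$ against the non-surviving exponents $2\gamma+\Delta_2-1$, $4\gamma+\Delta_1-1$, $6\gamma-1$. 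This bookkeeping reproduces exactly the paper's case-by-case rates, so the approach is correct and coincides with the paper's.
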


\begin{remark}
We observe that the limit theorems depend on the value of $\gamma$ and on $\beta_n$ and $K_n$ through the speeds $\Delta_1$, $\Delta_2$. 
In the first case (1) the limit-density is $\exp(-b_1 x^2 - b_2 x^4 -b_3 x^6)$ and hence the case is influenced by regions A, B and C and the rate is $n^{- \frac 16}$. The case corresponds to the critical speeds $\Delta_1 = 1/3$ and $\Delta_2 = 2/3$. The coefficient $b_1$ depends on the sign of $k \not=0$, the
coefficient $b_2$ on the sign of $b \not= 0$ and hence yields 4 different limit densities, whereas $b_3 >0$ (both can be seen from the proof). 

The second case should be compared with the second case of Theorem \ref{nB}: in addition to the conditions in Theorem \ref{nB} (2), we assume that $\beta_n$ converges
with speed $\Delta_1$ to $\beta_c$.  Again we consider convergence in distribution to a normal distribution even though the non-classical scaling is given by $n^{1 - \gamma}$ with $\gamma \in (1/4, 1/2)$ and again the speed is $n^{-\gamma}$ for any $\gamma \in [1/3, 1/2)$, independent of $\Delta_1$.
But for $\gamma < 1/3$ the rate of convergence is of order $n^{1-4 \gamma - \Delta_1}$ and hence slower than in region $B$. But if we speed up
$\beta_n$ in choosing $\Delta_1 \geq 1 - 3 \gamma$ the rate of convergence is $n^{-\gamma}$ and hence faster than in region $B$. In total there are 3 subcases.

The third case reads as follows. Under the same relation $2 \gamma = 1 - \Delta_2$ as in case (2), with a speed up of $\Delta_1$ and $\Delta_2$ it
is possible to observe normal convergence even for the scaling $\gamma \in (1/6, 1/4]$. For $\gamma \geq 1/5$ it is possible to obtain the rate $n^{-\gamma}$
if we speed up $\beta_n$, for $\gamma \leq 1/5$ a speed up of $\beta_n$ implies the rate $n^{1-6 \gamma}$, which could have been expected in comparison to Theorem \ref{nB} (2). Case (2) and (3) are linked to an influence of $A$. In total we have 4 subcases.

The fourth and fifth case are linked to the limiting densities $\exp(-b_3 x^6)$ (influenced only by $C$) and $\exp(-b_2 x^4)$ (influenced only by region $B$), respectively.
In case (4) a speed up of both $K_n$ and $\beta_n$ leads to the rate $n^{-1/6}$, in case (5) the result is comparable with case (3): a certain
speed up leads to rate $n^{-\gamma}$ or $n^{1 - 6 \gamma}$ depending on $\gamma$. Interesting enough one obtains converging in distribution 
to $\exp(-b_2 x^4)$ even though the non-classical scaling is given by $n^{1 - \gamma}$ with $\gamma \in (1/6, 1/4)$, which is comparable with case
(2) and (3). Summarising we have 9 subcases.

The last three cases (6), (7) and (8) are linked to limiting densities $\exp(-b_2 x^4 -b_3 x^6)$ (influenced by $B$ and $C$), $\exp(-b_1 x^2 -b_3 x^6)$ (influenced by region $A$ and $C$) and $\exp(-b_1 x^2 -b_2 x^4)$ (influenced by $A$ and $B$). Cases (6) and (7) are comparable with case (3) in Theorem \ref{nB}: a certain speed up of $\Delta_1$ and $\Delta_2$, respectively, leads to the rate $n^{-1/6}$. Finally case (8) is comparable with case (2) in Theorem \ref{nB} with a non-classical scaling. In all three cases one of the nonzero parameters $b_i$ depends on the sign of $k \not=0$ and $b \not =0$, respectively, and hence yields two different limit densities. Hence we have 4 cases each. 
\end{remark}

\begin{remark}
As shown in \cite[Theorem 7.1]{CosteniucEllis:2007} we focus only on the case of $w=0$. If $w>0$ one is not able to obtain any limit theorem. For $w<0$ the authors in \cite{CosteniucEllis:2007} obtain moderate deviation principles for the total spin per site. The fact that $w$ is required to be zero becomes important in Lemma \ref{MomBEG}. 
\end{remark}

Summarising we consider 32 different cases with 13 different limit densities (compare with Table IV. on page 536 in \cite{CosteniucEllis:2007}). 
In all but three cases (with the help of a certain speed up of $\beta_n$ and $K_n$, respectively) we obtain the same rates of convergence as for fixed $(\beta,K)$, see Theorem \ref{Tfixed}. A phase transition phenomena persists in 
three cases of non-classical scalings: cases (3), (5) and (8).

Thus, combining the theorems above, we have 42 limit theorems depending on the values of $(\beta,K)$. Their proofs can be found in Section 4.

\newpage
\section{Stein's method}

Stein introduced in \cite{Stein:1986} the exchangeable pair approach. Given a random variable $W$, Stein's method
is based on the construction of another variable $W'$ (some coupling) such that the pair $(W, W')$ is exchangeable, i.e. their
joint distribution is symmetric.  A theorem of Stein (\cite[Theorem 1, Lecture III]{Stein:1986})
shows that a measure of proximity of $W$ to normality may be provided in terms
of the exchangeable pair, requiring $W'-W$ to be sufficiently small. He assumed the condition
$$
\E(W'|W)=(1-\lambda) \, W
$$
for some $0 < \lambda <1$. 
Heuristically, this condition can be understood as a linear regression condition: if $(W,W')$ were bivariate normal with correlation $\varrho$, then
$\E [W' |W] = \varrho \, W$ and the condition would be satisfied with $\lambda = 1 - \varrho$.
Stein's approach has been successfully applied in many models, see e.g. \cite{Stein:1986} or
\cite{DiaconisStein:2004} and references therein. In \cite{Rinott/Rotar:1997}, the range of application was
extended by replacing the linear regression property by a weaker condition.
We consider Stein's method by replacing the linear regression property by
\begin{equation} \label{exchangepsi}
\E(W'|W) = W + \lambda \, \psi(W) - R(W),
\end{equation}
where $\psi(x)$ depends on a continuous distribution under consideration and $R(W)$ is a remainder term.
Recently in \cite{Eichelsbacher/Loewe:2010} and \cite{Doebler:2012} the exchangeable pair approach was extended to more absolutely continuous univariate distributions with a nice collection of new applications.

Given two random variables $X$ and $Y$ defined on a common probability space, we denote the Kolmogorov distance of the distributions of $X$ and $Y$ by
$$
d_{\rm{K}}(X,Y) := \sup_{z \in \R} | P(X \leq z) - P(Y \leq z)|.
$$
Let $I=(a,b)$ be a real interval, where $-\infty \leq a < b \leq \infty$. A function is called {\it regular}
if $f$ is finite on $I$ and, at any interior point of $I$, $f$ possesses a right-hand limit and a left-hand limit. Further, $f$ possesses
a right-hand limit $f(a+)$ at the point $a$ and a left-hand limit $f(b-)$ at the point $b$.
Let us assume, that the regular density $p$ satisfies the following condition:

{\bf Assumption (D)}
Let $p$ be a regular, strictly positive density on an interval $I=[a,b]$. Suppose $p$ has a derivative $p'$ that is
regular on $I$, has only countably many sign changes, and is continuous at the sign changes. Suppose moreover
that $\int_I p(x) | \log(p(x))| \, dx < \infty$
and that
$\psi(x) := \frac{p'(x)}{p(x)}$ is regular.

In \cite{DiaconisStein:2004} it is proved, that a random variable $Z$ is distributed according to the density $p$ if and only if
$
\E \bigl( f'(Z) + \psi(Z) \, f(Z) \bigr) = f(b-) \, p(b-) - f(a+) \, p(a+)
$
for a suitably chosen class $\mathcal{F}$ of functions $f$. 
The corresponding Stein identity is
\begin{equation} \label{steinid2}
f'(x) + \psi(x) \, f(x) = h(x) - P(h),
\end{equation}
where $h$ is a measurable function for which $\int_I |h(x)|\, p(x) \, dx < \infty$, $P(x) := \int_{-\infty}^x p(y) \, dy$
and $P(h) := \int_I h(y) \, p(y)\, dy$.
The solution $f:=f_h$ of this differential equation is given by
\begin{equation} \label{solution}
f(x) = \frac{ \int_a^x \bigl( h(y) - Ph \bigr) \, p(y) \, dy}{p(x)}.
\end{equation}
For the function $h(x) := 1_{\{x \leq z\}}(x)$ let $f_z$ be the corresponding solution of \eqref{steinid2}.

{\bf Assumption (B)}
Let $p$ be a density fulfilling Assumption (D) 
We assume that the solution $f_z$ of
$
f_z'(x) + \psi(x) \, f_z(x) = 1_{\{x \leq z\}}(x) - P(z)
$
satisfies
$$
|f_z(x)| \leq d_1, \quad |f_z'(x)| \leq d_2 \quad \text{and} \quad
|f_z'(x)-f_z'(y) | \leq d_3
$$
and
\begin{equation} \label{addcond}
|(\psi(x) \, f_z(x))'| = \bigl| ( \frac{p'(x)}{p(x)} \, f_z(x))' \bigr| \leq d_4
\end{equation}
for all real $x$ and $y$, where $d_1, d_2, d_3$ and $d_4$ are constants.

We will apply the following results proved in \cite{Eichelsbacher/Loewe:2010}.
Let $p_W$ be a probability density such that a random variable $Z$ is distributed according to $p_W$ if and only
if $
\E \bigl( \E[W \psi(W)] \, f'(Z) + \psi(Z) \, f(Z) \bigr) =0
$
for a suitably chosen class of functions.

\begin{theorem} (see Theorem 2.5 in \cite{Eichelsbacher/Loewe:2010}) \label{generaldensity2}
Let $p$ be a density fulfilling Assumption (D).
Let $(W, W')$ be an exchangeable pair of random variables such that \eqref{exchangepsi} holds with respect to $p$ ($\psi= p'/p$).
If $Z_W$ is a random variable distributed according to $p_W$, we assume that 
the solutions $f_z$ of $ \E[W \psi(W)] \, f'(x) + \psi(x) \, f(x)  = 1_{\{x \leq z\}}(x) -P(z)$ fulfill Assumption (B). Then
for any $A >0$ one has
\begin{eqnarray} \label{kolall2}
d_{\rm{K}}(W, Z_W) & \leq & \frac{d_2}{2 \lambda} \bigl( {\rm Var} \bigl( \E [ (W-W')^2| W] \bigr)^{1/2} +
\big( d_1 + d_2 \sqrt{\E(W^2)} +\frac 32 A \bigr)  \frac{\sqrt{\E(R^2)}}{\lambda} \nonumber \\
& + &  \frac{1}{\lambda} \bigl( \frac{d_4 A^3}{4} \bigr) + \frac{3A}{2} \E (|\psi(W)|)
 +   \frac{d_3}{2 \lambda} \E \bigl( (W-W')^2 1_{\{|W-W'| \geq A\}} \bigr).
\end{eqnarray}
\end{theorem}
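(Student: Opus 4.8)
\medskip
\noindent\emph{Proof strategy.}
The plan is the standard exchangeable-pair version of Stein's method, with the (possibly non-normal) target entering through $\psi$. Fix $z\in\R$ and let $f=f_z$ be the solution, obtained from the analogue of \eqref{solution}, of the Stein equation $\E[W\psi(W)]\,f'(x)+\psi(x)\,f(x)=1_{\{x\le z\}}(x)-P(z)$; by Assumption (B) it satisfies $|f|\le d_1$, $|f'|\le d_2$, $|f'(x)-f'(y)|\le d_3$ and $|(\psi f)'|\le d_4$ (cf.\ \eqref{addcond}). Since $Z_W$ is characterised by $\E\bigl(\E[W\psi(W)]\,f'(Z_W)+\psi(Z_W)\,f(Z_W)\bigr)=0$, evaluating the Stein equation at $W$, taking expectations and subtracting the corresponding identity for $Z_W$ reduces the problem to bounding $\E\bigl[\E[W\psi(W)]\,f'(W)+\psi(W)\,f(W)\bigr]$ uniformly in $z$. (To treat the Kolmogorov distance one works with a Lipschitz approximant of $1_{\{\cdot\le z\}}$ at scale $A$, whose bias is of order $A$ times the total variation of the target density, i.e.\ of order $A\,\E(|\psi(W)|)$; this accounts for the term $\tfrac{3A}{2}\,\E(|\psi(W)|)$.)

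\smallskip
Next I would trade $\psi(W)$ for the increment $W'-W$. Rearranging \eqref{exchangepsi} as $\lambda\,\psi(W)=\E[W'-W\mid W]+R(W)$ and taking conditional expectations gives $\lambda\,\E[\psi(W)f(W)]=\E[(W'-W)f(W)]+\E[R(W)f(W)]$. Exchangeability of $(W,W')$ allows the antisymmetrisation $\E[(W'-W)f(W)]=\tfrac12\,\E[(W'-W)(f(W)-f(W'))]$, and a first-order Taylor expansion $f(W')=f(W)+(W'-W)f'(W)+\rho(W,W')$ then yields $\E[(W'-W)f(W)]=-\tfrac12\,\E[(W-W')^2f'(W)]-\tfrac12\,\E[(W'-W)\rho(W,W')]$. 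Writing $\E[(W-W')^2\mid W]=\E[(W-W')^2]+S(W)$ with $\E[S(W)]=0$ splits $\E[(W-W')^2f'(W)]$ into the constant multiple $\E[(W-W')^2]\,\E[f'(W)]$ and a fluctuation; by Cauchy--Schwarz and $|f'|\le d_2$ the latter is bounded by $\tfrac{d_2}{2\lambda}\,{\rm Var}\bigl(\E[(W-W')^2\mid W]\bigr)^{1/2}$. Expanding $\E[WW']=\E[W\,\E(W'\mid W)]$ through \eqref{exchangepsi} gives the key identity $\E[(W-W')^2]+2\lambda\,\E[W\psi(W)]=2\,\E[WR(W)]$, which is precisely what makes the constant part cancel the $\E[W\psi(W)]\,f'$ contribution of the Stein operator, up to an error $\tfrac1\lambda|\E[WR(W)]|\le\tfrac1\lambda\sqrt{\E(W^2)}\sqrt{\E(R^2)}$. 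Together with $|\E[R(W)f(W)]|\le d_1\sqrt{\E(R^2)}$ and the $R$-contribution to the smoothing step, this assembles the term $\bigl(d_1+d_2\sqrt{\E(W^2)}+\tfrac32A\bigr)\tfrac{\sqrt{\E(R^2)}}{\lambda}$.

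\smallskip
The main obstacle is the Taylor remainder $-\tfrac1{2\lambda}\,\E[(W'-W)\rho(W,W')]$, because $f=f_z$ solves only a first-order ODE whose right-hand side jumps at $z$: consequently $f_z'$ has a jump of size $|\E[W\psi(W)]|^{-1}$ at $z$ and $f_z''$ need not be bounded. Differentiating the Stein equation shows that away from $z$ one has $\E[W\psi(W)]\,f_z''=-(\psi f_z)'$, so the absolutely continuous part of $f_z''$ is controlled by $d_4$ while the singular part contributes only the unit jump at $z$. I would therefore split the expectation according to $\{|W-W'|<A\}$ and $\{|W-W'|\ge A\}$. On the small-increment event $|\rho(W,W')|$ is of order $d_4\,(W-W')^2$ plus a jump contribution, and using the crude estimate $|W-W'|^3\le A^3$ there (and absorbing the jump term into the smoothing step) one obtains $\tfrac1\lambda\cdot\tfrac{d_4A^3}{4}$; on the large-increment event the crude bound $|f_z(W)-f_z(W')|\le d_3|W-W'|$ gives $\tfrac{d_3}{2\lambda}\,\E\bigl((W-W')^2\,1_{\{|W-W'|\ge A\}}\bigr)$. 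Summing all contributions and taking the supremum over $z$ delivers \eqref{kolall2}, the free parameter $A>0$ being left for optimisation in the individual applications. The two delicate points are the exact cancellation in the second step --- which is exactly what forces the Stein operator to be calibrated through $\E[W\psi(W)]$ --- and the handling of the discontinuity of $f_z'$, which is why the increment must be truncated at the scale $A$.
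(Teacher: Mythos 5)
Your overall skeleton does match the proof of this result in \cite{Eichelsbacher/Loewe:2010} (the present paper only quotes it and gives no proof): the antisymmetrisation $\E[(W'-W)f(W)]=\tfrac12\E[(W'-W)(f(W)-f(W'))]$, the identity $\E[(W-W')^2]=-2\lambda\E[W\psi(W)]+2\E[WR(W)]$ that produces the cancellation against the $\E[W\psi(W)]f_z'(W)$ part of the Stein operator, the Cauchy--Schwarz step giving $\tfrac{d_2}{2\lambda}\bigl({\rm Var}\bigl(\E[(W-W')^2\mid W]\bigr)\bigr)^{1/2}$, the $d_1$- and $d_2\sqrt{\E(W^2)}$-contributions of $R$, and the $d_3$- and $d_4 A^3$-terms obtained by splitting the Taylor remainder at $|W-W'|=A$ are all correct and are exactly the steps used there.

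There is, however, a genuine gap at the most delicate point: the contribution of the jump of $f_z'$ at $z$, i.e.\ the term $\tfrac{1}{2\lambda}\,\E\bigl[(W'-W)\int_0^{W'-W}\bigl(1_{\{W+t\le z\}}-1_{\{W\le z\}}\bigr)\,dt\bigr]$ (up to the factor $\E[W\psi(W)]^{-1}$). You propose to dispose of it by smoothing the indicator at scale $A$, asserting that the bias is of order $A$ times the total variation of the target density, ``i.e.\ of order $A\,\E(|\psi(W)|)$''. That identification is incorrect: $\int|p_W'(x)|\,dx$ is an expectation of $|\psi|$ under the \emph{target} law $p_W$, whereas the bound in \eqref{kolall2} involves $\E|\psi(W)|$ under the law of the actual (here discrete) variable $W$. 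Smoothing on the $W$-side instead requires a concentration bound for $P(z\le W\le z+A)$, which is precisely what must be proved; a crude bound on the indicator term only gives $O\bigl(\tfrac1\lambda\E(W-W')^2\bigr)=O(1)$, so it cannot simply be ``absorbed into the smoothing step''. In the actual argument this term is controlled by applying the regression identity \eqref{exchangepsi} a second time to an auxiliary piecewise linear function $g$ with $g'$ equal to the indicator of an interval of length of order $A$ around $z$ and $\|g\|_\infty\le\tfrac32 A$; positivity of $(W'-W)\bigl(g(W')-g(W)\bigr)$ then yields a bound of the form $2\lambda\cdot\tfrac32A\,\E|\psi(W)|+2\cdot\tfrac32A\,\E|R|$, which is the common origin of both $\tfrac{3A}{2}\E|\psi(W)|$ and the $\tfrac32A\,\tfrac{\sqrt{\E(R^2)}}{\lambda}$ part of the bracket in \eqref{kolall2} --- note that these carry the same factor $\tfrac32 A$, a fingerprint your smoothing heuristic cannot reproduce (it would yield no $R$-contribution and a density bound for $Z_W$ rather than $\E|\psi(W)|$). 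A minor further point: away from $z$ one has $\E[W\psi(W)]f_z''=-(\psi f_z)'$, so \eqref{addcond} controls the absolutely continuous part of $f_z''$ only by $d_4/|\E[W\psi(W)]|$; this normalisation must be kept consistent with how Assumption (B) is applied to the modified equation.
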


\begin{remark}
In case the regression property \eqref{exchangepsi} is fulfilled with $\psi=p'/p$, we expect a comparison of the distribution of $W$ with $Z$ distributed according to the regular Lebesgue-density $p$. Why do we introduce the modified density $p_W$? The reason was already discussed in \cite{Eichelsbacher/Loewe:2010}:
If \eqref{exchangepsi} is fulfilled, on obtains that $\E(W - W')^2 = - 2 \lambda \E[W \psi(W)] + 2 \E [ W R(W) ]$. Comparing the distribution $W$ with
$Z$ distributed according to $p$ leads to a plug-in theorem (see \cite[Theorem 2.4]{Eichelsbacher/Loewe:2010}), where one has to estimate a term like
$$
\E \biggl| 1 - \frac{1}{2 \lambda} \E [ (W' -W)^2 | W] \biggr|.
$$
But with our observation $ \E \bigl(  1 - \frac{1}{2 \lambda} \E [ (W' -W)^2 | W] \bigr)= 1 + \E[W \psi(W)] - \frac{1}{\lambda} \E [ W R(W) ]$.
Therefore the bounds in Theorem 2.4 in 
\cite{Eichelsbacher/Loewe:2010} are only useful, if $- \E[W \psi(W)] $ is close to 1 and  $\frac{1}{\lambda} \E [ W R(W) ]$ is small. Alternatively, bounds can be obtained by comparing with a modified distribution that involves  $\E [ W \psi(W) ]$. This leads to $p_W$. Note that this is compatible with the quite
general approach introduced in \cite{Doebler:2012}.
\end{remark}

In the following corollary, we discuss the Kolmogorov-distance of the distribution of a random variable $W$ to a
random variable distributed according to $N(0, \E(W^2))$.

\begin{cor} (see Corollary 2.10 in \cite{Eichelsbacher/Loewe:2010})\label{corsigma}
Let $\sigma^2 >0$ and  $(W, W')$ be an exchangeable pair of real-valued random variables such that
\begin{equation} \label{3.2}
E(W' |W) = \bigl(1 - \frac{\lambda}{\sigma^2} \bigr) W -R(W)
\end{equation}
for some random variable $R(W)$ and with $0 < \lambda <1$. Assume that $\E(W^2)$ is finite and $|W-W'| \leq A$ for a constant $A$.
Let $Z_{W}$ denote a random variable distributed according to $N(0, \E(W^2))$. We obtain
\begin{eqnarray} \label{mainbound3}
d_{\rm{K}}(W, Z_{W}) & \leq & \frac{\sigma^2}{2 \lambda}  \bigl( {\rm Var} \bigl( \E[(W'-W)^2 |W] \bigr) \bigr)^{1/2}  +
\sigma^2 \biggl( \frac{\sqrt{\E(W^2)}\,  (\sqrt{2 \pi}+4) }{4} + 1.5 A \biggr) \frac{\sqrt{\E(R^2)}}{\lambda} \nonumber \\
& & \hspace{-2cm} + \sigma^2 \, \frac{A^3}{\lambda} \biggl(\frac{\sqrt{\E(W^2)} \, \sqrt{2 \pi}}{16} + \frac{\sqrt{\E(W^2)}}{4} \biggr)
+  \sigma^2 \, 1.5 A \, \sqrt{\E(W^2)}. 
\end{eqnarray}
\end{cor}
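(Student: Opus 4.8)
The plan is to deduce Corollary \ref{corsigma} from Theorem \ref{generaldensity2} by specialising the target density $p$ there to a centred Gaussian law, after which the argument is essentially bookkeeping of Stein constants. We may assume $\E(W^2)>0$, for otherwise $W=0$ almost surely, $Z_W$ is degenerate at $0$, and both sides of \eqref{mainbound3} vanish. Set $\tau^2:=\E(W^2)$ and let $p$ be the density of the law $N(0,\tau^2)$, so that $\psi(x)=p'(x)/p(x)=-x/\tau^2$ is linear. First I would check Assumption (D) for this $p$: it is real analytic and strictly positive on $\R$, its derivative $p'=\psi\,p$ changes sign only at $0$ and is continuous there, $\int_\R p(x)\,|\log p(x)|\,dx<\infty$ since $\log p$ is quadratic, and $\psi$ is regular. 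Next, $\E[W\psi(W)]=-\E(W^2)/\tau^2=-1$, and for this value the modified density $p_W$ attached to $p$ in Theorem \ref{generaldensity2} is again the $N(0,\E(W^2))$-density; thus the variable $Z_W$ of that theorem is exactly the one in the statement.

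\textbf{Matching the regression hypothesis.} Because $\psi$ is linear, hypothesis \eqref{3.2} is already of the form \eqref{exchangepsi}: the identity $\E(W'\mid W)=(1-\lambda/\sigma^2)\,W-R(W)=W+\widetilde\lambda\,\psi(W)-R(W)$ forces $\widetilde\lambda:=\lambda\,\E(W^2)/\sigma^2$, with the same remainder $R$. Hence Theorem \ref{generaldensity2} applies with $\widetilde\lambda$ in place of $\lambda$. Exchangeability of $(W,W')$ together with \eqref{3.2} also yields $\E[(W'-W)^2]=2\E(W^2)-2\,\E[W\,\E(W'\mid W)]=2\widetilde\lambda+2\,\E[W R(W)]$; this is the identity that turns the first summand on the right of \eqref{kolall2} into a genuine variance of the conditional second moment $\E[(W'-W)^2\mid W]$.

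\textbf{Inserting the Gaussian Stein constants.} It remains to feed the classical bounds for the solution $f_z$ of the Gaussian Stein equation (the equation of Theorem \ref{generaldensity2} with $\psi(x)=-x/\tau^2$ and indicator data $1_{\{x\le z\}}$) into \eqref{kolall2}. One uses finite constants $d_1=\sup_x|f_z(x)|$, $d_2=\sup_x|f_z'(x)|$ and $d_3=\sup_{x,y}|f_z'(x)-f_z'(y)|$ with their standard dependence on $\tau=\sqrt{\E(W^2)}$, together with a finite $d_4$ for \eqref{addcond}, obtained from $(\psi f_z)'(x)=-\tau^{-2}f_z(x)-\tau^{-2}x\,f_z'(x)$ and the known bound on $|x f_z'(x)|$. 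Since the hypothesis supplies $|W-W'|\le A$, I would choose the free parameter of Theorem \ref{generaldensity2} to be exactly this $A$; then the truncation term $\tfrac{d_3}{2\widetilde\lambda}\,\E\big((W-W')^2\,1_{\{|W-W'|\ge A\}}\big)$ in \eqref{kolall2} vanishes identically and $d_3$ disappears. Substituting $d_1,\dots,d_4$, the value $\widetilde\lambda=\lambda\E(W^2)/\sigma^2$, the identity for $\E[(W'-W)^2]$ from the previous step, and $\E(|\psi(W)|)=\E|W|/\E(W^2)$ into \eqref{kolall2} and collecting terms then reproduces the four summands of \eqref{mainbound3}.

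\textbf{Main obstacle.} The conceptual part is routine; the real work is the last step. One must pin down the explicit values of $d_1,d_2,d_3,d_4$ for the Gaussian Stein solution with a discontinuous (indicator) test function, and in particular check that \eqref{addcond} holds with a finite $d_4$ --- this is exactly where the ``continuity at the sign changes'' hypothesis in Assumption (D) and the regularity in Assumption (B) enter --- and then track how the normalisation $\tau^2=\E(W^2)$ and the relabelling $\lambda\mapsto\widetilde\lambda$ propagate through each constant, so that the sum of the terms in \eqref{kolall2} collapses precisely to the coefficients $\tfrac{\sigma^2}{2\lambda}$, $\sigma^2\big(\tfrac{\sqrt{\E(W^2)}\,(\sqrt{2\pi}+4)}{4}+1.5A\big)\tfrac1\lambda$, $\tfrac{\sigma^2A^3}{\lambda}\big(\tfrac{\sqrt{\E(W^2)}\,\sqrt{2\pi}}{16}+\tfrac{\sqrt{\E(W^2)}}{4}\big)$ and $\sigma^2\cdot1.5A\cdot\sqrt{\E(W^2)}$ displayed in \eqref{mainbound3}. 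Everything else is a direct specialisation of Theorem \ref{generaldensity2}.
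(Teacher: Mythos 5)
First, a point of reference: this paper does not prove Corollary \ref{corsigma} at all --- it is quoted verbatim from Corollary 2.10 of \cite{Eichelsbacher/Loewe:2010}, just as Theorem \ref{generaldensity2} is quoted from Theorem 2.5 there. So your plan of deducing the corollary by specialising Theorem \ref{generaldensity2} to a Gaussian target is indeed the natural route, and it is essentially what the original reference does: with $\psi(x)=-x/\sigma^2$ the regression hypothesis \eqref{3.2} is literally \eqref{exchangepsi}, the modified density $p_W$ becomes the $N(0,\E(W^2))$ density, and with $|W-W'|\leq A$ the truncation term in \eqref{kolall2} is disposed of. Up to this point your sketch is sound (modulo the sign conventions in the quoted theorem, which are already garbled in the paper itself).

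The genuine gap is exactly at the step you defer to the end: the passage from \eqref{kolall2} to the explicit constants of \eqref{mainbound3} is not bookkeeping that "collapses" automatically, and under the normalisation you chose it does not come out as claimed. You take $p=N(0,\E(W^2))$, so $\tilde\lambda=\lambda\,\E(W^2)/\sigma^2$, and the relevant Stein solutions are those of the $N(0,\E(W^2))$ equation, for which the standard bounds (scaled from the unit-variance case) give $d_2\leq 1$ and $d_1\leq \tfrac{\sqrt{2\pi}}{4}\sqrt{\E(W^2)}$. Plugging these into \eqref{kolall2} yields, for instance, a first summand $\tfrac{\sigma^2}{2\lambda\,\E(W^2)}\bigl({\rm Var}\bigl(\E[(W-W')^2|W]\bigr)\bigr)^{1/2}$ and a fourth summand of order $A\sqrt{\E(W^2)}/\E(W^2)$, neither of which is the corresponding term $\tfrac{\sigma^2}{2\lambda}({\rm Var}(\cdot))^{1/2}$ or $\sigma^2\,1.5A\sqrt{\E(W^2)}$ of \eqref{mainbound3}; the powers of $\E(W^2)$ and $\sigma^2$ simply do not line up, and the two bounds are not equivalent (they differ by factors of $\E(W^2)$ and $\sigma^2$ that are not $1$ in the applications of this paper). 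In other words, the corollary's constants encode a specific choice of $\psi$ (namely $-x/\sigma^2$, not $-x/\E(W^2)$) together with a specific normalisation of the Stein equation $\E[W\psi(W)]f'+\psi f=1_{\{\cdot\le z\}}-P(z)$, and recovering \eqref{mainbound3} requires computing $d_1,\dots,d_4$ for exactly that equation (including a finite $d_4$ for \eqref{addcond} with indicator data) and tracking how $\sigma^2$ and $\E(W^2)$ enter each of the four terms. Since this is precisely the quantitative content of the corollary, asserting that "collecting terms then reproduces the four summands" is not a proof; as written, your specialisation would produce a different bound. A smaller remark: choosing the free parameter equal to the almost-sure bound $A$ makes the truncation term vanish only off the event $\{|W-W'|=A\}$, so a one-line argument (e.g.\ taking the parameter slightly larger) should be added there as well.
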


\section{Proofs}\label{Subsec:Proofs}

While developing an exchangeable pair $(W_{\gamma},W_{\gamma}')$ and applying Stein's method for the BEG model we will be confronted with the conditional expectation of a single spin $\omega_i$ and of the product of two single spins $\omega_i \, \omega_j$. Before proving the theorems we will collect some auxiliary results that will be needed in the sequel.  The proofs will be quite elementary. Interesting enough the statements which follow will be the basis of our proofs.

\begin{lemma}\label{BedBEG}
Let $\omega_i\in\{-1,0,1\}$, $S_n$ defined in \eqref{totalspin} and $S_n^i:=S_n-\omega_i$. Then
$$
\E\left[\omega_i| (\omega_k)_{k \neq i} \right]=f_{\beta,K}(S_n^i/n) \bigl( 1 + {\mathcal O}(1/n) \bigr)
$$
with
\begin{align}\label{fBetaBEG}
f_{\beta,K}(x):=
\frac{ 2 e^{-\beta}  \sinh(2 \beta K x)}{1+ 2 e^{-\beta} \cosh(2\beta Kx)}.
\end{align}
\end{lemma}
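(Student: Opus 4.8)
The plan is to write down the conditional law of the single spin $\omega_i$ given the remaining spins directly from the definition \eqref{PBEG}, evaluate the conditional expectation in closed form, and then Taylor-expand one scalar factor.

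First I would fix $(\omega_k)_{k\neq i}$ and set $s:=S_n^i=\sum_{k\neq i}\omega_k$. Since in \eqref{PBEG} the reference measure $P_n=\rho^{\otimes n}$ is the symmetric, uniform product measure on $\{-1,0,1\}^n$ and $-\beta H_{n,K}(\omega)=-\beta\sum_j\omega_j^2+\frac{\beta K}{n}\bigl(\sum_j\omega_j\bigr)^2$, the conditional weight of the value $\omega_i\in\{-1,0,1\}$ is proportional, as a function of $\omega_i$, to $\exp\bigl(-\beta\omega_i^2+\frac{\beta K}{n}(s+\omega_i)^2\bigr)$. Expanding the square and dropping the $\omega_i$-independent factor $\exp(\frac{\beta K}{n}s^2)$, and using that $\omega_i^2\in\{0,1\}$, this reduces to the three unnormalised weights $w_0=1$ for $\omega_i=0$ and $w_\pm=\exp\bigl(-\beta+\frac{\beta K}{n}\pm 2\beta K\,(s/n)\bigr)$ for $\omega_i=\pm 1$. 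Writing $x:=s/n=S_n^i/n$ this yields
\[
\E\bigl[\omega_i\mid(\omega_k)_{k\neq i}\bigr]=\frac{w_+-w_-}{w_0+w_++w_-}
=\frac{2e^{-\beta}e^{\beta K/n}\sinh(2\beta Kx)}{1+2e^{-\beta}e^{\beta K/n}\cosh(2\beta Kx)}.
\]
Next I would isolate the error coming from the factor $e^{\beta K/n}$. Since $\beta,K$ are fixed we have $e^{\beta K/n}=1+\eta_n$ with $0\le\eta_n={\mathcal O}(1/n)$, and since $|x|\le (n-1)/n\le 1$ the quantities $a:=2e^{-\beta}\sinh(2\beta Kx)$ and $b:=2e^{-\beta}\cosh(2\beta Kx)>0$ are bounded uniformly over all admissible conditionings. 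Writing the displayed fraction as $\frac{a}{1+b}\cdot\frac{1+\eta_n}{1+\eta_n b/(1+b)}$ and expanding the second factor to first order,
\[
\frac{1+\eta_n}{1+\eta_n b/(1+b)}=1+\frac{\eta_n}{1+b}+{\mathcal O}(\eta_n^2)=1+{\mathcal O}(1/n),
\]
where the last bound uses $0<1/(1+b)\le 1$. Hence $\E[\omega_i\mid(\omega_k)_{k\neq i}]=\frac{a}{1+b}\bigl(1+{\mathcal O}(1/n)\bigr)=f_{\beta,K}(x)\bigl(1+{\mathcal O}(1/n)\bigr)$ with $f_{\beta,K}$ as in \eqref{fBetaBEG}, which is the claim.

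There is no substantial obstacle here; the computation is elementary. The only points needing a word of care are: (i) the relative error ${\mathcal O}(1/n)$ is uniform over all values of $S_n^i$, because $S_n^i/n$ ranges in $[-1,1]$ and the denominator $1+2e^{-\beta}e^{\beta K/n}\cosh(\cdot)$ is bounded below by $1$, so all implied constants depend only on $(\beta,K)$; and (ii) the identity is to be read as a multiplicative perturbation of $f_{\beta,K}$, hence it holds trivially at $x=0$, where both sides vanish. An entirely equivalent derivation starts from the representation \eqref{PBEG2}, where the extra Gibbs factor conditional on the other spins is $\exp\bigl(\frac{\beta K}{n}(s+\omega_i)^2\bigr)$ against the marginal $\rho_\beta$; it produces the very same closed form.
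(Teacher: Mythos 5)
Your proposal is correct and follows essentially the same route as the paper: compute the conditional single-spin law exactly from the Hamiltonian, obtain the closed form $\frac{2e^{-\beta}e^{\beta K/n}\sinh(2\beta Kx)}{1+2e^{-\beta}e^{\beta K/n}\cosh(2\beta Kx)}$, and absorb the factor $e^{\beta K/n}$ into a uniform multiplicative $1+{\mathcal O}(1/n)$ error. Your explicit expansion of the perturbation is just a slightly more detailed version of the paper's two-sided bound $e^{-2\beta K/n}f_{\beta,K}\leq \E[\omega_i\mid\cdot]\leq e^{2\beta K/n}f_{\beta,K}$, so there is no substantive difference.
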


\begin{proof}
First of all we calculate the conditional probability of a given single spin
\begin{eqnarray} \nonumber
P_{\beta,K,n}(\omega_i=t| (\omega_k)_{k\neq i})&=&\frac{P_{\beta,K,n}(\{\omega_i=t\}\cap\{ (\omega_k)_{k\neq i}\})}{P_{\beta,K,n}((\omega_k)_{k\neq i})}\\\nonumber
&=&\frac{\exp \bigl( -\beta t^2+\frac{\beta K}{n} \bigl( t^2+2t\sum\limits_{k\neq i}\omega_k \bigr) \bigr)}{\sum\limits_{l\in\{-1,0,1\}}\exp\bigl(-\beta l^2+\frac{\beta K}{n}\bigl(l^2+2l\sum\limits_{k\neq i}\omega_k\bigr)\bigr)}\\\label{condsing}
&=&\frac{\exp\bigl(-\beta t^2+\frac{\beta K}{n}\bigl( t^2+2tS_n^i\bigr)\bigr)}{\sum\limits_{l\in\{-1,0,1\}}\exp\bigl(-\beta l^2+\frac{\beta K}{n}\bigl(l^2+2lS_n^i\bigr)\bigr)}.
\end{eqnarray}
Thus we obtain
$$
\E\left[\sigma_i| (\sigma_k)_{k\neq i} \right]=
\frac{ e^{\frac{\beta K}{n}} \, 2 e^{-\beta} \sinh \bigl(2 \beta K \frac{S_n^i}{n} \bigr)}{\sum\limits_{l\in\{-1,0,1\}}\exp\bigl( -\beta l^2+\frac{\beta K}{n}\bigl(l^2+2lS_n^i\bigr) \bigr)}
$$
and  with $|t| \leq 1$ it follows $\E\left[\sigma_i| (\sigma_k)_{k\neq i} \right] \leq e^{2 \beta K /n} f_{\beta,K}(S_n^i/n)$ and $\E\left[\sigma_i| (\sigma_k)_{k\neq i} \right] \geq e^{-2 \beta K /n} f_{\beta,K}(S_n^i/n)$. Hence the result is proved.
\end{proof}

The next lemma will connect the function $f_{\beta,K}$ defined in \eqref{fBetaBEG} and the function $G_{\beta,K}$ taken from \eqref{GBEG}.
\begin{lemma}\label{GzuBedBEG}
With the notions of Lemma \ref{BedBEG} we have
\begin{eqnarray*}
f_{\beta,K}(S_n^i/n)&=& S_n^i/n - \frac{1}{2\beta K}G_{\beta,K}^{(1)}\left(S_n^i/n\right).
\end{eqnarray*}
\end{lemma}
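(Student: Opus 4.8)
The plan is to verify the identity by direct computation, differentiating $G_{\beta,K}$ once and comparing with the explicit form of $f_{\beta,K}$. Recall from \eqref{GBEG} that $G_{\beta,K}(x) = \beta K x^2 - c_\beta(2\beta K x)$, so by the chain rule $G_{\beta,K}^{(1)}(x) = 2\beta K x - 2\beta K \, c_\beta'(2\beta K x)$, hence
\begin{align*}
\frac{1}{2\beta K} G_{\beta,K}^{(1)}(x) = x - c_\beta'(2\beta K x).
\end{align*}
Thus the claimed identity is equivalent to showing $f_{\beta,K}(x) = c_\beta'(2\beta K x)$, i.e. that $f_{\beta,K}(x) = c_\beta'(t)$ with $t = 2\beta K x$.

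The second step is therefore to compute $c_\beta'$ from its closed form \eqref{CBEG}. We have $c_\beta(t) = \log\bigl(1 + e^{-\beta}(e^t + e^{-t})\bigr) - \log(1 + 2e^{-\beta})$, so differentiating,
\begin{align*}
c_\beta'(t) = \frac{e^{-\beta}(e^t - e^{-t})}{1 + e^{-\beta}(e^t + e^{-t})} = \frac{2 e^{-\beta}\sinh(t)}{1 + 2e^{-\beta}\cosh(t)}.
\end{align*}
Substituting $t = 2\beta K x$ gives exactly the expression \eqref{fBetaBEG} defining $f_{\beta,K}(x)$. Applying this with $x = S_n^i/n$ yields $f_{\beta,K}(S_n^i/n) = c_\beta'(2\beta K S_n^i/n) = S_n^i/n - \frac{1}{2\beta K} G_{\beta,K}^{(1)}(S_n^i/n)$, which is the assertion.

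This lemma has essentially no obstacle: it is a one-line chain-rule computation together with recognizing $c_\beta'$ as a hyperbolic-tangent-type ratio. The only thing to be careful about is bookkeeping the factor $2\beta K$ coming from the inner derivative of $c_\beta(2\beta K x)$, and matching $\frac{d}{dt}\log(1 + 2e^{-\beta}\cosh t)$ correctly against the numerator $2e^{-\beta}\sinh t$. One may alternatively phrase the whole argument as: $c_\beta'$ is the mean of $\rho_\beta$-tilted by $t$, and a direct evaluation on the support $\{-1,0,1\}$ reproduces \eqref{fBetaBEG}; either route is immediate.
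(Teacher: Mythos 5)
Your proof is correct and follows essentially the same route as the paper: a direct chain-rule differentiation of $G_{\beta,K}(x)=\beta Kx^2-c_\beta(2\beta Kx)$, identifying $c_\beta'(t)=\frac{2e^{-\beta}\sinh t}{1+2e^{-\beta}\cosh t}$ and substituting $t=2\beta Kx$ to recover $f_{\beta,K}$. No gaps; the bookkeeping of the factor $2\beta K$ is handled correctly.
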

\begin{proof}
A direct calculation and \eqref{CBEG} yields
\begin{eqnarray}\label{Gstrich}\nonumber
G_{\beta,K}^{(1)}(x)&=&\frac{\partial}{\partial x}\bigl( \beta Kx^2-c_{\beta}(2\beta Kx)\bigr)\\\nonumber
&=&2\beta K\biggl(x-\frac{1+2e^{-\beta}}{1+e^{-\beta}\bigl(e^{2\beta Kx}+e^{-2\beta Kx}\bigr)}\cdot \frac{e^{-\beta}\bigl(e^{2\beta Kx}-e^{-2\beta Kx}\bigr)}{1+2e^{-\beta}}\biggr)\\
&=&2\beta K\biggl( x-\frac{ 2 e^{-\beta}\sinh(2\beta Kx)}{1+2 e^{-\beta} \cosh(2\beta Kx)}\biggr).
\end{eqnarray}
which yields the result.
\end{proof}

In order to get a bound on some variances we investigate the covariances for $i\neq j$.

\begin{lemma}\label{doppelspin}
For $i\neq j$, $i,j\in\{1,\ldots,n\}$, we have
\begin{align*}
\E[\omega_i^2\omega_j^2|(\omega_l)_{l\notin\{i,j\}}]&=f_1(S_n^{i,j}/n) \bigl( 1 + {\mathcal O} (1/n) \bigr)
\end{align*}
with
$$
f_1(x):=f_{1, \beta, K}(x) := \frac{2e^{-2\beta} (1 + \cosh(4\beta K x))}{1+4e^{-\beta}\cosh(2\beta K x)+2e^{-2\beta} (1+\cosh(4\beta K x))}
$$
and $S_n^{i,j}:=\sum\limits_{\stackrel{t=1}{t\notin \{i,j\}}}^n\omega_t$.
Moreover we obtain
\begin{align*}
\E[\omega_i^2|(\omega_l)_{l\neq i}]&=f_2(S_n^{i}/n) \bigl( 1 + {\mathcal O} (1/n) \bigr)
\end{align*}
with
$$
f_2(x):=f_{2, \beta, K}(x) := \frac{2e^{-\beta}\cosh(2\beta K x)}{1+2e^{-\beta}\cosh(2\beta K x)}
$$
and $S_n^i= S_n - \omega_i$. Also we have that  $0\leq f_i(x)\leq 1$, $i\in\{1,2\}$, for all $x\in\R$.
\end{lemma}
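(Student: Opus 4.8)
The plan is to repeat the elementary computation behind Lemma~\ref{BedBEG}, now conditioning on all but two (respectively all but one) of the spins. First I would read off, from the Gibbs formula~\eqref{PBEG} and the Hamiltonian~\eqref{HBEG}, the conditional joint law of $(\omega_i,\omega_j)$ given $(\omega_l)_{l\notin\{i,j\}}$: writing $\sum_k\omega_k=\omega_i+\omega_j+S_n^{i,j}$ and expanding $\sum_k\omega_k^2$ and $\bigl(\sum_k\omega_k\bigr)^2$, every factor not depending on $(\omega_i,\omega_j)$ -- in particular the $\rho$-weights, since $\rho$ is uniform on $\{-1,0,1\}$ -- cancels between numerator and denominator, so that for $t,s\in\{-1,0,1\}$
\begin{equation*}
P_{\beta,K,n}\bigl(\omega_i=t,\,\omega_j=s\mid(\omega_l)_{l\notin\{i,j\}}\bigr)\;\propto\;\exp\Bigl(-\beta(t^2+s^2)+\tfrac{\beta K}{n}\bigl(t^2+s^2+2ts+2(t+s)S_n^{i,j}\bigr)\Bigr),
\end{equation*}
the proportionality being in $(t,s)$ and the normalisation being the sum of the right-hand side over $t,s\in\{-1,0,1\}$; this is just the two-coordinate version of the computation yielding \eqref{condsing}.

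To obtain $\E[\omega_i^2\omega_j^2\mid(\omega_l)_{l\notin\{i,j\}}]$ I would multiply this weight by $t^2s^2$ and sum over $t,s$. Since $t^2s^2$ vanishes unless $t,s\in\{-1,1\}$, only terms with $t^2=s^2=1$ survive, and the identity
$$
\sum_{t,s\in\{-1,1\}}\exp\Bigl(\tfrac{2\beta K}{n}\bigl(ts+(t+s)S_n^{i,j}\bigr)\Bigr)=2e^{2\beta K/n}\cosh\!\bigl(4\beta K S_n^{i,j}/n\bigr)+2e^{-2\beta K/n}
$$
turns the numerator into $2e^{-2\beta}e^{2\beta K/n}\bigl(e^{2\beta K/n}\cosh(4\beta K S_n^{i,j}/n)+e^{-2\beta K/n}\bigr)$. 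The normalising sum splits into the term $t=s=0$ (equal to $1$), the four terms with exactly one of $t,s$ equal to $0$ (summing to $4e^{-\beta}e^{\beta K/n}\cosh(2\beta K S_n^{i,j}/n)$), and the four terms with $t,s\in\{-1,1\}$ just computed. Writing $x=S_n^{i,j}/n$, the numerator therefore equals $2e^{-2\beta}(1+\cosh(4\beta Kx))$ and the normalisation equals $1+4e^{-\beta}\cosh(2\beta Kx)+2e^{-2\beta}(1+\cosh(4\beta Kx))$, each up to a factor $1+\mathcal{O}(1/n)$; dividing yields the first claim. The second claim is the same computation with a single conditioned coordinate: the weight of $\omega_i=t$ given $(\omega_l)_{l\neq i}$ is proportional to $\exp(-\beta t^2+\tfrac{\beta K}{n}(t^2+2tS_n^i))$, multiplying by $t^2$ restricts to $t\in\{-1,1\}$, the numerator becomes $2e^{-\beta}e^{\beta K/n}\cosh(2\beta K S_n^i/n)$ and the normalisation $1+2e^{-\beta}e^{\beta K/n}\cosh(2\beta K S_n^i/n)$, so the ratio is $f_2(S_n^i/n)\bigl(1+\mathcal{O}(1/n)\bigr)$. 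Finally $0\le f_i\le1$ is clear, since each $f_i$ has the form $A/(A+B)$ with $A\ge0$ and $B>0$: for $f_1$ take $A=2e^{-2\beta}(1+\cosh(4\beta Kx))$ and $B=1+4e^{-\beta}\cosh(2\beta Kx)$, for $f_2$ take $A=2e^{-\beta}\cosh(2\beta Kx)$ and $B=1$.

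The one point requiring some care, and the place I expect the ``obstacle'' to be, is the uniformity of the $\mathcal{O}(1/n)$ error over all conditioning configurations; I would handle it exactly as at the end of the proof of Lemma~\ref{BedBEG}. Since $|S_n^{i,j}|\le n-2$ and $|S_n^i|\le n-1$, the arguments of all hyperbolic functions above stay in a fixed compact set, and, as a function of the value $x=S_n^{i,j}/n$ (resp.\ $x=S_n^i/n$), each summand of the numerator and of the normalisation has the form $g(x)\,e^{a/n}$ with $g\ge0$ and $|a|\le 4\beta K$. Summing, and then dividing by a normalisation bounded below by its $t=s=0$ (respectively $t=0$) summand, sandwiches the conditional expectation between $f_i(x)e^{-c/n}$ and $f_i(x)e^{c/n}$ for a constant $c=c(\beta,K)$, which is $f_i(x)\bigl(1+\mathcal{O}(1/n)\bigr)$. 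No further difficulty is anticipated; the whole argument rests on the exact conditional law displayed above.
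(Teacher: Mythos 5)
Your proposal is correct and follows essentially the same route as the paper: it computes the exact two-spin (resp.\ one-spin) conditional Gibbs weights, sums them against $t^2s^2$ (resp.\ $t^2$) so that only $t,s\in\{-1,1\}$ survive in the numerator, identifies the resulting hyperbolic-cosine expressions with $f_1$ and $f_2$, and absorbs the $e^{\pm c\beta K/n}$ factors into the multiplicative $1+\mathcal{O}(1/n)$ exactly as the paper does (cf.\ the sandwich at the end of the proof of Lemma \ref{BedBEG}). The only cosmetic difference is that you group the nine $(t,s)$ cases by how many coordinates vanish instead of listing them one by one, and you spell out the $0\le f_i\le 1$ bound that the paper merely asserts.
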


\begin{proof}
First of all we take a look at the conditional probability of $\omega_i$ and $\omega_j$ given all the other spins.
$P_{\beta,K,n}\bigl(\omega_i=t,\omega_j=s|(\omega_l)_{l\neq\{i,j\}}\bigr)$
\begin{eqnarray*}
~&=&\frac{P_{\beta,K,n}\left(\{\omega_i=t,\omega_j=s\}\cap\{(\omega_l)_{l\neq\{i,j\}}\}\right)}{P_{\beta,K,n}\left((\omega_l)_{l\neq\{i,j\}}\right)}\\
&=&\frac{\exp\bigl( -\beta (t^2+s^2)+\frac{\beta K}{n}\bigl( (t+s)^2+2(t+s)S_n^{i,j}\bigr)\bigr)}{\sum\limits_{m,r\in\{-1,0,1\}}\exp\bigl(-\beta (m^2+r^2)+
\frac{\beta K}{n}\bigl((m+r)^2+2(m+r)S_n^{i,j}\bigr)\bigr)}.
\end{eqnarray*}
Let
\begin{align}\label{Dij}
D_{t,s}(x)&:=\exp\bigl(-\beta (t^2+s^2)+\frac{\beta K}{n}\bigl((t+s)^2+2(t+s)x\bigr)\bigr),\\
N_{t,s}(x)&:=t^2s^2D_{t,s}(x).\label{Nij}
\end{align}
Then we have that for the different values of $\omega_i=t$ and $\omega_j=s$
\begin{align*}
\E[\omega_i^2\omega_j^2|(\omega_l)_{l\notin\{i,j\}}]&=\frac{\sum\limits_{t,s\in\{-1,0,1\}}N_{t,s}(S_n^{i,j})}{\sum\limits_{t,s\in\{-1,0,1\}}D_{t,s}(S_n^{i,j})}.
\end{align*} 
We have 9 cases:
For $(\omega_i, \omega_j)=(0,0)$ it is $N_{0,0}(S_n^{i,j})=0$ and $D_{0,0}(S_n^{i,j})=1$. For
$(\omega_i, \omega_j) \in \{ (0,1), (1,0) \}$ we obtain  $N_{0,1}(S_n^{i,j})=0$ and $D_{0,1}(S_n^{i,j})=e^{-\beta+Kn^{-1}}e^{2\beta Kn^{-1}S_n^{i,j}}$.
If $(\omega_i, \omega_j) \in \{ (0,-1), (-1,0) \}$ we obtain $N_{0,-1}(S_n^{i,j})=0$ and $D_{0,-1}(S_n^{i,j})=e^{-\beta+Kn^{-1}}e^{-2\beta Kn^{-1}S_n^{i,j}}$.
Moreover for $(\omega_i, \omega_j)=(1,1)$ it is $ N_{1,1}(S_n^{i,j})=e^{-2\beta+4Kn^{-1}}e^{4\beta Kn^{-1}S_n^{i,j}}$, $D_{1,1}(S_n^{i,j})=N_{1,1}(S_n^{i,j})$, 
for $(\omega_i, \omega_j) \in \{ (1,-1), (-1,1) \}$ we have $N_{1,-1}(S_n^{i,j})=e^{-2\beta}=D_{1,-1}(S_n^{i,j})$. 
Finally for $(\omega_i, \omega_j)=(-1,-1)$ it holds $ N_{-1,-1}(S_n^{i,j})=e^{-2\beta+4Kn^{-1}}e^{-4\beta Kn^{-1}S_n^{i,j}}$ and $D_{-1,-1}(S_n^{i,j})=N_{-1,-1}(S_n^{i,j})$.

Using
$e^{4\beta Kn^{-1} S_n^{i,j}}+e^{-4\beta Kn^{-1} S_n^{i,j}}=2\cosh(4\beta Kn^{-1} S_n^{i,j})$
we have
$\E[\omega_i^2\omega_j^2|(\omega_l)_{l\neq\{i,j\}}]$
\begin{align*}
~&=\frac{e^{-2\beta}+e^{-2\beta}+e^{-2\beta+4\beta Kn^{-1}}\left[e^{4\beta Kn^{-1} S_n^{i,j}}+e^{-4\beta Kn^{-1} S_n^{i,j}}\right]}{1+4e^{-\beta+\beta Kn^{-1}}\cosh(2\beta Kn^{-1} S_n^{i,j})+2e^{-2\beta}+2e^{-2\beta+4\beta Kn^{-1}}\cosh(4\beta Kn^{-1} S_n^{i,j})}\\
&=\frac{2e^{-2\beta}+2e^{-2\beta+4\beta Kn^{-1}}\cosh(4\beta Kn^{-1} S_n^{i,j})}{1+4e^{-\beta+\beta Kn^{-1}}\cosh(2\beta Kn^{-1} S_n^{i,j})+2e^{-2\beta}+2e^{-2\beta+4\beta Kn^{-1}}\cosh(4\beta Kn^{-1} S_n^{i,j})}\\
&=f_1(S_n^{i,j}/n) \bigl( 1 + {\mathcal O} (1/n) \bigr).
\end{align*}
Using the conditional probability of a single spin given all the other spins given in \eqref{condsing} we have
$$
\E\left[\omega_i^2 | (\omega_k)_{k\neq i} \right]=\frac{\exp\bigl(-\beta+\frac{\beta K}{n}\bigl(1-2S_n^i\bigr)\bigr) +\exp\bigl(-\beta +\frac{\beta K}{n}\bigl(1+2S_n^i\bigr)\bigr)}{\sum\limits_{l\in\{-1,0,1\}}\exp\bigl(-\beta l^2+\frac{\beta K}{n}\bigl(l^2+2lS_n^i\bigr)\bigr)}
=f_2(S_n^i/n) \bigl( 1 + {\mathcal O} (1/n) \bigr).
$$
\end{proof}

\begin{lemma}\label{covest}
Let $\gamma=1/2$ if $(\beta,K)\in A$, $\gamma=1/4$ if $(\beta,K)\in B$ and $\gamma=1/6$ if $(\beta,K)=C$. Then we have for $i,j\in\{1,\ldots,n\}$, $i\neq j$,
\begin{align*}
{\rm Cov} (\omega_i^2,\omega_j^2)  = \Oo(1 / n^{\min(4\gamma,1)}).
\end{align*}
where $C$ denotes a constant.
\end{lemma}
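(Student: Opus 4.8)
The starting point would be the elementary decomposition
\[
{\rm Cov}(\omega_i^2,\omega_j^2) = \E\bigl[\omega_i^2\omega_j^2\bigr] - \E\bigl[\omega_i^2\bigr]\,\E\bigl[\omega_j^2\bigr],
\]
into which I would plug the conditional expectations of Lemma \ref{doppelspin} via the tower property. The key algebraic observation is that $f_1\equiv f_2^{2}$ on all of $\R$: applying $1+\cosh(4\beta Kx)=2\cosh^2(2\beta Kx)$ to the numerator of $f_1$ and $1+4e^{-\beta}\cosh(2\beta Kx)+4e^{-2\beta}\cosh^2(2\beta Kx)=(1+2e^{-\beta}\cosh(2\beta Kx))^2$ to its denominator turns $f_1(x)$ into exactly $f_2(x)^2$. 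Since $f_1,f_2$ are bounded, $C^\infty$, and have bounded first derivatives on $\R$, and since $|S_n^{i,j}/n-S_n/n|\le 2/n$ while $|S_n^{i}/n-S_n/n|\le 1/n$ (and likewise for $j$), I may replace every occurrence of $S_n^{i,j}/n$, $S_n^i/n$, $S_n^j/n$ by $S_n/n$ at total cost $\Oo(1/n)$; together with the $\Oo(1/n)$ factors already present in Lemma \ref{doppelspin} this yields
\[
{\rm Cov}(\omega_i^2,\omega_j^2) = \E\bigl[f_2(S_n/n)^2\bigr] - \bigl(\E[f_2(S_n/n)]\bigr)^2 + \Oo(1/n) = {\rm Var}\bigl(f_2(S_n/n)\bigr) + \Oo(1/n),
\]
with an $\Oo(1/n)$ depending only on $(\beta,K)$, since all the bounds used (the error terms in Lemma \ref{doppelspin}, the Lipschitz constants of $f_1,f_2$) are uniform in $i,j,n$.

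It then remains to show ${\rm Var}(f_2(S_n/n))=\Oo(n^{-4\gamma})$. Because $f_2$ is even and real analytic with $f_2^{(1)}(0)=0$, and because $|S_n/n|\le 1$ always, a second-order Taylor estimate on $[-1,1]$ gives a constant $c=c(\beta,K)$ with $|f_2(S_n/n)-f_2(0)|\le c\,(S_n/n)^2$; hence
\[
{\rm Var}\bigl(f_2(S_n/n)\bigr)\ \le\ \E\bigl[(f_2(S_n/n)-f_2(0))^2\bigr]\ \le\ c^2\,\E\bigl[(S_n/n)^4\bigr]\ =\ c^2\,n^{-4\gamma}\,\E\bigl[W_\gamma^4\bigr],
\]
using $S_n=n^{1-\gamma}W_\gamma$ in the last equality.

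The only input requiring real care is that $\E[W_\gamma^4]$ is bounded uniformly in $n$, for $\gamma=1/2,1/4,1/6$ according to whether $(\beta,K)$ lies in $A$, $B$ or $C$. This is precisely the statement that under $P_{\beta,K,n}$ the spin per site $S_n/n$ concentrates near the minimizer $0$ of $G_{\beta,K}$ at the scale $n^{-\gamma}$ dictated by its type (Theorem \ref{type}); it follows from the Hubbard–Stratonovich representation \eqref{PBEG2} of $P_{\beta,K,n}$ together with the polynomial lower bounds on $G_{\beta,K}$ near $0$, exactly as in the scaling-limit analysis of \cite{CosteniucEllis:2007}, and I regard supplying (or precisely citing) this uniform moment bound as the main — essentially the only non-routine — step of the argument. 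Granting it, ${\rm Var}(f_2(S_n/n))=\Oo(n^{-4\gamma})$, and combining with the display above,
\[
{\rm Cov}(\omega_i^2,\omega_j^2) = \Oo(n^{-4\gamma}) + \Oo(n^{-1}) = \Oo\bigl(n^{-\min(4\gamma,1)}\bigr),
\]
which is the claim. The reasoning is identical in the three cases; only the numerical value of $\gamma$ (hence of $4\gamma$) changes.
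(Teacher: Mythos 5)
Your proposal is correct and follows essentially the same route as the paper's proof: the tower property with Lemma \ref{doppelspin}, the identity $f_1=f_2^2$ via $2\cosh^2(x)=\cosh(2x)+1$, Lipschitz replacement of the leave-one/two-out sums at cost $\Oo(1/n)$, a second-order Taylor bound at $0$ using $f_2^{(1)}(0)=0$, and the uniform moment bound on $W_\gamma$ (the paper's Lemma \ref{MomBEG}, proved exactly by the Hubbard--Stratonovich argument you indicate). The only differences are cosmetic: you center at $S_n/n$ rather than $S_n^{i,j}/n$ and bound the variance by $\E\bigl[(f_2-f_2(0))^2\bigr]$ instead of expanding the Taylor remainder term by term.
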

\begin{proof}
We have that
\begin{eqnarray*}
\text{Cov}\left(\omega_i^2,\omega_j^2\right)
&=&\E\left[ \E[  \omega_i^2\omega_j^2 |(\omega_l)_{l\neq\{i,j\}}] \right] -\E\left[\E [ \omega_i^2 |(\omega_l)_{l\neq\{i\}} ] \right] \, \E\left[\E [ \omega_j^2 |(\omega_l)_{l\neq\{j\}} ] \right].
\end{eqnarray*}
Using Lemma \ref{doppelspin} we obtain
\begin{eqnarray*}
\text{Cov}\left(\omega_i^2,\omega_j^2\right)
&=& \biggl( \E[f_1(S_n^{i,j}/n)]- \bigl( \E [f_2(S_n^{i}/n)] \bigr)^2 \biggr) \bigl( 1 + {\mathcal O} (1/n) \bigr) 
\end{eqnarray*}
We observe that, for all $i,j\in\{1,\ldots,n\}$, we have
\begin{align*}
f_2(S_n^{i}/n)=f_2(S_n^{i,j}/n)+(f_2(S_n^{i}/n)-f_2(S_n^{i,j}/n))=f_2(S_n^{i,j}/n)+\Oo(n^{-1}).
\end{align*}
This follows since $|f_2(x) - f_2(y)| \leq 2 e^{-\beta} | \cosh( 2 \beta K x) - \cosh (2 \beta K y) | \leq c(\beta,K)|x-y|$, using Lipschitz-continuity of $\cosh(\cdot)$
on a compact interval, where $c(\beta,K)$ is a constant. Hence we obtain
$$
\E[f_1(S_n^{i,j}/n)]- \bigl( \E [f_2(S_n^{i}/n)] \bigr)^2 = \E[f_1(S_n^{i,j}/n)]- \bigl( \E [f_2(S_n^{i,j}/n)] \bigr)^2 + \E [f_2(S_n^{i,j}/n)] {\mathcal O}(1/n) + {\mathcal O}(1/n^2).
$$
We can see that
$$
(f_2(x))^2 =\frac{4e^{-2\beta}\cosh^2(2\beta K x)}{1+4e^{-\beta}\cosh(2\beta K x)+4e^{-2\beta}\cosh^2(2\beta K x)}.
$$
By applying the identity $2\cosh^2(x)=\cosh(2x)+1$ we obtain
$$
(f_2(x))^2 =\frac{2e^{-2\beta}(1+\cosh(4\beta K x))}{1+4e^{-\beta}\cosh(2\beta K x)+2e^{-2\beta}( 1+ \cosh(4\beta K x))} =f_1(x).
$$
Thus
$$
\E[f_1(S_n^{i,j}/n)]- \bigl( \E [f_2(S_n^{i,j}/n)] \bigr)^2 = \E[f_2^2(S_n^{i,j}/n)]- \bigl( \E [f_2(S_n^{i,j}/n)] \bigr)^2 = \V  [f_2(S_n^{i,j}/n)].
$$
Summarising we have
$$
\text{Cov}(\omega_i^2,\omega_j^2) = \biggl( \V  [f_2(S_n^{i,j}/n)] + \E [f_2(S_n^{i,j}/n)] {\mathcal O}(1/n) + {\mathcal O}(1/n^2) \biggr) \bigl( 1 + {\mathcal O} (1/n) \bigr).
$$
Since $f_2'(0)=0$, Taylor expansion of $f_2$ at $0$ leads to
\begin{align*}
f_2(S_n^{i,j}/n) = f_2(0)+\Oo \bigl( (S_n^{i,j}/n)^2 \bigr) = f_2(0)+ \Oo \bigl( W_{\gamma}^2/n^{2\gamma} \bigr) +  \Oo \bigl( W_{\gamma}/n^{\gamma+1} \bigr) + \Oo (1/n^2).
\end{align*}
We note that $\gamma$ depends on the region of $(\beta,K)$: $\gamma=1/2$ if $(\beta,K)\in A$, $\gamma=1/4$ if $(\beta,K)\in B$ and $\gamma=1/6$ if $(\beta,K)=C$.
Finally we obtain, by applying Lemma \ref{MomBEG}, that $\V  [f_2(S_n^{i,j}/n)] = \Oo (n^{-4\gamma})$ and  $\E [f_2(S_n^{i,j}/n)] = \Oo(1)$ and
therefore
\begin{eqnarray*}
\text{Cov}(\omega_i^2,\omega_j^2) = \Oo(1 / n^{\min(4\gamma,1)}).
\end{eqnarray*}
\end{proof}

\begin{remark}\label{nCov}
Note that a proof of Lemma \ref{covest} for parameters $(\beta_n,K_n)$ converging to $(\beta,K)$ taken from one of the regions $A$, $B$ and $C$ follows exactly the lines of the proof of Lemma \ref{covest} with $\beta$ replaced by $\beta_n$ and $K$ replaced by $K_n$. For $(\beta_n,K_n)$ the value of $\gamma$ depends on the region that the sequence is converging from.
\end{remark}

We can bound higher order moments as follows:

\begin{lemma}\label{MomBEG} Let $W_{\gamma}$ be defined in \eqref{Wg}. Then, for all positive bounded sequences $(\beta_n,K_n)$, $\gamma\in(0,1/2]$ and all $l\in\N$ we obtain
\begin{align*}
\E[W_{\gamma}^l]\leq const.(l).
\end{align*}
\end{lemma}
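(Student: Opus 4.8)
The plan is to pass from $S_n/n$ to a ``soft spin'' carrying an explicit Lebesgue density and then run a Laplace-type estimate governed by the Taylor expansions \eqref{taylorGBEGA}--\eqref{4ABlGBEG} of $G_{\beta_n,K_n}$. Applying the Hubbard--Stratonovich identity $e^{a^2/2}=(2\pi)^{-1/2}\int_\R e^{-x^2/2+ax}\,dx$ to the factor $\exp[n\beta_nK_n(S_n/n)^2]$ in \eqref{PBEG2}, integrating against the product measure $P_{n,\beta_n}$ (whose one-dimensional cumulant generating function is $c_{\beta_n}$ from \eqref{CBEG}) and completing the square, one obtains the exact representation
\begin{align*}
\frac{S_n}{n}+\zeta_n\ \overset{d}{=}\ U_n ,\qquad \zeta_n\sim N\!\left(0,(2n\beta_nK_n)^{-1}\right)\ \text{independent of}\ U_n ,
\end{align*}
where $U_n$ has Lebesgue density proportional to $\exp\bigl(-nG_{\beta_n,K_n}(\cdot)\bigr)$ on $\R$, a legitimate probability density since $G_{\beta_n,K_n}(w)\sim\beta_nK_nw^2$ as $|w|\to\infty$. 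As $P_{\beta_n,K_n,n}$ is invariant under $\omega\mapsto-\omega$, the odd moments of $S_n$, hence of $W_\gamma$, vanish, so it suffices to bound $\E[W_\gamma^{2m}]$. Writing $W_\gamma=n^\gamma(S_n/n)=n^\gamma U_n-n^\gamma\zeta_n$ and expanding binomially, the $\zeta_n$-terms are harmless because $\E[(n^\gamma\zeta_n)^{2j}]=(2j-1)!!\,(2\beta_nK_n)^{-j}\,n^{j(2\gamma-1)}\le\mathrm{const.}(j)$ for $\gamma\le 1/2$ (with $\beta_nK_n$ bounded away from $0$); thus everything reduces to a uniform bound $\E[(n^\gamma U_n)^{2m}]\le\mathrm{const.}(m)$.

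Changing variables $x=n^\gamma w$ turns the target into the $2m$-th moment of the probability measure with density proportional to $e^{-V_n(x)}$, where $V_n(x):=nG_{\beta_n,K_n}(xn^{-\gamma})$ (the Jacobians cancel). It therefore suffices to establish a uniform coercivity bound $V_n(x)\ge a\,x^{2s}-a'$ for fixed $a>0$, $a'\ge0$ and some integer $s\ge1$, together with a uniform lower bound $\int_\R e^{-V_n}\,dx\ge\mathrm{const.}>0$. The latter follows by restricting to $\{|x|\le\delta\}$ for a fixed small $\delta$: there $|w|=|x|n^{-\gamma}$ is tiny, the Taylor expansion applies, and
\begin{align*}
V_n(x)\le\tfrac12\bigl|G_{\beta_n,K_n}^{(2)}(0)\bigr|n^{1-2\gamma}\delta^2+\tfrac1{24}\bigl|G_{\beta_n,K_n}^{(4)}(0)\bigr|n^{1-4\gamma}\delta^4+\tfrac1{6!}G_{\beta_n,K_n}^{(6)}(0)\,n^{1-6\gamma}\delta^6+\cdots ,
\end{align*}
which stays bounded precisely because the conditions in force --- $\gamma=1/2$ in region $A$, $v=\min(2\gamma+\Delta_2-1,4\gamma-1)=0$ in region $B$, $w=\min(2\gamma+\Delta_2-1,4\gamma+\Delta_1-1,6\gamma-1)=0$ in region $C$ --- force each of the exponents $1-\Delta_2-2\gamma$, $1-\Delta_1-4\gamma$, $1-6\gamma$ to be $\le0$ (here one uses \eqref{2ABlGBEG}, \eqref{4ABlGBEG}, that $G_{\beta_c,K_c(\beta_c)}^{(6)}(0)>0$, and boundedness of the Taylor remainders). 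Hence $\int e^{-V_n}\ge2\delta\,e^{-\mathrm{const.}}>0$.

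For the coercivity bound I would split $\R$ at a fixed small $r_0>0$. On $\{|w|\ge r_0\}$, uniqueness of the global minimum $0$ of the limiting $G_{\beta,K}$ (Lemma \ref{aquMG}, Theorem \ref{type}) together with local uniform convergence $G_{\beta_n,K_n}\to G_{\beta,K}$ gives $G_{\beta_n,K_n}(w)\ge\varepsilon_0>0$ there for all large $n$, so this portion of each moment integral is exponentially small relative to $\int e^{-V_n}$ and is discarded. On $\{|w|\le r_0\}$ I insert the Taylor expansion with its uniformly bounded remainder, substitute $x=n^\gamma w$, and use the same coefficient asymptotics: the exponents being $\le0$ keeps all coefficients of $V_n$ bounded, while the coefficient of the top monomial not tending to $0$ --- of some degree $2s$ --- stays bounded below by a strictly positive constant once $r_0$ is small enough that the remainders cannot spoil it. A negative subleading coefficient (which occurs for $k<0$ or $b<0$) is absorbed into that positive top monomial by a Young-type inequality, and into the $x^2$-term in the non-classical scalings where the top coefficient itself tends to $0$; both absorptions are valid on the window $|x|\le r_0n^\gamma$ precisely because the exponents line up. This yields $V_n(x)\ge a\,x^{2s}-a'$ there, and combining with the previous paragraph gives $\E[(n^\gamma U_n)^{2m}]\le e^{a'}\int_\R x^{2m}e^{-ax^{2s}}\,dx\big/\bigl(2\delta\,e^{-\mathrm{const.}}\bigr)=\mathrm{const.}(m)$ uniformly in $n$, which proves the lemma.

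The step I expect to be the real obstacle is the uniform-in-$n$ coercivity on the window $|x|\le r_0n^\gamma$: one must verify that, after the rescaling $w=xn^{-\gamma}$, the bounded Taylor remainders and the subleading monomials are genuinely dominated by the surviving higher-degree monomial --- each extra power of $w$ costs a factor $n^{-\gamma}$, so this is a comparison of the form $\epsilon_n|x|^{j+1}\le\mathrm{const.}\,x^{j+2}$ on $|x|\lesssim n^\gamma$, which is exactly where the smallness of $r_0$ is needed --- and that a negative quadratic or quartic coefficient (the cases $k<0$, $b<0$) is neutralised by the strictly positive sixth derivative at $C$ with $6\gamma-1\le0$, or by the positive quartic coefficient at a $B$-point. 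This is precisely where the hypotheses $v=0$ and $w=0$ of Theorems \ref{nB} and \ref{nC} become indispensable; the remaining steps (the binomial expansion in $\zeta_n$, the normaliser lower bound, discarding the far region) are routine.
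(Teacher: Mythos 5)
Your plan is essentially the paper's own proof: the Hubbard--Stratonovich transformation reduces the claim to moment bounds for the law with density proportional to $\exp\bigl(-nG_{\beta_n,K_n}(y/n^{\gamma})\bigr)$, and the Taylor expansions \eqref{taylorGBEGA}--\eqref{taylorGBEGC} together with \eqref{2ABlGBEG}, \eqref{4ABlGBEG} and the hypotheses $v=0$ (region $B$) and $w=0$ (region $C$) keep the rescaled coefficients bounded, so the limiting densities $\exp(-\text{const.}\,y^{2s})$ have all moments; your coercivity and normalisation estimates simply flesh out what the paper states tersely. One small slip to repair: $\zeta_n$ is independent of $S_n/n$, not of $U_n$, so you may not expand $\E[(n^{\gamma}U_n-n^{\gamma}\zeta_n)^{2m}]$ as if the two factors were independent --- instead either use Minkowski's inequality, $\|W_{\gamma}\|_{2m}\leq\|n^{\gamma}U_n\|_{2m}+n^{\gamma}\|\zeta_n\|_{2m}$, or expand $\E[(W_{\gamma}+n^{\gamma}\zeta_n)^{2m}]$ and drop the nonnegative cross terms (both variables are symmetric and independent), which yields $\E[W_{\gamma}^{2m}]\leq\E[(n^{\gamma}U_n)^{2m}]$ directly.
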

\begin{proof}
The proof is based on the Hubbard-Stratonovich transformation that is used for example in \cite[Lemma 4.1]{CosteniucEllis:2007} to derive the central limit theorem for the total spin per site. The situation of fixed $(\beta,K)$ is included in the study of sequences $(\beta_n, K_n)$ converging to $(\beta,K)$. Let $Y_n$ be a sequence of $N\left(0,(2\beta_nK_n)^{-1}\right)$ random variables independent of all other random variables involved. According to \cite[Lemma 4.1]{CosteniucEllis:2007} we have
\begin{eqnarray*}
\Ll\left(W_{\gamma}+\frac{Y_n}{n^{1/2-\gamma}}\right)=\frac{1}{Z}\exp\left[-nG_{\beta_n,K_n}(y/n^{\gamma})\right],
\end{eqnarray*}
where $Z$ denotes the normalisation. Obviously this transformation does not change the finiteness of any of the moments of $W_{\gamma}$. In order to use a Taylor expansion we have to differentiate between the regions $A$, $B$ and $C$. If the sequence converges to a point in $A$ we have $\gamma=1/2$ and by
 \eqref{taylorGBEGA} the density with respect to the Lebesgue measure is given by const.$\exp(-y^2)$ (up to negligible terms).
 Next we turn to the set $B$. We consider an arbitrary positive, bounded sequence converging to $\beta$ and $K_n$ given by \eqref{KnBEG}.
 With \eqref{taylorGBEGB} we obtain
$$
nG_{\beta_n,K_n}(y/n^{\gamma})=\frac{1}{n^{2\gamma+\Delta_2-1}}\frac{kC_n^{(2)}}{2}y^2+\frac{1}{n^{4\gamma-1}}\frac{G_{\beta_n,K_n}^{(4)}(0)}{24}y^4
+\frac{1}{n^{5\gamma-1}}B_{\beta_n,K_n}(\xi)y^5
$$
where $C_n^{(2)}\rightarrow 2\beta_c$, see \eqref{2ABlGBEG}. In order to obtain a density that is given with respect to the Lebesgue measure 
by const.$\exp(-y^2)$, const.$\exp(-y^4)$ or const.$\exp(-y^2-y^4)$ (up to negligible terms) we note that $v=0$ is required with $v$ defined in Theorem \ref{nB}. It remains to discuss the set $C$. According to \eqref{taylorGBEGC} we have
\begin{eqnarray*}
nG_{\beta_n,K_n}(y/n^{\gamma})&=&\frac{1}{n^{2\gamma+\Delta_2-1}}\frac{kC_n^{(2)}}{2}y^2+\frac{1}{n^{4\gamma+\Delta_1-1}}\frac{bC_n^{(4)}}{24}y^4\\
&&+\frac{1}{n^{6\gamma-1}}\frac{G_{\beta_n,K_n}^{(6)}(0)}{6!}y^6+\frac{1}{n^{7\gamma-1}}C_{\beta_n,K_n}(\xi)y^7,
\end{eqnarray*}
with
$C_n^{(4)}\rightarrow \frac{9}{2}$, see \eqref{4ABlGBEG}. 
In order to obtain a density that is given with respect to the Lebesgue measure by const.$\exp(-G(y))$ (up to negligible terms), where $G(y)$ is a linear combination of the terms $y^2$, $y^4$ and $y^6$, we note that $w=0$ is required with $w$ defined in Theorem \ref{nC}. In each of these cases discussed above a measure with the density stated there has moments of any finite order.
\end{proof}

We now consider the construction of an exchangeable pair $(W_{\gamma},W_{\gamma}')$ in our model
for $W_{\gamma}= \frac{S_n}{n^{1 - \gamma}} = \frac{1}{n^{1 - \gamma}} \sum_{i=1}^n \omega_i$, proving an approximate regression property.
We produce a spin collection $\omega'= (\omega_i')_{i \geq 1}$ via
a {\it Gibbs sampling} procedure: select a  coordinate, say $i$, at random and replace $\omega_i$ by $\omega_i'$ drawn from the
conditional distribution of the $i$'th coordinate given $(\omega_j)_{j \not= i}$, independently from $\omega_i$. Let $I$ be a random variable
taking values $1, 2, \ldots, n$ with equal probability, and independent of all other random variables. Consider
$$
W_{\gamma}' := W_{\gamma} - \frac{\omega_I}{n^{1 - \gamma}} +  \frac{\omega_I'}{n^{1 - \gamma}} = \frac{1}{n^{1 - \gamma}} \sum_{j \not= I} \omega_j + \frac{\omega_I'}{n^{1 - \gamma}}.
$$
Hence $(W_{\gamma},W_{\gamma}')$ is an exchangeable pair and $W_{\gamma}-W_{\gamma}' = \frac{\omega_I - \omega_I'}{n^{1 - \gamma}}$.
For $\mathcal{F} := \sigma(\omega_1, \ldots, \omega_n)$
we obtain
\begin{equation*}
\E[W_{\gamma}-W_{\gamma}'| \mathcal{F}] = \frac{1}{n^{1 - \gamma}} \frac 1n \sum_{i=1}^n \E[ \omega_i - \omega_i' | \mathcal{F}] = \frac{1}{n} \, W_{\gamma} -
\frac{1}{n^{1 - \gamma}} \frac 1n \sum_{i=1}^n  \E[ \omega_i'|\mathcal{F}].
\end{equation*}
With Lemma \ref{BedBEG} and Lemma \ref{GzuBedBEG} we have
\begin{equation*} 
\E[W_{\gamma}-W_{\gamma}'| \mathcal{F}] = \frac{1}{n} \, W_{\gamma} - \frac{1}{n^{1 - \gamma}} \frac 1n \sum_{i=1}^n \bigl( S_n^i/n - \frac{1}{2 \beta K} G_{\beta,K}^{(1)} (S_n^i/n) \bigr) \bigl( 1 + {\mathcal O}(1/n) \bigr).
\end{equation*}
Using 
\begin{equation} \label{sumtrick}
\frac{1}{n^{1 - \gamma}} \frac 1n \sum_{i=1}^n \frac{S_n^i}{n} = \frac 1n \frac{S_n}{n^{1 - \gamma}} - \frac{1}{n^2} \frac{S_n}{n^{1 - \gamma}} = \frac 1n W_{\gamma} - \frac{1}{n^2} W_{\gamma},
\end{equation}
we obtain
\begin{equation} \label{mostim}
\E[W_{\gamma}-W_{\gamma}'| \mathcal{F}] = \frac{1}{n^{1 - \gamma}} \frac 1n \sum_{i=1}^n \frac{1}{2 \beta K} G_{\beta,K}^{(1)} (S_n^i/n)  \bigl( 1 + {\mathcal O}(1/n) \bigr) + {\mathcal O} \bigl( W_{\gamma} / n^2 \bigr).
\end{equation}
Alternatively with $f_{\beta,K}(S_n^i/n) = f_{\beta,K}(S_n/n) + f_{\beta,K}(S_n^i/n) - f_{\beta,K}(S_n/n)$ we see
\begin{equation} \label{mostim2}
\E[W_{\gamma}-W_{\gamma}'| \mathcal{F}] = \frac{1}{n^{1 - \gamma}} \frac{1}{2 \beta K} G_{\beta,K}^{(1)} (S_n/n)  
\bigl( 1 + {\mathcal O}(1/n) \bigr) + {\mathcal O} \bigl( W_{\gamma} / n^2 \bigr) + R_{\beta,K,\gamma}
\end{equation}
with
\begin{equation} \label{rbk}
R_{\beta,K,\gamma} :=  \frac{1}{n^{1 - \gamma}} \frac 1n \sum_{i=1}^n  \bigl( f_{\beta,K}(S_n^i/n) - f_{\beta,K}(S_n/n) \bigr) \bigl( 1 + {\mathcal O}(1/n) \bigr).
\end{equation}

Before proving the theorems we fix an easy but very useful bound on $R_{\beta,K,\gamma}$:

\begin{lemma} \label{rest}
There is a constant $C$ depending only on $\beta$ and $K$ such that
$$
|R_{\beta,K,\gamma} | \leq C \cdot n^{\gamma -2}.
$$
\end{lemma}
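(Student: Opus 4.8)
The plan is to bound each summand in $R_{\beta,K,\gamma}$ by observing that it compares $f_{\beta,K}$ at two points lying within distance $1/n$ of each other, and then to invoke Lipschitz continuity of $f_{\beta,K}$.

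First I would record the elementary geometric facts about the arguments. Since $|\omega_k|\le 1$ for every $k$, we have $|S_n|\le n$, so $S_n/n\in[-1,1]$; likewise $S_n^i=S_n-\omega_i$ satisfies $|S_n^i|\le n-1$, whence $S_n^i/n\in[-1,1]$ as well. Moreover $\tfrac{S_n^i}{n}-\tfrac{S_n}{n}=-\tfrac{\omega_i}{n}$, so $\bigl|\tfrac{S_n^i}{n}-\tfrac{S_n}{n}\bigr|\le\tfrac1n$. Next I would note that $f_{\beta,K}$ is Lipschitz on $[-1,1]$: the denominator $1+2e^{-\beta}\cosh(2\beta Kx)$ in \eqref{fBetaBEG} is bounded below by $1$, so $f_{\beta,K}$ is real analytic on $\R$, and in particular $L:=L(\beta,K):=\sup_{|x|\le 1}|f_{\beta,K}^{(1)}(x)|<\infty$ (alternatively one may use Lemma~\ref{GzuBedBEG} together with smoothness of $G_{\beta,K}$). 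The mean value theorem then gives, for each $i$,
$$\bigl|f_{\beta,K}(S_n^i/n)-f_{\beta,K}(S_n/n)\bigr|\le L\,\Bigl|\tfrac{S_n^i}{n}-\tfrac{S_n}{n}\Bigr|\le\frac{L}{n}.$$

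Finally I would assemble the estimate. The factor $1+\Oo(1/n)$ occurring in \eqref{rbk} originates from Lemma~\ref{BedBEG}, where it equals $e^{\pm 2\beta K/n}$; hence it is bounded, uniformly in $n\in\N$ and in the configuration, by a constant $c_0=c_0(\beta,K)$. Therefore
$$|R_{\beta,K,\gamma}|\le\frac{1}{n^{1-\gamma}}\,\frac1n\sum_{i=1}^n\bigl|f_{\beta,K}(S_n^i/n)-f_{\beta,K}(S_n/n)\bigr|\,c_0\le\frac{1}{n^{1-\gamma}}\cdot\frac{L c_0}{n}=L c_0\, n^{\gamma-2},$$
and the claim follows with $C:=L c_0$.

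There is essentially no real obstacle here: the only point deserving a word of care is that the Lipschitz constant $L$ and the $\Oo(1/n)$-factor must be controlled uniformly in $n$, which is automatic for fixed $(\beta,K)$. I would add the remark that the same argument, with $\beta$ and $K$ replaced by a positive bounded sequence $(\beta_n,K_n)$, yields the analogous bound with a constant independent of $n$, since $\sup_n\sup_{|x|\le 1}|f_{\beta_n,K_n}^{(1)}(x)|<\infty$; this is what will actually be used in the proofs of Theorems~\ref{nA}, \ref{nB} and \ref{nC}.
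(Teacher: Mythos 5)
Your proof is correct and follows essentially the same route as the paper: both arguments rest on the fact that $f_{\beta,K}$ is Lipschitz on $[-1,1]$ with a constant depending only on $(\beta,K)$, combined with $\bigl|S_n^i/n - S_n/n\bigr| = |\omega_i|/n \le 1/n$ and the uniform boundedness of the $\Oo(1/n)$ factor. The only cosmetic difference is that you obtain the Lipschitz bound from smoothness and the mean value theorem, whereas the paper derives it by hand using Lipschitz continuity of $\sinh$ and the hyperbolic addition identity.
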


\begin{proof}
The denominator of $f_{\beta,K}$ given in \eqref{fBetaBEG} is larger than 1. Hence for any $x,y \in [-1,1]$ we obtain
\begin{eqnarray*}
|f_{\beta,K}(x) - f_{\beta,K}(y)| & \leq & 2 e^{-\beta} | \sinh(2 \beta K x) - \sinh(2 \beta K y) | \\
&& + 4 e^{-2\beta} | \sinh(2 \beta K x) \cosh(2 \beta K y) - \sinh(2 \beta K y)\cosh(2 \beta K x)| \\
& \leq & c(\beta, K) |x-y| +4 e^{-2\beta} | \sinh(2 \beta K (x-y))|,
\end{eqnarray*}
using Lipschitz-continuity of $\sinh(\cdot)$ on a compact interval and the hyperbolic Pythagoras, where $c(\beta,K)$ is a constant.
It follows that
$$
|f_{\beta,K}(S_n^i/n) - f_{\beta,K}(S_n/n)| \leq \frac 1n c(\beta, K) + 4 e^{-2 \beta} \bigl( 2\beta K /n\bigr) + {\mathcal O} (n^{-3}).
$$
\end{proof}

\begin{proof}[Proof of Theorem \ref{Tfixed}]
We will only prove part (1) and (3) of the theorem. The proof of (2) follows the lines of the proof of part (3). 
In each of the cases the exchangeable pair is constructed via a {\it Gibbs sampling}, see \eqref{mostim}. We start with the proof of part (1).
In order to be able to apply Corollary \ref{corsigma} we need the linear regression condition given by \eqref{3.2}. Since $(\beta,K)\in A$ we have $\gamma=1/2$.
With the Taylor expansion of $G_{\beta,K}$ given in \eqref{taylorGBEGA} we have
$$
G_{\beta,K}^{(1)}(S_n^i/n) = \frac{S_n^i}{n}  G_{\beta,K}^{(2)} (0) + {\mathcal O} ((S_n^i/n)^2),
$$ 
and therefore applying \eqref{sumtrick} we obtain with \eqref{mostim}
$$
\E [ W_{1/2} - W_{1/2}' | \F ]  =  \frac 1n \frac{1}{2 \beta K} G_{\beta,K}^{(2)}(0)  W_{1/2} 
 + R_{1/2} =: \frac{\lambda}{\sigma^2} W_{1/2}+R_{1/2}
$$
with
$\lambda= \frac 1n$, $\sigma^2 = \frac{2\beta K}{G_{\beta,K}^{(2)}\left(0\right)}$ and
$$
R_{1/2}={\mathcal O} \bigl( \frac{1}{n \sqrt{n}} W_{1/2}^2 \bigr) + {\mathcal O} \bigl( \frac{1}{n^2} W_{1/2} \bigr).
$$
Hence we have \eqref{3.2} and can apply Corollary \ref{corsigma}. In Remark \ref{koffwahl} we will comment that $\sigma^2$ in the linear regression we found
is not automatically the variance of the limiting normal distribution. By Lemma \ref{MomBEG} we know that $\E (W_{1/2}^2)$ is bounded and therefore
$\lambda^{-1} \sqrt{\E(R_{1/2}^2)} = {\mathcal O}(n^{-1/2})$. Therefore the last three terms in \eqref{mainbound3} can be bounded by a constant (depending on $\beta$
and $K$) times $n^{-1/2}$. Next we have to consider the variance of
\begin{eqnarray}\label{Ai}\nonumber
\E\left[(W_{1/2}'-W_{1/2})^2|W_{1/2}\right]&=&\frac{1}{n^2}\sum\limits_{i=1}^n\E\left[(\omega_i'-\omega_i)^2|W_{1/2}\right]\\\nonumber
&=&\frac{1}{n^2}\sum\limits_{i=1}^n\bigl(\omega_i^2+\E\left[\omega_i'^2|W_{1/2}\right]-2\omega_i\E\left[\omega_i'|W_{1/2}\right]\bigr)\\
&=:&A_1+A_2+A_3.
\end{eqnarray}
To get an estimate of the variance of this expression we will bound the variances of the $A_i$ and start by taking a look at the variance of $A_1$.
\begin{eqnarray*}
\V\left[A_1\right]&=&\frac{1}{n^4}\sum\limits_{i=1}^n\V\left[\omega_i^2\right]+\frac{1}{n^4}\sum\limits_{1\leq i<j\leq n}\text{Cov}\left(\omega_i^2,\omega_j^2\right).
\end{eqnarray*}
Applying Lemma \ref{covest} with $\gamma=1/2$ we have $\cov(\omega_i^2,\omega_j^2)=\Oo(n^{-1})$.
This leads to the bound $\V\left[A_1\right]=\Oo\left(n^{-3}\right)$.
A conditional version of Jensen's inequality yields $\V\left[A_2\right]\leq\V\left[A_1\right]$. Thus the variance of $A_2$ has the same order as the variance of $A_1$. Furthermore Lemma \ref{BedBEG}, Lemma \ref{GzuBedBEG} and Lemma
\ref{MomBEG} yield
$$
\frac{1}{2} |A_3| =\biggl|\frac{1}{n^2}\sum\limits_{i=1}^n\omega_i\left[f_{\beta,K}(S_n^i/n) \bigl( 1 + {\mathcal O}(1/n) \bigr) \right]\biggr|
= \Oo \bigl( \frac{W_{1/2}^2}{n^2} \bigr) + \Oo\bigl( \frac{1}{n^2} \bigr).
$$
As a result of Lemma \ref{MomBEG} the variance of $A_3$ can be bounded by a constant times $n^{-4}$. Summarising these estimations the variance of $\E\bigl[(W_{1/2}'-W_{1/2})^2|W_{1/2}\bigr]$ can be bounded by $9$ times the maximum of the variances of the terms $A_1$, $A_2$ and $A_3$, which is a constant times $n^{-3}$. Thus, finally
\begin{eqnarray*}
\frac{\sigma^2}{2\lambda}\sqrt{\V\left[\E\bigl[(W_{1/2}'-W_{1/2})^2|W_{1/2}\bigr]\right]}=\Oo\left(n^{-1/2}\right),
\end{eqnarray*}
which completes the proof for the region $A$.

Next we turn to the region $C$, part (3) of Theorem \ref{Tfixed}. As has been said before, the proof for region $B$ follows the steps of the next lines except that slight changes regarding the Taylor expansion have to be made. In order to apply Theorem \ref{generaldensity2} we have to show that the linear regression condition \eqref{exchangepsi} is fulfilled. Applying the Taylor expansion of $G_{\beta_c, K_c(\beta_c)}$ in \eqref{taylorGBEGC}, with \eqref{mostim2} and \eqref{rbk} we obtain 
\begin{eqnarray*}
\E [ W_{1/6} - W_{1/6}' | \F ] & = & \frac{1}{n^{5/3}} \frac{G_{\beta_c,K_c(\beta_c)}^{(6)}(0)}{5! \, 2\beta_c K_c(\beta_c)} W_{1/6}^5 + 
{\mathcal O} \bigl( \frac{W_{1/6}^5}{n^{8/3}} + \frac{W_{1/6}^6}{n^{11/6}} \bigr) + R_{\beta_c,K_c(\beta_c), 1/6}.
\end{eqnarray*}
Thus
\begin{eqnarray*}
\E [ W_{1/6} - W_{1/6}' | \F ] & = & - \lambda \psi(W_{1/6}) + R_{1/6}
\end{eqnarray*}
with
$\lambda=n^{-5/3}$ and $\psi(x) = -\frac{G_{\beta_c,K_c(\beta_c)}^{(6)}\left(0\right)}{5! \, 2 \beta_c K_c(\beta_c)} x^5$
and $R_{1/6}= R_{\beta_c,K_c(\beta_c), 1/6} + {\mathcal O} \bigl( n^{-11/6} \bigr)$, where we have used Lemma \ref{MomBEG}.

Note that $\frac{\psi(x)}{\E \bigl( W_{1/6} \psi(W_{1/6}) \bigr)} =\frac{x^5}{\E(W_{1/6}^6)}$. Applying Theorem \ref{generaldensity2} we will compare the distribution of $W_{1/6}$ with a distribution with Lebesgue-probability density proportional to $\exp \bigl( - \frac{x^6}{6 \E(W^6)} \bigr)$. This density as well
as the density $p$ with $\psi = p'/p$ fulfill assumption (B), see \cite[Lemma 2.2]{Eichelsbacher/Loewe:2010} 
Given the linear regression condition we are able to compute the bound given in Theorem \ref{generaldensity2}. Since $|W_{1/6}-W_{1/6}'|\leq\frac{1}{n^{5/6}}$
we set $A :=\frac{1}{n^{5/6}}$ and obtain with the help of Lemma \ref{MomBEG}
\begin{eqnarray*}
\frac{1}{\lambda}\left(\frac{d_4A^3}{4}\right)+\frac{3A}{2}\E\abs{\psi(W_{1/6})}=\Oo\left(n^{-5/6}\right).
\end{eqnarray*}
The last term in \eqref{kolall2} is zero. From Lemma \ref{rest} we know that $|R_{\beta_c,K_c(\beta_c), 1/6}| = {\mathcal O}(n^{-11/6})$
and using Lemma \ref{MomBEG}, we obtain that 
\begin{eqnarray*}
\left(d_1+ d_2 \sqrt{ \E(W^2)} + \frac{3}{2}A\right)\lambda^{-1} \sqrt{\E[R_{1/6}^2]}&=&\Oo\left(n^{-1/6}\right).
\end{eqnarray*}
Additionally
\begin{eqnarray*}
\E\left[(W_{1/6}-W_{1/6}')^2|W_{1/6}\right]&=&\frac{1}{n^{2/3}}\left(A_1+A_2+A_3\right)
\end{eqnarray*}
with the $A_i$'s defined in \eqref{Ai}.
In order to be able to estimate the variance of this expression again we will bound the variances of the $A_i$'s.
With Lemma \ref{covest} for $\gamma=1/6$ (since $(\beta,K) \in C$) we have $\text{Cov}(\omega_i^2,\omega_j^2)=\Oo(n^{-2/3})$
and therefore $\V\left[A_1\right]=\Oo\left(n^{-8/3}\right)$. A conditional version of Jensen's inequality yields $\V\left[A_2\right]\leq\V\left[A_1\right]$. 
Thus $\V\left[n^{-2/3}A_1\right]=\V\left[n^{-2/3}A_2\right]=\Oo(n^{-4})$. Furthermore Lemma \ref{BedBEG}, Lemma \ref{GzuBedBEG} and Lemma \ref{MomBEG} yield
\begin{eqnarray*}
\frac{1}{2}\frac{1}{n^{2/3}} |A_3| =\biggl|\frac{1}{n^{8/3}}\sum\limits_{i=1}^n\omega_i\left[f_{\beta,K}(S_n^i/n) \bigl( 1 + {\mathcal O}(1/n) \bigr) \right]\biggr|
= \Oo \bigl( \frac{W_{1/6}^2}{n^{8/3}} \bigr) + \Oo\bigl( \frac{1}{n^2} \bigr).
\end{eqnarray*}
As a result of Lemma \ref{MomBEG} the variance of $A_3$ can be bounded by a constant times $n^{-4}$. Summarising the variance of $\E\bigl[(W_{1/6}-W_{1/6}')^2|W_{1/6}\bigr]$ can be bounded by $9$ times the maximum of the variances of the terms $\frac{1}{n^{2/3}}A_1$, $\frac{1}{n^{2/3}}A_2$ and $\frac{1}{n^{2/3}}A_3$, which is a constant times $n^{-4}$. Thus, finally
\begin{eqnarray*}
\frac{d_2}{2 \lambda}\sqrt{\V\left[\E\left[(W_{1/6}-W_{1/6}')^2|W_{1/6}\right]\right]}=\Oo\left(n^{-1/3}\right),
\end{eqnarray*}
which completes the proof for the region $C$.
\end{proof}

\begin{remark} \label{koffwahl}
Note that $\sigma^2$ in the proof of part (1) is not the limiting variance in \cite[Theorem 5.5]{CosteniucEllis:2007}. The variance is $(G_{\beta,K}^{(2)}(0))^{-1} - (2 \beta K)^{-1}$ with $G_{\beta,K}^{(2)}(0) =2\beta K ( 1 - 2 \beta K c_{\beta}^{(2)}(0))$. Interesting enough in the classical Curie-Weiss model, the
prefactor $\sigma^2$ in the regression identity coincides with the limiting variance, see the proof of Theorem 3.7 in \cite{Eichelsbacher/Loewe:2010}:
Here the limiting variance is $(1 - \beta)^{-1}$ and the prefactor is $\beta / G_{\beta}^{(2)}(0) = \beta/ (\beta (1 - \beta c_{\beta}^{(2)}(0))$. In the Curie-Weiss
model one has $c_{\beta}^{(2)}(0)=1$ and hence $\beta / G_{\beta}^{(2)}(0)= (1- \beta)^{-1}$. 
\end{remark}

\begin{proof}[Proof of Theorem \ref{nA}]
Since zero is a unique minimum for the whole set $A$ the proof requires exactly the same steps as the proof of part a) of Theorem \ref{Tfixed}.
\end{proof}

Now we turn to the theorems involving the sequence $(\beta_n,K_n)$ that converges to $(\beta,K)$.

\begin{proof}[Proof of Theorem \ref{nB}]
Our goal is to apply either Corollary \ref{corsigma} or Theorem \ref{generaldensity2}, depending on whether there is a Gaussian or a non-Gaussian limit. Given $W_{\gamma}$ again we construct a coupling $W_{\gamma}'$ via {\it Gibbs sampling} such that $(W_{\gamma}, W_{\gamma}')$ is exchangeable. This will be used in order to calculate $\lambda$ and $R$ to get the linear regression condition, which is, also due to the form of the limit density, either taken from \eqref{exchangepsi} or \eqref{3.2}. Let $\F:=\sigma(\omega_1,\ldots,\omega_n)$. We start with $\eqref{mostim2}$ and plug in the Taylor expansion $\eqref{taylorGBEGB}$ to obtain
\begin{eqnarray*}
\E [ W_{\gamma} - W_{\gamma}' | \F ] & = & \frac{1}{2\beta_n K_n}G_{\beta_n,K_n}^{(2)}\left(0\right) \frac 1n W_{\gamma}
+\frac{1}{3!2\beta_n K_n}G_{\beta_n,K_n}^{(4)}\left(0\right) \frac{1}{n^{1+2\gamma}} W_{\gamma}^3 + R_{\gamma}
\end{eqnarray*}
with 
\begin{equation} \label{rgamma}
R_{\gamma} := \Oo \bigl( \frac{W_{\gamma}}{n^2} \bigr) + \Oo \bigl( \frac{W_{\gamma}^4}{n^{1 + 3 \gamma}} \bigr) + R_{\beta_n, K_n, \gamma},
\end{equation}
where $R_{\beta_n, K_n, \gamma}$ is defined in \eqref{rbk}.
We can use \eqref{2ABlGBEG} to obtain
\begin{eqnarray} \label{dis1}
\E [ W_{\gamma} - W_{\gamma}' | \F ] & = & \frac{k}{K_c(\beta_n)} \frac{1}{n^{1+\Delta_2}} W_{\gamma}+\frac{1}{3!2\beta_n K_n} G_{\beta_n,K_n}^{(4)}\left(0\right) \frac{1}{n^{1+2\gamma}}W_{\gamma}^3 + R_{\gamma}.
\end{eqnarray}

Proof of part (1): 
Depending on the influence of regions $A$ and $B$ there are different expressions for $\lambda$ due to an application of either Theorem \ref{generaldensity2} or Corollary \ref{corsigma}. 
We note that   the choice $\Delta_2=2\gamma$ seem to be necessary to get  the expressions prior to $W_{\gamma}$ and $W_{\gamma}^3$ of the same order. 
We do not expect the choice $\gamma = 1/2$ or $\gamma = 1/6$: remember that the first summand on the right hand side of \eqref{dis1} lead to a Gaussian
limit in the case of choosing the scaling $1/n$ whereas the second summand lead to a limiting density in the case of the scaling $1/n^{5/3}$. Hence $1/n^2$ should be expected to be overdesigned whereas $5/3=1 + 2 \gamma$ gives $\gamma=1/3$, which is at least a non-classical scaling. Hence we consider $\gamma=1/4$ and $\Delta_2=1/2$
and expect an influence of both regions $A$ and $B$ (and it is known from \cite[Theorem 6.1]{CosteniucEllis:2007} that this is the right choice).
So we end up with
\begin{eqnarray*}
\E [ W_{1/4} - W_{1/4}' | \F ] & = & - \lambda \psi(W_{1/4}) + R_{1/4}
\end{eqnarray*}
with
$\lambda=\frac{1}{n^{3/2}}$ and $\psi(x)=- \frac{k}{K_c(\beta_n)} x - \frac{1}{3!2\beta_n K_n}  G_{\beta_n,K_n}^{(4)}\left(0\right)  x^3$
and $R_{1/4} = R_{\beta_n, K_n, 1/4} + \Oo (n^{-7/4})$, where we used Lemma \ref{MomBEG}.

Note that $\frac{\psi(x)}{\E \bigl( W_{1/4} \psi(W_{1/4}) \bigr)} =  \frac{c_1 x + c_2  x^3}{c_3}$ with explicit formulas for 
$c_1 = c_1(\beta_n, K_n)$, $c_2 = c_2(\beta_n, K_n)$ and $c_3= c_3(\beta_n, K_n, \E(W_{1/4}^2), \E(W_{1/4}^4))$. Applying Theorem \ref{generaldensity2} we will compare the distribution of $W_{1/4}$ with a distribution with Lebesgue-probability density proportional to $\exp \bigl( - \frac{c_1 x^2/2}{c_3}  -\frac{ c_2 x^4/4}{c_3} \bigr)$. This density as well
as the density $p$ with $\psi = p'/p$ fulfil assumption (B), see \cite[Lemma 2.2]{Eichelsbacher/Loewe:2010} 
Given the linear regression condition we are able to compute the bound given in Theorem \ref{generaldensity2}.
Since $|W_{1/4}-W_{1/4}'|\leq\frac{1}{n^{3/4}}$
we set $A :=\frac{1}{n^{3/4}}$ and obtain with the help of Lemma \ref{MomBEG}
\begin{eqnarray*}
\frac{1}{\lambda}\left(\frac{d_4A^3}{4}\right)+\frac{3A}{2}\E\abs{\psi(W_{1/4})}=\Oo\left(n^{-3/4}\right).
\end{eqnarray*}
The last term in \eqref{kolall2} is zero. From Lemma \ref{rest} we know that $|R_{\beta_n,K_n, 1/4}| = {\mathcal O}(n^{-7/4})$
and using Lemma \ref{MomBEG}, we obtain that 
\begin{eqnarray*}
\left(d_1+ d_2 \sqrt{ \E(W^2)} + \frac{3}{2}A\right)\lambda^{-1} \sqrt{\E[R_{1/4}^2]}&=&\Oo\left(n^{-1/4}\right).
\end{eqnarray*}
Additionally
\begin{eqnarray*}
\E\left[(W_{1/4}-W_{1/4}')^2|W_{1/4}\right]&=&\frac{1}{n^{1/2}}\left(A_1+A_2+A_3\right)
\end{eqnarray*}
with the $A_i$'s defined in \eqref{Ai} except that the expectation is now taken for the measure $P_{n,\beta_n,K_n}$.
In order to be able to estimate the variance of this expression again we will bound the variances of the $A_i$'s.
Keeping Remark \ref{nCov} in mind we can apply Lemma \ref{covest} for $\gamma=1/4$ and get $\text{Cov}(\omega_i^2,\omega_j^2)=\Oo(n^{-1})$
and therefore $\V\left[A_1\right]=\Oo\left(n^{-3}\right)$. A conditional version of Jensen's inequality yields $\V\left[A_2\right]\leq\V\left[A_1\right]$. 
Thus $\V\left[n^{-1/2}A_1\right]=\V\left[n^{-1/2}A_2\right]=\Oo(n^{-4})$. Furthermore Lemma \ref{BedBEG} and Lemma \ref{GzuBedBEG} yield
\begin{eqnarray*}
\frac{1}{2}\frac{1}{n^{1/2}} |A_3| =\biggl|\frac{1}{n^{5/2}}\sum\limits_{i=1}^n\omega_i\left[f_{\beta,K}(S_n^i/n) \bigl( 1 + {\mathcal O}(1/n) \bigr) \right]\biggr|
= \Oo \bigl( \frac{W_{1/4}^2}{n^{5/2}} \bigr) + \Oo\bigl( \frac{1}{n^2} \bigr).
\end{eqnarray*}
As a result of Lemma \ref{MomBEG} the variance of $A_3$ can be bounded by a constant times $n^{-4}$. Summarising the variance of $\E\bigl[(W_{1/4}-W_{1/4}')^2|W_{1/4}\bigr]$ can be bounded by $9$ times the maximum of the variances of the terms $\frac{1}{n^{1/2}}A_1$, $\frac{1}{n^{1/2}}A_2$ and $\frac{1}{n^{1/2}}A_3$, which is a constant times $n^{-4}$. Thus, finally
\begin{eqnarray*}
\frac{d_2}{2 \lambda}\sqrt{\V\left[\E\left[(W_{1/4}-W_{1/4}')^2|W_{1/4}\right]\right]}=\Oo\left(n^{-1/2}\right),
\end{eqnarray*}
which completes the proof of part (1) where both regions $A$ and $B$ influence the limit distribution.

Proof of part (2):
If the influence is from region $A$ the term involving $W_{\gamma}^3$ has to be of smaller order than the term of $W_{\gamma}$. Hence, we note that the condition $\Delta_2<2\gamma$ has to be fulfilled. In \cite[Theorem 6.1]{CosteniucEllis:2007} it is proved that the only interesting choice for $\gamma$ and
 $\Delta_2$ is to take $\Delta_2 \in (0, 1/2)$, $\gamma \in (1/4, 1/2)$ and $1 -2 \gamma = \Delta_2$. Let us discuss this case in detail.
We consider the linear regression condition 
$$
\E [ W_{\gamma}- W_{\gamma}' | \F ] = \frac{k}{K_c(\beta_n)} \frac{1}{n^{1 + \Delta_2}} W_{\gamma}+ \tilde{R}_{\gamma} 
$$
with
$$
\tilde{R}_{\gamma} = \frac{1}{3!2\beta_n K_n}G_{\beta_n,K_n}^{(4)}\left(0\right) \frac{1}{n^{1+2\gamma}} W_{\gamma}^3 + R_{\gamma}
$$
and $R_{\gamma}$ given in \eqref{rgamma}. Hence $\lambda=\frac{1}{n^{1+\Delta_2}}$ and $\psi(x) = - \frac{k}{K_c(\beta_n)} x =: -x/ \sigma^2$ and
we compare the distribution of $W_{\gamma}$ with a $N(0, \E(W_{\gamma}^2))$ distribution.
Since $|W_{\gamma}-W_{\gamma}'|\leq\frac{1}{n^{1 - \gamma}}$
we set $A :=\frac{1}{n^{1 -\gamma}}$ and obtain with the help of Lemma \ref{MomBEG} that $\sigma^2 1.5 A \sqrt{\E(W_{\gamma})} = \Oo (n^{\gamma-1})$.
With $A^3/ \lambda = n^{\Delta_2 -2 + 3 \gamma} = n^{\gamma-1}$ the second last term in \eqref{mainbound3} has the same order.
From Lemma \ref{rest} we know that $|R_{\beta_n,K_n, \gamma}| = {\mathcal O}(n^{\gamma-2})$ and therefore $\Oo(\tilde{R}_{\gamma}) =
n^{- \min(2-\gamma, 1+2 \gamma)}$,
using Lemma \ref{MomBEG}.  Summarising we have 
\begin{eqnarray*}
\lambda^{-1} \sqrt{\E[\tilde{R}_{\gamma}^2]}&=&\Oo\left(n^{-\min(\gamma, 4\gamma -1)}\right).
\end{eqnarray*}
As we can see, the third order term of the Taylor expansion of $G_{\beta_n,K_n}$ now influences the order of the remainder.
We have
\begin{eqnarray*}
\E\left[(W_{\gamma}-W_{\gamma}')^2|W_{\gamma}\right]&=&\frac{1}{n^{1 - 2 \gamma}}\left(A_1+A_2+A_3\right)
\end{eqnarray*}
with the $A_i$'s defined in \eqref{Ai}. We can apply Lemma \ref{covest} to obtain $\text{Cov}(\omega_i^2,\omega_j^2)=\Oo(n^{-\min(4 \gamma, 1)})$
and therefore as seen before $\V\left[n^{-(1 - 2\gamma)}A_1\right]=\V\left[n^{-(1- 2\gamma)}A_2\right]=\Oo(n^{- \min(4, 5 - 4 \gamma)}) = \Oo(n^{- 5 + 4 \gamma})$. 
Furthermore Lemma \ref{BedBEG} and Lemma \ref{GzuBedBEG} yield
$
\frac{1}{2}\frac{1}{n^{1-2 \gamma}} |A_3| = \Oo \bigl( \frac{W_{\gamma}^2}{n^{3-2 \gamma}} \bigr) + \Oo\bigl( \frac{1}{n^2} \bigr)$.
As a result of Lemma \ref{MomBEG} the variance of $A_3$ can be bounded by a constant times $n^{-4}$. Summarising the variance of $\E\bigl[(W_{\gamma}-W_{\gamma}')^2|W_{\gamma}\bigr]$ can be bounded by a constant times $n^{-5+4 \gamma}$. Thus, finally
\begin{eqnarray*}
\frac{\sigma^2}{2 \lambda}\sqrt{\V\left[\E\left[(W_{\gamma}-W_{\gamma}')^2|W_{\gamma}\right]\right]}=\Oo\left(n^{2 - 2\gamma -5/2 + 2\gamma}\right) = \Oo\left(n^{-1/2}\right).
\end{eqnarray*}
The case $\Delta_2 \in(0,1/2)$ corresponds to the slowest convergence of $K_n \to K_c(\beta)$ with $(\beta, K_c(\beta)) \in B$, in which only $A$
influences the form of the limiting distribution, which has a Gaussian density even though a non-classical scaling is given by $n^{1-\gamma}$. 
We obtain an additional and remarkable phenomenon: for any $\gamma \in (1/4, 1/3]$ the rate of convergence is $1 / n^{4 \gamma-1}$ 
whereas for all $\gamma \in [1/3, 1/2)$ we obtain the rate $1/n^{\gamma}$.

Proof of part (3):
Finally we consider the case which corresponds to the largest value of $\Delta_2$, namely $\Delta_2>2\gamma$. We take $\Delta_2 > 1/2$ and $\gamma =1/4$ and thus the most rapid convergence of $K_n \to K_c(\beta)$. 
Now we end up with
\begin{eqnarray*}
\E [ W_{1/4} - W_{1/4}' | \F ] & = & - \lambda \psi(W_{1/4}) + \tilde{R}_{1/4}
\end{eqnarray*}
with
$\lambda=\frac{1}{n^{3/2}}$ and $\psi(x)= - \frac{1}{3!2\beta_n K_n}  G_{\beta_n,K_n}^{(4)}\left(0\right)  x^3$
and $ \tilde{R}_{1/4} = R_{\beta_n, K_n, 1/4} + \frac{k}{K_c(\beta_n)} \frac{1}{n^{1 + \Delta_2}} W_{1/4} + \Oo (n^{-7/4})$, where we used Lemma \ref{MomBEG}.
Again $|W_{1/4}-W_{1/4}'|\leq\frac{1}{n^{3/4}}$, we set $A :=\frac{1}{n^{3/4}}$ and obtain with the help of Lemma \ref{MomBEG}, that
the first summand in \eqref{kolall2} is of order $\Oo\left(n^{-1/2}\right)$ and third term of order $\Oo\left(n^{-3/4}\right)$.
From Lemma \ref{rest} we know that $|R_{\beta_n,K_n, 1/4}| = {\mathcal O}(n^{-7/4})$. The second summand of $\tilde{R}_{1/4}$ is of order 
${\mathcal O}(n^{-(1 + \Delta_2)})$. Using Lemma \ref{MomBEG}, we obtain that 
$\left(d_1+ d_2 \sqrt{ \E(W^2)} + \frac{3}{2}A\right)\lambda^{-1} \sqrt{\E[\tilde{R}_{1/4}^2]}=\Oo\left(n^{-\min(1/4, \Delta_2-1/2)} \right)$.
This is also an interesting phase transition: for any $\Delta_2 \in (1/2, 3/4)$ we obtain a slow rate of convergence $n^{-(\Delta_2 -1/2)}$, but
when $K_n$ converges more rapid in the sense of $\Delta_2 \geq 3/4$, we obtain the rate $n^{-1/4}$. The proof via Stein's method gives the 
information that in case (3), we have to assume $\Delta_2 > 1/2$.
\end{proof}

\begin{proof}[Proof of Theorem \ref{nC}]
Again our goal is to apply either Corollary \ref{corsigma} or Theorem \ref{generaldensity2}, depending on whether there is a Gaussian or a non-Gaussian limit. Given $W_{\gamma}$ again we construct a coupling $W_{\gamma}'$ via {\it Gibbs sampling} such that $(W_{\gamma}, W_{\gamma}')$ is exchangeable. 
We start with $\eqref{mostim2}$ and plug in the Taylor expansion $\eqref{taylorGBEGC}$ to obtain
\begin{eqnarray*} \label{tayc}
\E [ W_{\gamma} - W_{\gamma}' | \F ] & = & \frac{1}{2\beta_n K_n}G_{\beta_n,K_n}^{(2)}\left(0\right) \frac 1n W_{\gamma}
+\frac{1}{3!2\beta_n K_n}G_{\beta_n,K_n}^{(4)}\left(0\right) \frac{1}{n^{1+2\gamma}} W_{\gamma}^3 \\ & + &\frac{1}{5! 2 \beta_n K_n}G_{\beta_n,K_n}^{(6)}\left(0\right) \frac{1}{n^{1+4\gamma}} W_{\gamma}^5 + R_{\gamma}
\end{eqnarray*}
with 
\begin{equation} \label{rgammaC}
R_{\gamma} := \Oo \bigl( \frac{W_{\gamma}}{n^2} \bigr) + \Oo \bigl( \frac{W_{\gamma}^6}{n^{1 + 5 \gamma}} \bigr) + R_{\beta_n, K_n, \gamma},
\end{equation}
where $R_{\beta_n, K_n, \gamma}$ is defined in \eqref{rbk}.
We can use \eqref{2ABlGBEG} and \eqref{4ABlGBEG} to obtain
\begin{eqnarray*} \label{dis1}
\E [ W_{\gamma} - W_{\gamma}' | \F ] & = & \frac{k}{K_c(\beta_n)} \frac{1}{n^{1+\Delta_2}} W_{\gamma}+\frac{b C_n^{(4)}}{3!2\beta_n K_n} 
\frac{1}{n^{1+2\gamma+ \Delta_1}}W_{\gamma}^3 \\ &+& \frac{1}{5! 2 \beta_n K_n}G_{\beta_n,K_n}^{(6)}\left(0\right) \frac{1}{n^{1+4\gamma}} W_{\gamma}^5 + R_{\gamma} \\
& =:& T_1 + T_2 + T_3 +  R_{\gamma}.
\end{eqnarray*}

Proof of part (1): we consider $\gamma=1/6$, $\Delta_1=1/3$ and $\Delta_2=2/3$ and get
\begin{eqnarray*}
\E [ W_{1/6} - W_{1/6}' | \F ] & = & - \lambda \psi(W_{1/6}) + R_{1/6}
\end{eqnarray*}
with
$\lambda=\frac{1}{n^{5/3}}$ and $\psi(x)=- \frac{k}{K_c(\beta_n)} x - \frac{b C_n^{(4)}}{3!2\beta_n K_n}  x^3 - \frac{G_{\beta_n, K_n}^{(6)}(0)}{5! 2 \beta_n K_n} x^5$ and $R_{1/6} = R_{\beta_n, K_n, 1/6} + \Oo (n^{-11/6})$, where we used Lemma \ref{MomBEG}.
Note that $\frac{\psi(x)}{\E ( W_{1/6} \psi(W_{1/6}) )} =  \frac{c_1 x + c_2  x^3 + c_3 x^5}{c_4}$ with explicit formulas for 
$c_1 = c_1(\beta_n, K_n)$, $c_2 = c_2(\beta_n, K_n)$, $c_3= c_3(\beta_n, K_n)$ and $c_4= c_4(\beta_n, K_n, \E(W_{1/6}^2), \E(W_{1/6}^4), \E(W_{1/6}^6))$. Applying Theorem \ref{generaldensity2} we will compare the distribution of $W_{1/6}$ with a distribution with Lebesgue-probability density proportional to $\exp \bigl( - \frac{c_1 x^2/2}{c_4}  -\frac{ c_2 x^4/4}{c_4} - \frac{c_3 x^6/6}{c_4} \bigr)$. This density as well
as the density $p$ with $\psi = p'/p$ fulfil assumption (B), see \cite[Lemma 2.2]{Eichelsbacher/Loewe:2010} 
Given the linear regression condition we are able to compute the bound given in Theorem \ref{generaldensity2}.
Since $|W_{1/6}-W_{1/6}'|\leq\frac{1}{n^{5/6}}$ we obtain with $A = n^{-5/6}$ and the help of Lemma \ref{MomBEG}
$\frac{1}{\lambda}\left(\frac{d_4A^3}{4}\right)+\frac{3A}{2}\E\abs{\psi(W_{1/6})}=\Oo\left(n^{-5/6}\right)$
and $\left(d_1+ d_2 \sqrt{ \E(W^2)} + \frac{3}{2}A\right)\lambda^{-1} \sqrt{\E[R_{1/6}^2]}=\Oo\left(n^{-1/6}\right)$. Exactly as in the
proof of part (3) of Theorem \ref{Tfixed} we have
\begin{eqnarray*}
\frac{d_2}{2 \lambda}\sqrt{\V\left[\E\left[(W_{1/6}-W_{1/6}')^2|W_{1/6}\right]\right]}=\Oo\left(n^{-1/3}\right),
\end{eqnarray*}
which completes the proof of part (1) where all regions $A$, $B$ and $C$ influence the limit distribution.

Proof of part (2):
We consider the linear regression condition 
$$
\E [ W_{\gamma}- W_{\gamma}' | \F ] = \frac{k}{K_c(\beta_n)} \frac{1}{n^{1 + \Delta_2}} W_{\gamma}+ \tilde{R}_{\gamma} 
$$
with $\tilde{R}_{\gamma} = T_2 + T_3 +  R_{\gamma}$, where $R_{\gamma}$ is defined in \eqref{rgammaC}.
Hence $\lambda=\frac{1}{n^{1+\Delta_2}}$ and $\psi(x) = - \frac{k}{K_c(\beta_n)} x =: -x/ \sigma^2$ and
we compare the distribution of $W_{\gamma}$ with a $N(0, \E(W_{\gamma}^2))$ distribution.
With $|W_{\gamma}-W_{\gamma}'|\leq\frac{1}{n^{1 - \gamma}}$ and $A :=\frac{1}{n^{1 -\gamma}}$ we have $\sigma^2 1.5 A \sqrt{\E(W_{\gamma})} = \Oo (n^{\gamma-1})$ and $A^3/ \lambda = n^{\gamma-1}$. From Lemma \ref{rest} we know that $|R_{\beta_n,K_n, \gamma}| = {\mathcal O}(n^{\gamma-2})$ and therefore $\Oo(\tilde{R}_{\gamma}) = n^{- \min(2-\gamma, 1+4 \gamma, 1 + 2 \gamma + \Delta_1)}$, using Lemma \ref{MomBEG}.  
With the proof of part (2) of Theorem \ref{nB} we have
\begin{eqnarray*}
\frac{\sigma^2}{2 \lambda}\sqrt{\V\left[\E\left[(W_{\gamma}-W_{\gamma}')^2|W_{\gamma}\right]\right]}= \Oo\left(n^{-1/2}\right)
\end{eqnarray*}
and therefore the leading order is given by the order of $\lambda^{-1} \sqrt{\E[\tilde{R}_{\gamma}^2]}$, which leads to the three different cases, solving
the minimization problem $\min(2-\gamma, 1+4 \gamma, 1 + 2 \gamma + \Delta_1)$ for $\gamma \in (1/4, 1/2)$ and $\Delta_1 >0$.

Proof of part (3): We have exactly the same situation as in part (2). Therefore one has to solve the minimization problem $\min(2-\gamma, 1+4 \gamma, 1 + 2 \gamma + \Delta_1)$ with $\gamma \in (1/6, 1/4]$ and $\Delta_1 > 2 \Delta_2-1 = 1 - 4 \gamma$. This leads to the four cases stated in the Theorem.

Proof of part (4): With $\gamma=1/6$, $\Delta_1>1/3$ and $\Delta_2 >2/3$ 
We consider the linear regression condition 
$$
\E [ W_{1/6}- W_{1/6}' | \F ] = \frac{1}{5! 2 \beta_n K_n}G_{\beta_n,K_n}^{(6)}\left(0\right) \frac{1}{n^{5/3}} W_{1/6}^5 + \tilde{R}_{1/6} 
$$
with $\tilde{R}_{1/6} = T_1 + T_2 +  R_{1/6}$, where $R_{1/6}$ is defined in \eqref{rgammaC}.
Hence $\lambda=\frac{1}{n^{5/3}}$ and $\psi(x) = -  \frac{1}{5! 2 \beta_n K_n}G_{\beta_n,K_n}^{(6)}\left(0\right) x^5$ and
we compare the distribution of $W_{1/6}$ with a distribution with Lebesgue-density proportional to $\exp \bigl( - \frac{x^6}{6 \E(W^6)} \bigr)$.
As in the proofs of part (2) and (3) we see that the leading order is the order of $\lambda^{-1} \sqrt{\E[\tilde{R}_{1/6}^2]}$. We now have to solve
the minimization problem $\min(11/6,  \Delta_1+ 4/3, \Delta_2 +1)$ for $\Delta_1, \Delta_2 >0$ which leads to the result stated in the Theorem.

Proof of part (5):
With $4\gamma = 1 - \Delta_1$, $\gamma  \in (1/6, 1/4)$ and $2 \Delta_2 > \Delta_1 +1$ we consider the linear regression condition 
$$
\E [ W_{\gamma}- W_{\gamma}' | \F ] =\frac{1}{3!2\beta_n K_n}G_{\beta_n,K_n}^{(4)}\left(0\right) \frac{1}{n^{1+2\gamma}} W_{\gamma}^3
+ \tilde{R}_{\gamma} 
$$
with $\tilde{R}_{\gamma} = T_1+ T_3 +  R_{\gamma}$, where $R_{\gamma}$ is defined in \eqref{rgammaC}.
Hence $\lambda=\frac{1}{n^{1+2 \gamma + \Delta_1}}$ and $\psi(x) = -  \frac{1}{3!2\beta_n K_n}G_{\beta_n,K_n}^{(4)}\left(0\right) x^3$ 
we compare the distribution of $W_{\gamma}$ with a distribution with Lebesgue-density proportional to $\exp \bigl( - \frac{x^4}{4 \E(W^4)} \bigr)$.
Again as in the proofs of part (2) and (3) we see that the leading order is the order of $\lambda^{-1} \sqrt{\E[\tilde{R}_{\gamma}^2]}$. We now have to solve
the minimization problem $\min(\gamma,  \Delta_2 -1 + 2 \gamma, 6 \gamma -1)$ for $\gamma  \in (1/6, 1/4)$.

Proofs of part (6) and (7): With $\gamma = 1/6$ and $\Delta_1=1/3, \Delta_2 >2/3$ or $\Delta_1 >1/3, \Delta_2 =2/3$ we consider the 
the linear regression condition 
$$
\E [ W_{1/6}- W_{1/6}' | \F ] =\frac{1}{3!2\beta_n K_n}G_{\beta_n,K_n}^{(4)}\left(0\right) \frac{1}{n^{5/3}} W_{1/6}^3 +\frac{1}{5! 2 \beta_n K_n}G_{\beta_n,K_n}^{(6)}\left(0\right) \frac{1}{n^{5/3}} W_{1/6}^5 + \tilde{R}_{1/6} 
$$
with $\tilde{R}_{1/6} = T_1+ R_{1/6}$ or
$$
\E [ W_{1/6}- W_{1/6}' | \F ] =  \frac{k}{K_c(\beta_n)} \frac{1}{n^{5/3}} W_{1/6}+\frac{1}{5! 2 \beta_n K_n}G_{\beta_n,K_n}^{(6)}\left(0\right) \frac{1}{n^{5/3}} W_{1/6}^5 + \tilde{R}_{1/6} 
$$
with $\tilde{R}_{1/6} = T_2 + R_{1/6}$. In the first case solve $\min(11/6,  1+ \Delta_2)$, in the second
solve $\min(11/6,  \Delta_1 + 4/3)$.

Proof of part (8): Finally we consider the linear regression identity
$$
\E [ W_{\gamma} - W_{\gamma}' | \F ] = \frac{1}{2\beta_n K_n}G_{\beta_n,K_n}^{(2)}\left(0\right) \frac 1n W_{\gamma}
+\frac{1}{3!2\beta_n K_n}G_{\beta_n,K_n}^{(4)}\left(0\right) \frac{1}{n^{1+2\gamma}} W_{\gamma}^3 + \tilde{R}_{\gamma}
$$
with $\tilde{R}_{\gamma} = T_3 + R_{\gamma}$. Solve  $\min(\gamma,  2 \gamma, 6 \gamma -1)= \min(\gamma, 6 \gamma -1)$ for $\gamma \in (1/6, 1/4)$.
\end{proof}
\addtocontents{toc}{%
  \protect\setcounter{tocdepth}{6}%
}

\newcommand{\SortNoop}[1]{}\def\cprime{$'$} \def\cprime{$'$}
  \def\polhk#1{\setbox0=\hbox{#1}{\ooalign{\hidewidth
  \lower1.5ex\hbox{`}\hidewidth\crcr\unhbox0}}}
\providecommand{\bysame}{\leavevmode\hbox to3em{\hrulefill}\thinspace}
\providecommand{\MR}{\relax\ifhmode\unskip\space\fi MR }
\providecommand{\MRhref}[2]{%
  \href{http://www.ams.org/mathscinet-getitem?mr=#1}{#2}
}
\providecommand{\href}[2]{#2}

\end{document}